\font\emailfont=cmtt10
\newcommand{\Tors}{\mathrm{Tors}}
\newtheorem{thm}{Theorem}
\newtheorem*{Adjunction Inequality}{Adjunction Inequality}
\newtheorem{theorem}{Theorem}[section]
\newtheorem{prop}[theorem]{Proposition}
\newtheorem{cor}[theorem]{Corollary}
\newtheorem{lemma}[theorem]{Lemma}
\newtheorem{question}[thm]{Question}
\newtheorem{ques}[theorem]{Question}
\theoremstyle{definition}
\newtheorem{example}[theorem]{Example}
\newtheorem{defn}[theorem]{Definition}
\newtheorem{remark}[theorem]{Remark}
\def\endproofof{\relax\ifmmode\expandafter\endproofmath\else
  \unskip\nobreak\hfil\penalty50\hskip.75em\hbox{}\nobreak\hfil\bull
  {\parfillskip=0pt \finalhyphendemerits=0 \bigbreak}\fi}
\def\endproofofmath$${\eqno\bull$$\bigbreak}
\def\endproof{\relax\ifmmode\expandafter\endproofmath\else
  \unskip\nobreak\hfil\penalty50\hskip.75em\hbox{}\nobreak\hfil\bull
  {\parfillskip=0pt \finalhyphendemerits=0 \bigbreak}\fi}
\def\endproofmath$${\eqno\bull$$\bigbreak}
\def\bull{\vbox{\hrule\hbox{\vrule\kern3pt\vbox{\kern6pt}\kern3pt\vrule}\hrule}}
\newcommand{\Q}{\mathbb{Q}}
\newcommand{\R}{\mathbb{R}}
\newcommand{\Z}{\mathbb{Z}}
\newcommand\RP[1]{{\mathbb{RP}}^{#1}}
\newcommand\SpinC{\mathrm{Spin}^c}
\newcommand{\F}{\mathbb F}
\newcommand\relspinc{\underline{\spinc}}
\newcommand\Filt{\mathcal F}
\newcommand\x{\mathbf x}
\newcommand\z{\mathbf z}
\newcommand\y{\mathbf y}
\newcommand\sP{\mathcal{P}}
\newcommand\ModSphere{\ModFlow\left({\mathbb S}\longrightarrow 
\Sym^{g-1}(\Sigma_{1})\times \Sym^2(\Sigma_{2})\right)}
\newcommand\ModSpheres\ModSphere
\newcommand\CFa{\widehat{CF}}
\newcommand\HFa{\smash{\widehat{HF}}}
\newcommand\UnparModSp{\widehat \ModSp}
\newcommand\UnparModFlow\UnparModSp
\newcommand\Mod\ModSp
\newcommand\PD{\mathrm{PD}}
\newcommand{\spinc}{\mathfrak s}
\newcommand{\spinct}{\mathfrak t}
\newcommand\ModMaps{\mathcal M}
\newcommand\ModSp\ModMaps
\newcommand\Ta{{\mathbb T}_{\alpha}}
\newcommand\Tb{{\mathbb T}_{\beta}}
\newcommand\alphas{\mbox{\boldmath$\alpha$}}
\newcommand\betas{\mbox{\boldmath$\beta$}}
\newcommand\spincrel\relspinc
\newcommand\CFK{CFK}
\newcommand\HFK{HFK}
\newcommand\CFKa{\widehat\CFK}
\newcommand\CFKinf{\CFK^{\infty}}
\newcommand\HFKa{\widehat\HFK}
\newcommand\Dual{\mathcal D}
\newcommand\Duality\Dual
\newcommand\Tor{\mathrm{Tor}}
\newcommand\Sym{\mathrm{Sym}}
\newcommand\SoneStwo{S^1\times S^2}
\newcommand{\bignatural}{\mathop{\vcenter{\hbox{\Large$\natural$}}}}
\newcommand{\rank}{\mathrm{rank}}
\newcommand\maxa{\mathfrak{a}}
\newcommand\tb{\operatorname{tb}}
\newcommand\rot{\operatorname{rot}}
\newcommand\selflink{\lk_{\Q}}
\newcommand\qzselflink{\operatorname{sl}_{\Q\slash\Z}}
\newcommand\nlink{\lk_{\Q}(K,\lambda_n)}
\newcommand\Nlink{\lk_{\Q}(K,\lambda_N)}
\newcommand\writhe{\omega}
\newcommand\len{\operatorname{len}}
\newcommand\lk{\operatorname{lk}}
\title{A 4-dimensional rational genus bound}
\author[Matthew Hedden]{Matthew Hedden}
\address{Department of
Mathematics, Michigan State University \newline
\indent{\emailfont{mhedden@math.msu.edu}}}
\author{Katherine Raoux}
\address{Department of Mathematical Sciences, University of Arkansas \newline\indent{\emailfont{kraoux@uark.edu}}}
\thanks{MH gratefully acknowledges support from  DMS-1709016 and DMS-2104664.  KR was partially supported by an AWM Mentoring Travel Grant and the Max Planck Institute for Mathematics.}
\begin{document}
\maketitle

\begin{abstract}
    We  introduce a 4-dimensional analogue of the rational Seifert genus of a knot $K\subset Y$, which we call the \emph{rational slice genus}, that measures the complexity of a  homology class in $H_2(Y\times [0,1],K;\Q)$.  Our main theorem is a lower bound for the rational slice genus of a knot in terms of its Heegaard Floer $\tau$ invariants. To prove this, we  bound  the $\tau$ invariants of any satellite link whose pattern is a closed braid  in terms of the $\tau$ invariants of the companion knot, a result which should be of independent value. Our techniques also produce rational PL slice genus bounds. 

    As applications, we use our bounds to prove that Floer simple knots have rational slice genus equal to their rational Seifert genus. We also show that there exist sequences of knots in a fixed 3-manifold whose PL slice genus is unbounded. In addition, we produce stronger bounds for the slice genus of knots relative to the rational longitude, and use these to produce a rational slice-Bennequin bound for knots in contact manifolds with non-trivial contact invariant.
\end{abstract}

\section{Introduction} 
Minimal genus problems play a central role in low-dimensional topology, exemplified in dimension 4 by the Thom and Milnor Conjectures and, in dimension 3, by the importance of Thurston's semi-norm. At their core, these problems typically ask: what is the minimum genus of an embedded, oriented surface representing a given {\em integral} class in $H_2(M;\Z)$?  For instance, while the Thurston norm is a function on $H_2(Y^3;\R)$, its linearity ensures it is determined by its values on integral classes.  In the context of knot theory, the Seifert and slice genera of a knot are the minimum genera of  embedded surfaces representing classes in $H_2(S^3,K;\Z)$ or $H_2(S^3\times[0,1],K;\Z)$, respectively, which map to the fundamental class $[K]\in H_1(K)$ under the connecting homomorphism.

We find one notable exception in \cite{Turaev-function}, where Turaev considers an analogue of the Thurston norm which applies to classes in $H_2(Y;\Q/\Z)$ that are {\em not} in the image of $H_2(Y;\Q)$. At best, such classes can only be represented by \emph{folded surfaces}. These are oriented surfaces that are smoothly embedded away from a singular set, which we can always take to be homeomorphic to a single circle. 
Thus, the singular set is a knot in the 3-manifold $Y$ which represents the (non-zero) image of the class of the folded surface under the Bockstein homomorphism: $H_2(Y;\Q\slash\Z)\to H_1(Y\;\Z)$. 

A folded surface whose singular set is a fixed knot $K$ is also known in the literature as a \emph{rational Seifert surface} for $K$. From this perspective, we consider it as a map of an oriented surface with boundary $\varphi:(F,\partial F)\to (Y,K)$ whose restriction to the interior of $F$ is an embedding and to the boundary is a covering map, see Definition \ref{defn:rationalSeifert} (cf. \cite{Calegari-Gordon,Ni-Wu}). 

These considerations lead to two minimal genus questions: 
\begin{enumerate}
\item For a fixed knot $K\subset Y$, what is its rational Seifert genus, $\lVert K\rVert_Y$?
\item What is the infimum of the rational genus over knots in a fixed homology class?
\end{enumerate} 
Here, the \emph{rational (Seifert) genus} $\lVert K\rVert_Y$ is the infimum of $-\chi(F)\slash 2p$ where $F$ is any rational Seifert surface with no sphere or disk components and $p$ is the index of the covering of $K$ on the boundary, see \cite[Definition 2.3]{Calegari-Gordon}. Calegari and Gordon compute the rational genus in terms of the Thurston norm of a class in the relative homology of the knot complement. It follows that the infimum is  achieved, takes rational values, and is determined once again by its value on a particular integral homology class. Answering the second question is equivalent to a calculation of  Turaev's complexity function for a particular homology class. 

The purpose of this article is to study a 4-dimensional analogue of the first question. Our main result is a lower bound for the \emph{rational slice genus} $\lVert K\rVert_{Y\times[0,1]}$ in terms of the breadth of the Heegaard Floer $\tau$ invariants of the knot:
\begin{restatable}{thm}{RationalGenusBound}\label{thm:RationalGenusBound}
For any rationally null-homologous knot $K$ in a 3-manifold $Y$, $$\tau_{\max}(K)-\tau_{\min}(K)\leq 2\lVert K \rVert_{Y\times [0,1]}+1.$$
\end{restatable}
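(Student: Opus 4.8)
The plan is to turn the geometric hypothesis into a statement about a satellite of $K$, feed that into the satellite $\tau$-bound advertised in the abstract, and then do some numerical bookkeeping.

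\emph{Step 1: a near-optimal rational slice surface as a closed-braid satellite.} Fix $\epsilon>0$ and choose a rational slice surface $\varphi\colon(F,\partial F)\to(Y\times[0,1],\,K\times\{1\})$ with no sphere or disk components and with $-\chi(F)/(2p)<\lVert K\rVert_{Y\times[0,1]}+\epsilon$, where $p$ is the degree of the covering $\partial F\to K$. Replacing $F$ by one of its connected components (a mediant argument shows some component has normalized complexity no larger than that of $F$), we may assume $F$ is connected, so $-\chi(F)=2g(F)+c-2$ with $c=\lvert\partial F\rvert$. Set $L:=\varphi(\partial F)\subset Y\times\{1\}\cong Y$. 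Since $\varphi$ embeds the interior of $F$ and covers $K$ on the boundary, $L$ lies in a tubular neighborhood $\nu(K)\cong S^1\times D^2$, where it is isotopic to $c$ parallel copies of the slope on $\partial\nu(K)$ that is forced on $\partial F$ by the requirement that $[F]$ be a rational relative class, namely the rational longitude of $K$. Such a multicurve is a closed braid, so $L=P(K)$ is a satellite of $K$ with closed-braid pattern $P$ of winding number $w=p$, and $F$ exhibits $L$ as bounding a connected, genus-$g(F)$ surface in $Y\times[0,1]$.

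\emph{Step 2: two inequalities.} Applying the satellite $\tau$-bound to $P(K)$ produces constants $c^{\pm}_{\mathfrak{s}}$ depending only on $P$ and on a label $\mathfrak{s}$ --- matched to a $\mathrm{spin}^c$ structure of $Y$ by the satellite construction --- with $w\,\tau_{\mathfrak{s}}(K)+c^{-}_{\mathfrak{s}}\le\tau_{\mathfrak{s}}(P(K))\le w\,\tau_{\mathfrak{s}}(K)+c^{+}_{\mathfrak{s}}$. Choosing labels $\mathfrak{s}_{\pm}$ realizing $\tau_{\max}(K)$ and $\tau_{\min}(K)$ and using $\tau_{\mathfrak{s}_{+}}(P(K))\le\tau_{\max}(P(K))$, $\tau_{\mathfrak{s}_{-}}(P(K))\ge\tau_{\min}(P(K))$, one obtains $w(\tau_{\max}(K)-\tau_{\min}(K))\le(\tau_{\max}(P(K))-\tau_{\min}(P(K)))+(c^{+}_{\mathfrak{s}_{-}}-c^{-}_{\mathfrak{s}_{+}})$. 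On the other hand, since $P(K)=\partial F$ with $F$ connected of genus $g(F)$ and $c$ boundary components, the standard cobordism/adjunction estimate for the $\tau$-invariants of links bounds the spread by $\tau_{\max}(P(K))-\tau_{\min}(P(K))\le -\chi(F)+1$. Combining the two, $w(\tau_{\max}(K)-\tau_{\min}(K))\le -\chi(F)+1+(c^{+}_{\mathfrak{s}_{-}}-c^{-}_{\mathfrak{s}_{+}})$.

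\emph{Step 3: bookkeeping.} It remains to verify that the pattern contribution satisfies $c^{+}_{\mathfrak{s}_{-}}-c^{-}_{\mathfrak{s}_{+}}\le w-1$, uniformly in $w=p$: the framing-dependent part of $c^{\pm}_{\mathfrak{s}}$ is independent of $\mathfrak{s}$ and so cancels in the difference, leaving a term of size at most $w-1$ arising from the $w$ boundary-parallel strands of $P$. Granting this, $w(\tau_{\max}(K)-\tau_{\min}(K))\le -\chi(F)+w$, hence $\tau_{\max}(K)-\tau_{\min}(K)\le -\chi(F)/p+1<2\lVert K\rVert_{Y\times[0,1]}+1+2\epsilon$. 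Letting $\epsilon\to 0$ completes the proof.

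The principal obstacle is the first inequality of Step 2 --- establishing the satellite $\tau$-bound with constants sharp enough to drive Step 3. This requires analyzing the knot Floer homology of a satellite whose pattern is an arbitrary closed braid, identifying suitable subquotient complexes with shifted tensor powers of the knot Floer complex $\CFKa(Y,K)$ of the companion, together with a careful matching of $\mathrm{spin}^c$ structures between companion and satellite; the exact correction terms in the cobordism estimate and in Step 3 must then be tracked so that the final constant is precisely $1$. By comparison, the isotopy identification of Step 1 is routine.
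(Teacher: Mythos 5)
Your overall strategy matches the paper's: convert the rational slice surface into a bound on the ordinary slice genus of a braided satellite of $K$, feed that into a satellite $\tau$-bound (what the paper proves as Theorem \ref{thm:BPBounds}), and optimize. But there are two genuine errors in the execution.

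\textbf{Step 1 misidentifies the induced satellite.} You assert that near $K$ the slice surface meets nearby copies of $Y$ in parallel copies of the rational longitude, i.e.\ a cable of $K$. This is the \emph{3-dimensional} picture (the boundary condition for a rational Seifert surface), but it fails in dimension 4: the paper proves (Proposition~\ref{prop:neighborhood}) that a rational slice surface can induce \emph{any} closed $p$-braid satellite $P_\beta(K,\lambda)$, provided only that $p$ is a multiple of the order of $[K]$. This is precisely why the 4-dimensional rational genus is generally strictly smaller than the slice genus relative to the rational longitude, a point the paper stresses (see Example~\ref{ex:SmallGenusCable}). Your last paragraph acknowledges you eventually need the arbitrary-closed-braid case, which contradicts Step 1 as written.

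\textbf{Step 3's constant is wrong when the satellite is a link.} From Theorem~\ref{thm:BPBounds}, the correction in the satellite $\tau$-bound is $(p-1)+|\beta|-1$, so $c^{+}_{\mathfrak{s}_-}-c^{-}_{\mathfrak{s}_+}=p+|\beta|-2$, not the claimed $w-1=p-1$. The ``boundary-parallel strands'' heuristic you give is not substantiated, and the discrepancy does not cancel: combined with your Step 2 estimate $\tau_{\max}(P(K))-\tau_{\min}(P(K))\le -\chi(F)+1$, you obtain $\tau_{\max}(K)-\tau_{\min}(K)\le \bigl(-\chi(F)+p+|\beta|-1\bigr)/p$, which overshoots the target $2\lVert K\rVert_{Y\times[0,1]}+1$ by $(|\beta|-1)/p>0$ whenever $\beta$ closes to a link. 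The paper avoids this by first converting $\beta$ into two \emph{knots} $\beta_1,\beta_2$ by inserting $|\beta|-1$ positive (resp.\ negative) crossings, capping the surface with $|\beta|-1$ bands so that $-\chi$ increases by $|\beta|-1$, and then applying the \emph{knot} versions of the relative adjunction inequality and Theorem~\ref{thm:BPBounds} to each. The writhes of $\beta_1$ and $\beta_2$ shift in opposite directions by $|\beta|-1$, and Lemma~\ref{lem:Boundmaxmin} is designed precisely so that these excess terms cancel, recovering the sharp constant $1$. A single use of the link bound cannot reproduce this cancellation.
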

\noindent Here, $\tau_{\max}(K)$ and $\tau_{\min}(K)$ are the maximum and minimum, respectively, of the invariants $\tau_{\alpha}(K)$ taken as $\alpha$ ranges over all nontrivial Floer classes in $\HFa(Y)$. These invariants, defined in the authors' previous work \cite{Raoux, RelativeAdjunction}, extend the Ozsv\'ath-Szab\'o concordance invariant $\tau$ \cite{Knots,RasThesis} to other 3-manifolds, and to rationally null-homologous links therein. The rational slice genus appearing in the inequality is defined using rational slice surfaces in direct analogy with their 3-dimensional counterparts:
\begin{restatable}{defn}{RationalSliceSurface}\label{defn:RationalSliceSurface}
A \emph{rational slice surface} for a knot $K\subset Y$ is a compact, oriented surface $\Sigma$ with boundary, along with a smooth map $\varphi:(\Sigma,\partial\Sigma)\to (Y\times [0,1], K\times \{1\})$ such that $\varphi\vert_{int(\Sigma)}$ is an embedding in $Y\times [0,1]\setminus K\times \{1\}$ and such that $\varphi\vert_{\partial\Sigma}$ is an oriented covering map $\partial\Sigma\to K\subset Y\times \{1\}$ of degree $p$. We use $\Sigma$ to denote the singular (folded) surface arising as the image of this map. 
The \emph{rational slice genus} is:
$$\lVert K\rVert_{Y\times [0,1]}=\inf_{\Sigma,p}\frac{\chi^-(\Sigma)}{2p}$$ where $\Sigma$ has no closed components, $\chi^-(\Sigma)=\max\{-\chi(\Sigma),0\}$ and the infimum is taken over all rational slice surfaces $\Sigma$ and $p$, 
\end{restatable}

The proof of Theorem 1 has three main steps. First, embedded on a rational slice surface in a neighborhood of the singular set $K$ is a null-homologous satellite link, $P_\beta(K)$. In contrast to the 3-dimensional setting, where this link is a \emph{specific} null-homologous cable \cite{NiThurston, Calegari-Gordon}, in dimension 4 the pattern can be any closed $p$-braid, where $p$ is the order of the cover $\partial\Sigma\to K$, see Section \ref{sec:RatSlice}. Nevertheless, since $P_\beta(K)$ is null-homologous, the main result of \cite{RelativeAdjunction} now implies the genus of the rational slice surface is bounded below by $\tau_{\alpha}(P_{\beta}(K))$. 

For the second step, we relate $\tau_{\alpha}(P_{\beta}(K))$ to $\tau_{\alpha}(K)$: 

\begin{restatable}{thm}{BPBounds}\label{thm:BPBounds} Let $K\subset Y$ be a rationally null-homologous knot and $P_{\beta}(K,\lambda)$ the satellite of $K$ whose pattern is the closure of a $p$-braid $\beta$ formed with respect to the framing $\lambda$ for $K$. Then for any $\alpha\in\HFa(Y)$ and $\Theta\in\HFa(\#^{\lvert \beta \rvert-1}S^1\times S^2)$
$$\lvert 2\tau_{\alpha\otimes \Theta}(P_{\beta}(K,\lambda))-2p\tau_{\alpha}(K)-(p-1)p\selflink(K,\lambda)-\omega(\beta)\rvert \leq (p-1)+|\beta|-1,$$
where $\lvert \beta \rvert$ is the number of components of the closure of $\beta$, $\writhe(\beta)$ its writhe, and $\selflink(K,\lambda)$ the rational linking of $K$ with $\lambda$ (see Section \ref{sec:RatLink}).
\end{restatable}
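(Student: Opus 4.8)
The plan is to separate the two halves of the inequality, reduce each to the single statement of a \emph{lower} bound for satellites whose pattern is a \emph{positive} braid, and handle the leftover cases by crossing changes and by the orientation‑reversal symmetry of the invariants $\tau_\alpha$. Concretely, fix a word for $\beta$ in the standard generators with $n_+$ positive and $n_-$ negative letters, so that $\omega(\beta)=n_+-n_-$ and the crossing number is $N=n_++n_-$; switching all negative crossings gives a positive braid $\beta_+$ with $\omega(\beta_+)=N$. Each switch is a crossing change taking place in a ball disjoint from $K$, and I would first record that such a change alters $\tau_{\alpha\otimes\Theta}(P_\bullet(K,\lambda))$ by at most $1$ — this follows either from the link‑Floer skein exact sequence exactly as in the classical case for $\tau$ in $S^3$, or from \cite{RelativeAdjunction} applied to the Euler‑characteristic $-2$ cobordism realizing a crossing change, the two satellites having equally many components. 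Iterating, $|\tau_{\alpha\otimes\Theta}(P_{\beta_+}(K,\lambda))-\tau_{\alpha\otimes\Theta}(P_{\beta}(K,\lambda))|\le n_-$, and since $\omega(\beta_+)-2n_-=\omega(\beta)$, a lower bound of the asserted shape for $P_{\beta_+}(K,\lambda)$ — in which the crossing number $N=\omega(\beta_+)$ appears — yields the lower bound of the theorem for $P_\beta(K,\lambda)$, with only $\omega(\beta)$ surviving. The upper bound of the theorem is then obtained for free from the lower bound by mirroring: $\overline{P_\beta(K,\lambda)}=P_{\overline\beta}(\overline K,\overline\lambda)\subset -Y$, with $\omega(\overline\beta)=-\omega(\beta)$ and $\selflink(\overline K,\overline\lambda)=-\selflink(K,\lambda)$, and applying the (already proved) lower bound to this companion and braid, then transporting back through the orientation‑reversal duality for $\tau_\alpha$ of \cite{RelativeAdjunction}. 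Thus everything reduces to a single statement: a lower bound $2\tau_{\alpha\otimes\Theta}(P_{\beta_+}(K,\lambda))\ge 2p\tau_\alpha(K)+p(p-1)\selflink(K,\lambda)+\omega(\beta_+)-(p-1)-(|\beta|-1)$ for positive braid patterns $\beta_+$.

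The hard input is this positive‑pattern lower bound, and I expect it to be the main obstacle. Note that it cannot come from exhibiting a rational slice surface together with \cite{RelativeAdjunction}, since that inequality only bounds $\tau_{\alpha\otimes\Theta}$ of the satellite \emph{from above} and, moreover, would bound it by $p$ times the complexity of whatever surface one chose to fill $K$, which is the wrong quantity (and the wrong direction) to see $\tau_\alpha(K)$. Instead, the positive‑braid lower bound should be contact‑geometric in nature: the Bennequin surface of $\beta_+$ fibers the complement of $\widehat{\beta_+}$ in the solid torus with monodromy a product of positive Dehn twists, so $P_{\beta_+}(K,\lambda)$ carries a ``partial open book'' that is Stein‑fillable along the braid part and has the companion's rational Seifert surface as its remaining page; one expects a relative version of the fact that $\tau$ equals the Seifert genus for fibered knots with non‑vanishing contact invariant to give $\tau(P_{\beta_+}(K,\lambda))\ge p\tau_\alpha(K)+\tfrac12\omega(\beta_+)+\cdots$, with $\selflink(K,\lambda)$ entering through the discrepancy between the chosen framing $\lambda$ and the contact framing on the braid part, and with the errors $(p-1)$ and $|\beta|-1$ coming from the $p$‑fold wrapping along $K$ and from the $|\beta|$ boundary circles. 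If a clean contact argument does not produce the term $2p\tau_\alpha(K)$ on the nose, the fallback — and what I would expect the actual proof to be — is to establish a satellite formula at the chain level, relating the (sutured, or bordered) knot Floer invariants of $P_{\beta_+}(K,\lambda)$ to a ``positively twisted'' version of those of $K$ and extracting $\tau$ from the satellite gluing formula; the positivity of the braid forces the shift in $\tau$ to be $p\tau_\alpha(K)$ up to the stated error, in the spirit of cabling computations for invariants of \cite{Knots} type.

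Two bookkeeping points would need care along the way. First, in whichever construction produces the $\selflink(K,\lambda)$ term, one must verify that its coefficient is exactly $p(p-1)=2\binom{p}{2}$ — the pairwise rational linking of the $p$ wraps of the pattern around $K$ — with the correct sign and with no additional $O(p^2)$ slack; this is the expected source of the rational‑framing contribution and is where Definition~\ref{defn:RationalSliceSurface}'s normalization by $2p$ must be tracked precisely. Second, the mirror step that converts the lower bound into the upper bound requires that the grading shift in the orientation‑reversal duality for $\tau_{\alpha\otimes\Theta}$ of the $|\beta|$‑component satellite be $p$ times the corresponding shift for the knot $K$; this should hold because the satellite ``is'' $p$ wraps of $K$ homologically, but it is the kind of identity that must be checked against the definitions in \cite{Raoux, RelativeAdjunction}. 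Modulo these, combining the positive‑pattern lower bound with the crossing‑change estimate $\omega(\beta_+)-2n_-=\omega(\beta)$ and its mirror gives both inequalities, with total error $(p-1)+(|\beta|-1)$ as claimed.
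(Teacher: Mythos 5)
Your reduction strategy is sound in outline — isolate a bound for a positive braid pattern, recover the general pattern by crossing changes, and (in your variant) recover the other inequality by mirroring rather than by running the symmetric argument with the all-negative braid $\beta_-$ as the paper does. The crossing-change step is exactly Proposition~\ref{prop:CrossingChange}, and your bookkeeping $\omega(\beta_+)-2n_-=\omega(\beta)$ is correct. So far so good.

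The genuine gap is in the "hard input," and it is partly created by a misreading of what the relative adjunction inequality can do. You argue that \cite{RelativeAdjunction} cannot supply the positive-pattern lower bound because "that inequality only bounds $\tau_{\alpha\otimes\Theta}$ of the satellite from above," and you therefore reach for a contact-geometric or bordered-Floer satellite formula. Both of these run into exactly the limitation the paper flags in the introduction: bordered satellite gluing currently requires the ambient manifold to be an $L$-space, and the contact-geometric route would require a tight (or Stein-fillable) structure on $Y$, whereas the theorem is stated for arbitrary $Y$ and arbitrary $\alpha\in\HFa(Y)$. More to the point, the objection to the adjunction route is not correct: Corollary~\ref{cor:LinkCobordisms} bounds $\tau(L_1)-\tau(L_0)$ from above for a cobordism $\Sigma\colon L_0\to L_1$, so by choosing which end is $L_0$ one gets \emph{either} a lower or an upper bound on whichever end you care about, \emph{provided you have an independent handle on the other end}. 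That independent handle is the idea you are missing. The paper never "fills $K$"; instead it builds a small band cobordism $\Sigma_+$ \emph{inside} $\nu(K)\times[0,1]$ from $P_{\beta_+}(K,\lambda)$ to a large positive cable $K_{p,pN+1}$, with $-\chi(\Sigma_+)=(p-1)(pN+1)-\len(\beta)$, and applies Corollary~\ref{cor:LinkCobordisms} to that. This converts the problem into bounding $\tau_\alpha(K_{p,pN+1})$ from below in terms of $\tau_\alpha(K)$, which is precisely what Theorem~\ref{thm:CableBounds} supplies: a direct Heegaard-Floer computation (Lemmas~\ref{lemma:AgradingOutermost}--\ref{lemma:maxa} and Theorems~\ref{thm:PosCables}, \ref{thm:NegCables}) that pins down $\tau_\alpha(K_{p,pn+1})$ up to an error of $p-1$ by analyzing the winding region of the cable Heegaard diagram. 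The $N$-dependence miraculously cancels because the $(p-1)(pN+1)$ coming from $-\chi(\Sigma_+)$ matches the $pN$-growth of $p(p-1)\lk_{\Q}(K,\lambda_N)$ in the cable formula.

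Without the cable computation of Section~3 — or something playing its role — your positive-pattern lower bound is unsupported, and neither of the two fallbacks you propose is available at the stated level of generality. Your "two bookkeeping points" (the coefficient $p(p-1)$ and the mirror grading shift) are reasonable things to check, but they are secondary: the proof stands or falls on knowing $\tau$ of $(p,pn+1)$-cables up to bounded error, and that is exactly the piece your plan leaves blank.
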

\noindent When $P_{\beta}(K)$ is a link, the invariants $\tau_{\alpha\otimes\Theta}$ are defined in terms of its knotification, which is a knot in $Y\#^{|\beta|-1}\SoneStwo$. See Definition \ref{defn:linktau}.

The bound in Theorem \ref{thm:BPBounds} may be of independent interest, since it applies to \emph{any} knot (including null-homologous knots) in \emph{any} 3-manifold and {\em any}  braided pattern -- not only those for whom the corresponding satellites are null-homologous. Techniques such as bordered Floer theory might eventually allow one to compute $\tau_{\alpha\otimes \Theta}(P_{\beta}(K,\lambda))$ explicitly for those  braids whose closures in $S^1\times D^2$ have computable filtered $A_\infty$ modules, analogous to the special case of cable knots \cite{Hom-cables} cf. \cite{CableII,LOT,Petkova,Chen}. Even for such braids, however, current bordered technology requires the underlying 3-manifold to be an $L$-space, whereas our bound applies in general.

For the final step, we observe that steps one and two give us a family of genus bounds. Indeed, we obtain a bound for every possible braided satellite that could come from a rational slice surface and for each non-zero Floer homology class. Theorem \ref{thm:RationalGenusBound} then follows by taking an infimum over this family of bounds.

\subsection{Applications} One motivation for studying rational genus stems from its relevance to the Berge Conjecture \cite[Problem 1.78]{KirbyProblems} (see also \cite{Berge}), which is equivalent to the assertion that any knot in a lens space admitting a 3-sphere surgery is {\em simple}. In \cite{BGH},  Baker, Grigsby and the first author outlined a Floer theoretic approach: first show that any knot admitting a 3-sphere surgery is {\em Floer simple}, and then show that the only Floer simple knots in lens spaces are the simple knots; that is, simple knots are characterized by their (simple) Floer homology (Floer simple means that $\rank\ \HFKa(Y,K)= \rank\ \HFa(Y)$). The first part of their approach was carried out in \cite{Hedden-Berge}, and independently by Rasmussen \cite{Rasmussen-Berge}.  Rasmussen's work moreover suggested a geometric route towards the Floer homological characterization of simple knots. He showed that knots in $L$-spaces admitting 3-sphere surgeries minimize rational genus amongst all knots in their homology class, and asked whether this minimality feature distinguishes Floer simple knots. He showed, in particular, that the Berge conjecture would be resolved by a positive answer to the following: 
\begin{question}\cite{Rasmussen-Berge}\label{ques:Unique}
Is every simple knot with $\lVert K\rVert_{L(p,q)}< 1/2$ the \emph{unique} genus minimizer in its homology class?  
\end{question}
\noindent  Progress has been made along these lines.  Ni and Wu  established that {\em all} Floer simple knots in {\em all} $L$-spaces are genus minimizers in their homology class \cite{Ni-Wu}. On the other hand, Greene and Ni gave examples of non-simple genus minimizers in lens spaces with $\lVert K\rVert_{L(p,q)}> 1/2$ \cite{Greene-Ni}. Question \ref{ques:Unique} remains unresolved, however, and motivates the general search for geometric features of Floer simple knots that could be used to characterize them. To this end, we use  Theorem \ref{thm:RationalGenusBound} to show that the rational Seifert and slice genera of Floer simple knots are equal:

\begin{restatable}{thm}{FSK}\label{thm:FSK}
If $K$ is a Floer simple knot, then $$\lVert K\rVert_Y=\lVert K\rVert_{Y\times [0,1]}=\frac{1}{2}(\tau_{\max}(K)-\tau_{\min}(K)-1).$$
\end{restatable}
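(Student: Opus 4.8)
The plan is to bracket $\|K\|_{Y\times[0,1]}$ between two quantities that coincide for Floer simple knots. The lower bracket is immediate: Theorem~\ref{thm:RationalGenusBound} rearranges to
$$\tfrac12\big(\tau_{\max}(K)-\tau_{\min}(K)-1\big)\ \le\ \|K\|_{Y\times[0,1]}$$
for every rationally null-homologous knot. The upper bracket is the general inequality $\|K\|_{Y\times[0,1]}\le\|K\|_Y$: given a Calegari--Gordon rational Seifert surface $\varphi\colon(F,\partial F)\to(Y,K)$ with no sphere or disk components and boundary covering degree $p$, composing $\varphi$ with the slice inclusion $Y=Y\times\{1\}\hookrightarrow Y\times[0,1]$ yields a rational slice surface with the same degree $p$, and as $F$ has no disk or sphere components $\chi^-(F)=-\chi(F)$, so the two complexity ratios agree; taking infima gives the inequality. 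The theorem therefore reduces to the reverse estimate
$$\|K\|_Y\ \le\ \tfrac12\big(\tau_{\max}(K)-\tau_{\min}(K)-1\big)$$
under the Floer simple hypothesis, since this forces the chain of inequalities to be a chain of equalities.

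For the reverse estimate I would use Floer simplicity twice. First, $\rk\HFKa(Y,K)=\rk\HFa(Y)$ forces the Alexander filtration spectral sequence computing $\HFa(Y)$ from $\HFKa(Y,K)$ to collapse at the first page. Hence $\HFa(Y)$ carries a filtration with associated graded $\HFKa(Y,K)$, every nonzero $\alpha\in\HFa(Y)$ has a well-defined filtration level, and $\tau_\alpha(K)$ equals that level. In particular, the classes lifting single generators of $\HFKa(Y,K)$ realize the top and the bottom Alexander grading of $\HFKa(Y,K)$, while no class can exceed the top or fall below the bottom, so $\tau_{\max}(K)$ and $\tau_{\min}(K)$ are exactly those two extreme gradings and $\tau_{\max}(K)-\tau_{\min}(K)$ is the breadth of the Alexander support of $\HFKa(Y,K)$. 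Second, because the extremal summands of $\HFKa(Y,K)$ then have rank one, the rational-genus form of the genus/norm detection theorems of Ozsv\'ath--Szab\'o and Ni applies and shows that this Alexander support detects $\|K\|_Y$; this is precisely the mechanism by which Ni--Wu show that Floer simple knots minimize the rational genus in their homology class, and in the relevant normalization it gives breadth $=2\|K\|_Y+1$. Combining the two facts yields $\|K\|_Y=\tfrac12\big(\tau_{\max}(K)-\tau_{\min}(K)-1\big)$, completing the proof.

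The step I expect to be the main obstacle is the bookkeeping in the equality breadth $=2\|K\|_Y+1$. The invariants $\tau_\alpha$ are defined using the \emph{rational} Alexander grading, which incorporates a self-linking normalization and, for a knot of order $p$ in $H_1(Y)$, is valued in $\tfrac1p\Z$, whereas $\|K\|_Y$ is normalized through the Thurston norm of a relative class in $H_2(Y\setminus K,\partial)$ in the style of Calegari--Gordon. Reconciling these conventions --- in particular verifying that the powers of $p$ cancel and that the additive correction comes out to $-1$, exactly as in the classical identity $\|K\|_{S^3}=g(K)-\tfrac12$ arising from capping off the binding of a fibration --- is the technical heart of the argument. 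A secondary point is to invoke the genus-detection input with the right generality, namely for an arbitrary ambient $Y$, arbitrary order $p$, rational coefficients, and the correct $\SpinC$ structure on $Y$, rather than only in the lens-space case; the cleanest route is to cite the version already isolated in the authors' earlier work and in \cite{Ni-Wu} and then deduce the equality from the breadth computation above.
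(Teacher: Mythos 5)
Your argument reproduces the paper's proof: squeeze $\lVert K\rVert_{Y\times[0,1]}$ between $\tfrac12(\tau_{\max}(K)-\tau_{\min}(K)-1)$ from Theorem~\ref{thm:RationalGenusBound} and $\lVert K\rVert_Y$ from Equation~\eqref{eqn:4genus3genus}, then note that Floer simplicity makes the two brackets coincide via $\tau_{\max}-\tau_{\min}=A_{\max}-A_{\min}=2\lVert K\rVert_Y+1$, forcing the chain to collapse.

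One calibration point worth flagging: the identity $A_{\max}(K)-A_{\min}(K)=2\lVert K\rVert_Y+1$ already holds for \emph{every} rationally null-homologous knot by Ni's rational genus detection (\cite[Theorem 1.2]{GenusBounds}); the paper uses this freely, e.g.\ inside the proof of Theorem~\ref{thm:PosCables}. It does not depend on rank-one extremal summands of $\HFKa(Y,K)$. The only place Floer simplicity is actually used is the collapse of the Alexander spectral sequence, which gives $\tau_{\max}-\tau_{\min}=A_{\max}-A_{\min}$. So your ``first'' invocation of Floer simplicity is the one doing the work; the ``second'' is unnecessary, and phrased as written it could mislead a reader into thinking the detection theorem is special to the Floer simple case.
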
 

 \noindent Simple knots with $3$-sphere surgeries  are  rationally fibered, by Ni \cite{NiFibered}, and  work of Plamenevskaya and the first author shows they induce contact structures with nontrivial Ozsv\'ath-Szab\'o contact invariant \cite{rationalcontact}. Hence, we might expect, in analogy with strongly quasipositive null-homologous knots in the 3-sphere, that they have rational Seifert genus equal to their rational slice genus. Theorem \ref{thm:FSK} establishes that, in fact, all Floer simple knots have this property.  Given this, it is natural to wonder whether their rational slice genus, like their rational Seifert genus, is minimal amongst all knots in their homology class.  

\begin{question}
    Do Floer simple knots minimize rational slice genus amongst all knots in their homology class? 
\end{question}

\noindent This question should be viewed from the perspective of a 4-dimensional version of Turaev's function \cite{Turaev-function}, which we introduce in Subsection \ref{sec:RatGenus}, and where we pose several related questions. \\

In another direction, one can use our results to study minimum genus problems for PL surfaces in smooth 4-manifolds with boundary.  Indeed, while our initial motivation for studying  $\tau_{\max}(K)-\tau_{\min}(K)$ was Theorem \ref{thm:RationalGenusBound} and its application to Floer simple knots, the difference function in fact bounds the much subtler  rational PL slice genus.  

\begin{restatable}{thm}{rationalPLslice}\label{cor:rationalPLslice}
     Let $\lVert K\rVert^{PL}_{Y\times [0,1]}$ denote the rational PL slice genus; i.e., the analogue of rational slice genus where the maps of surfaces to $Y\times[0,1]$ are  piecewise linear.  Then 
    $$\tau_{\max}(K)-\tau_{\min}(K)\leq 2\lVert K \rVert^{PL}_{Y\times [0,1]}+1.$$
\end{restatable}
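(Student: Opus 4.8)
The plan is to re-run the three-step proof of Theorem~\ref{thm:RationalGenusBound} with PL surfaces. Steps two and three involve only $\tau$-invariants of knots and links in $3$-manifolds (Theorem~\ref{thm:BPBounds}) and the combinatorial passage to an infimum over a family of numerical inequalities; neither sees the smooth structure, so both carry over verbatim. The only place smoothness is used is step one, where a neighborhood of the singular set of a rational slice surface carries the null-homologous braided satellite $P_\beta(K,\lambda)$ and one applies the relative adjunction inequality of \cite{RelativeAdjunction}. Thus the problem reduces to a PL version of that inequality, and the only new phenomenon is that a PL-embedded surface in a smooth $4$-manifold is smoothly embedded except at finitely many \emph{cone points}, near each of which it is the cone on a pair $(S^3,J_i)$ for a knot $J_i\subset S^3$ (the link of the singularity); such a cone is a PL disk, so it absorbs Euler characteristic, which is precisely why $\lVert\cdot\rVert^{PL}_{Y\times[0,1]}$ is subtler than $\lVert\cdot\rVert_{Y\times[0,1]}$.

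By general position the cone points of a PL rational slice surface lie in the interior and away from any collar of $K\times\{1\}$. Deleting such a collar produces a PL surface $\Sigma'$ in $Y\times[0,1]$ bounding the closed-braid satellite $P_\beta(K,\lambda)$, smooth except at cone points with links $J_1,\dots,J_n$, and with $\chi^-(\Sigma')$ related to $\chi^-(\Sigma)$, $p$, and $|\beta|$ exactly as in the smooth argument.

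Next I would prove a PL relative adjunction inequality: if a null-homologous link $L\subset Y$ bounds a PL surface $F$ in $Y\times[0,1]$ with cone points of type $J_1,\dots,J_n$, then for the relevant Floer classes $2\tau(L)\le \chi^-(F)+(\text{link correction})+2\sum_i\tau(J_i)$. To obtain this, excise a small $4$-ball around each cone point (where $F$ is the cone on $J_i$, a PL disk with boundary $J_i\subset S^3$) and run the cobordism-map argument of \cite{RelativeAdjunction} on the smooth complement, identifying the boundary obtained by resolving the cone with the $0$-surgery $S^3_0(J_i)$, through which $\tau(J_i)$ enters the composition of cobordism maps exactly as in the usual derivation of relative adjunction via $0$-surgery. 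A direct alternative is to cap each $J_i$ off inside the ball, producing an honest smooth surface with $\chi^-$ increased by $2\sum_i g_4(J_i)$, and then refine $g_4(J_i)$ to $\tau(J_i)$ using the behavior of $\tau$ under that capping. As a sanity check, when $L=J_i$ and $F$ is a single cone, both sides read $2\tau(J_i)=0+0+2\tau(J_i)$, with equality.

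Finally I would observe that the cone-point term cancels in the quantity of interest. As in the smooth proof, one bounds $\tau_{\max}(K)-\tau_{\min}(K)$ by applying the relative adjunction inequality for $P_\beta(K,\lambda)$ both in $Y\times[0,1]$ and, via orientation reversal, in $(-Y)\times[0,1]$ — legitimate by the standard relation between the $\tau$-invariants of $K\subset Y$ and $K\subset -Y$, under which $\tau_{\min}(K)$ computed in $Y$ equals $-\tau_{\max}(K)$ computed in $-Y$ — and combining with Theorem~\ref{thm:BPBounds}. The first application contributes $+2\sum_i\tau(J_i)$; in the second, orientation reversal replaces each singularity link $J_i$ by its mirror $m(J_i)$, contributing $+2\sum_i\tau(m(J_i))=-2\sum_i\tau(J_i)$ after the sign flip, since $\tau(m(J))=-\tau(J)$. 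These cancel, leaving the identical inequality $\tau_{\max}(K)-\tau_{\min}(K)\le \chi^-(\Sigma)/p+1$ that drives the smooth proof; taking the infimum over all PL rational slice surfaces $\Sigma$ and all covering orders $p$ gives the theorem. The main obstacle is establishing the PL relative adjunction inequality with cone-point contribution exactly $2\sum_i\tau(J_i)$ (rather than the cruder $2\sum_i g_4(J_i)$), which requires either a careful analysis of $\tau$ under capping a cone point or a faithful reprise of the surgery/cobordism-map argument localized near each excised ball; a minor point is the general-position statement placing the cone points off a collar of $K$, so that $P_\beta(K,\lambda)$ remains smooth.
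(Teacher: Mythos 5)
Your high-level plan is correct and matches the structure of the paper's argument: you correctly identify that only the first step (the relative adjunction step) sees the smooth structure, that the new phenomenon is cone points with links $J_i\subset S^3$, and that the $\tau(J_i)$ contributions will cancel in the final $\tau_{\max}-\tau_{\min}$ estimate (your orientation-reversal/mirror-symmetry framing of the cancellation is equivalent to the paper's observation that $\sum_i\tau(J_i)$ enters with opposite signs in the two one-sided estimates, exactly like the constant $c/p$, so the infimum argument washes it out unchanged).

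The genuine gap is at the step you flag as ``the main obstacle'': you defer the core point to an unproved PL relative adjunction inequality, and neither of the two routes you sketch is sound as stated. The capping route produces a smooth surface whose genus increase is $\sum_i g_4(J_i)$, and there is no mechanism to ``refine $g_4(J_i)$ to $\tau(J_i)$'' after the fact — the bound you would actually extract is the weaker one with $g_4$, which does not have the sign behavior needed for cancellation since $g_4$ is not odd under mirroring. The $0$-surgery route could plausibly work but is a substantial new argument that you do not carry out. The paper sidesteps both by a simpler reduction that requires no new inequality: for each cone point $x_i$ choose an arc $\gamma_i$ \emph{in the surface} from $x_i$ to the boundary component $P_{\beta_1}(K)$, and delete a tubular neighborhood of each $\gamma_i$. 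Since $\gamma_i$ passes through the singular point, removing $\nu(\gamma_i)$ excises the cone; the result is an honest smooth surface, now with boundary $P_{\beta_1}(K)\# J_1\#\cdots\# J_n$, and the Euler characteristic bookkeeping is unchanged. One then applies the already-established smooth bound \cite[Corollary 5.2]{RelativeAdjunction} to this surface, and additivity of $\tau_\alpha$ under connected sum with local knots converts $\tau_\alpha(P_{\beta_1}(K)\# J_1\#\cdots\# J_n)$ into $\tau_\alpha(P_{\beta_1}(K))+\sum_i\tau(J_i)$. This yields the $\tau(J_i)$ (not $g_4(J_i)$) contribution on the nose, and the rest of the argument proceeds exactly as in the smooth case. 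If you replace your proposed PL adjunction inequality with this tubing reduction, your proof goes through.
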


\noindent In particular, we obtain bounds on the rational PL slice genus of {\em null-homologous} knots.  Using these in conjunction with our bounds for the $\tau$ invariants of braided satellites, we can easily produce families of knots  whose rational PL slice genus, hence PL slice genus, is unbounded. For example:

\begin{restatable}{corollary}{reproof}\label{cor:reproof} Let $K$ be any $L$-space knot except the trefoil. Then for any $i$ there exists a  knot $J_i\subset S_{-1}^3(K)$ for which the genus of any properly embedded PL surface $\Sigma\subset S^3_{-1}(K)\times[0,1]$ bounded by $J_i$ must be bigger than $i$.
    
\end{restatable}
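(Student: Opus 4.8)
The plan is to take the $J_i$ to be braided satellites of the dual knot $K^{\ast}\subset S^3_{-1}(K)$ (the core of the surgery solid torus) and to obstruct small PL genus via Corollary~\ref{cor:rationalPLslice}. First, $S^3_{-1}(K)$ is an integer homology sphere, so every knot $J\subset S^3_{-1}(K)$ is null-homologous, has a well-defined Seifert framing, and has integer-valued $\tau_\alpha$. If $\Sigma$ is any properly embedded PL surface in $S^3_{-1}(K)\times[0,1]$ with $\partial\Sigma=J$ and $g(\Sigma)=g$, then the component of $\Sigma$ meeting the boundary is a PL rational slice surface for $J$ with covering degree $p=1$ in the sense of Definition~\ref{defn:RationalSliceSurface}, so $\|J\|^{PL}_{S^3_{-1}(K)\times[0,1]}\le \max\{2g-1,0\}/2$, and Corollary~\ref{cor:rationalPLslice} gives $\tau_{\max}(J)-\tau_{\min}(J)\le \max\{2g-1,0\}+1$. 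Thus, as soon as $\tau_{\max}(J)-\tau_{\min}(J)\ge 2$, we get $g(\Sigma)\ge \tfrac12\bigl(\tau_{\max}(J)-\tau_{\min}(J)\bigr)$ for \emph{every} such $\Sigma$. It therefore suffices to build, for each $i$, a knot $J_i\subset S^3_{-1}(K)$ with $\tau_{\max}(J_i)-\tau_{\min}(J_i)>2i$.

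The first step is to produce one knot of $\tau$-spread at least $2$, and here I would use $K^{\ast}$ itself. Its exterior is $S^3\setminus\nu(K)$, so $K^{\ast}$ is fibered of genus $g:=g(K)$, and $g\ge2$ exactly because $K$ is an L-space knot other than the trefoil (the trefoil being the unique genus-one L-space knot). Since $\CFKinf(K)$ is a staircase of width $2g$, the integer-surgery (mapping-cone) formula for the knot Floer homology of a dual knot identifies the Alexander filtration that $K^{\ast}$ induces on $\HFa(S^3_{-1}(K))$, and a direct computation shows $\tau_{\max}(K^{\ast})-\tau_{\min}(K^{\ast})=2g-1\ge3$. (This is precisely where $K\neq$ trefoil is used: when $g=1$ one gets spread $1$, which the amplification below cannot improve — note $S^3_{-1}(K)$ fails to be an L-space even for the trefoil, so the obstruction there is quantitative rather than qualitative.)

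Next, amplify. Fix $p\ge1$, choose a $p$-braid $\beta_p$ whose closure is a knot — e.g.\ $\beta_p=\sigma_1\sigma_2\cdots\sigma_{p-1}$ — and set $J^{(p)}:=P_{\beta_p}(K^{\ast},\lambda_0)$, the satellite of $K^{\ast}$ with pattern the closure of $\beta_p$, formed along the Seifert framing $\lambda_0$, so that $\selflink(K^{\ast},\lambda_0)=0$ and $|\beta_p|=1$. Since $\HFa(\#^{0}\SoneStwo)=\HFa(S^3)$ is one-dimensional, Theorem~\ref{thm:BPBounds} specializes to $\bigl|\,2\tau_{\alpha}(J^{(p)})-2p\,\tau_{\alpha}(K^{\ast})-\omega(\beta_p)\,\bigr|\le p-1$ for all $\alpha\in\HFa(S^3_{-1}(K))$. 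The term $\omega(\beta_p)$ is independent of $\alpha$, so evaluating at classes realizing $\tau_{\max}(K^{\ast})$ and $\tau_{\min}(K^{\ast})$ and subtracting gives
$$\tau_{\max}(J^{(p)})-\tau_{\min}(J^{(p)})\ \ge\ p\bigl(\tau_{\max}(K^{\ast})-\tau_{\min}(K^{\ast})\bigr)-(p-1)\ \ge\ p+1 .$$
Setting $J_i:=J^{(2i+1)}$ yields $\tau_{\max}(J_i)-\tau_{\min}(J_i)\ge 2i+2>2i$, and by the first paragraph every properly embedded PL surface in $S^3_{-1}(K)\times[0,1]$ bounded by $J_i$ has genus strictly greater than $i$.

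The substantive point — and the main obstacle — is the first step: establishing that the dual knot of a genus-$g$ L-space knot has $\tau$-spread $2g-1$ (for our purposes, $\ge 2$). This requires pinning down the filtered chain-homotopy type of the knot Floer complex of $K^{\ast}\subset S^3_{-1}(K)$ from the surgery formula and then reading off how $\tau_\alpha$ moves as $\alpha$ ranges over the nontrivial Floer classes of $\HFa(S^3_{-1}(K))$; it is also the only place the trefoil must be excluded. After that, the rest is bookkeeping with Theorem~\ref{thm:BPBounds} and Corollary~\ref{cor:rationalPLslice} — and one could equally amplify by iterating the satellite operation rather than raising the braid index.
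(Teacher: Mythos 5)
Your proposal is correct and follows essentially the same strategy as the paper's proof: work with the dual knot $\mu=K^{\ast}\subset S^3_{-1}(K)$, establish that its $\tau$-spread is at least two (using that $g(K)\ge2$ precisely because $K$ is an L-space knot other than the trefoil), amplify via knotted braided satellites, and conclude with Theorem~\ref{cor:rationalPLslice}. Your amplification inequality
\[
\tau_{\max}(J^{(p)})-\tau_{\min}(J^{(p)})\ge p\bigl(\tau_{\max}(K^{\ast})-\tau_{\min}(K^{\ast})\bigr)-(p-1)
\]
is exactly Proposition~\ref{prop:taudifferenceboundbraidindex}, which the paper states separately and proves from Theorem~\ref{thm:BPBounds} in precisely the way you do. The only substantive divergence is in the step you yourself flag as the main obstacle: establishing the $\tau$-spread of $\mu$. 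You propose reading it off the Hedden--Levine filtered mapping cone (and you state the exact value $2g-1$, which is indeed what that computation gives); the paper mentions this route but deliberately avoids carrying it out, instead giving a rank-based argument that yields the weaker but sufficient bound $\tau_{\max}(\mu)-\tau_{\min}(\mu)\ge 2$. Concretely: by \cite{Hedden-Berge}, $\rank\,\HFKa(S^3_{-1}(K),\mu)=\rank\,\HFa(S^3_{-1}(K))+2=4g+1$, so the spectral sequence to $\HFa(S^3_{-1}(K))$ has total differential rank one; since $\mu$ is fibered of genus $g$, $\HFKa$ is nontrivial in Alexander gradings $\pm g$ and $\pm(g-1)$, and an Euler-characteristic parity argument forces nontriviality in grading $0$; hence at least three of the five gradings $\pm g,\pm(g-1),0$ carry surviving classes, and for $g\ge2$ any three of these differ by at least two. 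If you take your route you do need to actually carry out the mapping-cone identification of $\tau_\alpha(\mu)$ as $\alpha$ ranges over $\HFa(S^3_{-1}(K))$, which is more work than the rank-counting; the paper's argument is the more economical of the two. Everything else in your write-up, including the $p=1$ reduction in the first paragraph and the Seifert-framing normalization $\selflink(K^{\ast},\lambda_0)=0$, matches the paper.
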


These applications were motivated by recent work of Hom, Stoffregen and Zhou, who produced a sequence of knots $K_n$ in homology spheres $Y_n$, the latter of which  bound  homology balls $W_n$.  Their main theorem \cite[Theorem 1.1]{hom2023plgenus} shows that the minimum genus of any embedded PL surface bounded by $K_n$ in $W_n$ is increasing (hence unbounded) in $n$.  The natural homology balls $W_n$ bounded by $Y_n$ are obtained from $S^3_{-1}(T_{2n,2n+1})\times[0,1]$, $n\ge0$ by removing a neighborhood of an arc connecting its boundary components.  Letting $K=T_{2n,2n+1}$ in Corollary \ref{cor:reproof}, we obtain the seemingly stronger result that for any {\em fixed}  $Y_n$, $n>0$, there exists a sequence of knots $\{J_i\subset Y_n\}_{i>0}$ for whom the PL slice genus in $W_n$ is unbounded.  On the other hand, we can only bound the genera of PL surfaces in the particular homology ball $W_n$ whereas \cite[Theorem 1.1]{hom2023plgenus} applies to  any homology ball bounded by $Y_n$.

We also emphasize that in the null-homologous case the difference in $\tau$ invariants provides a lower bound for the {\em rational} PL slice genus.  One might expect that for null-homologous knots the rational (PL) slice genus would coincide with the (PL) slice genus.  This is not the case, and indeed the rational slice genus can be strictly smaller than the slice genus, even for knots in the 3-sphere.  See Example \ref{ex:SmallGenusCable}.  This raises a natural question:

\begin{question}\label{question:slice}
    Are there non-slice knots in the 3-sphere with $\lVert K\rVert_{S^3\times [0,1]}=0$? Is there a sequence of non-slice knots $\{K_n\}$ with  $\underset{n\rightarrow\infty}{lim} \lVert K_n\rVert_{S^3\times [0,1]}=0 $?
\end{question}

This question could be asked for knots in any 3-manifold, but the classical setting underscores the distinction between the slice genus and its rational analogue.  The latter should be viewed as the geometric  complexity of the rational homology class in $H_2(Y\times[0,1],K;\Q)$ which maps to $[K]\in H_1(K;\Q)\cong \Q$, whereas the former is the complexity of the corresponding integral class (which exists only when $K$ is null-homologous).

One could also study the {\em topological} rational slice genus, defined with surfaces that are locally flatly embedded away from their boundary. The behavior of the topological 4-genus under satellite operations is much different than the smooth 4-genus.  For instance, \cite[Example 1.3]{MR4464463} shows that the topological 4-genus of the $(n,1)$ cable of the trefoil is $1$, from which it follows that its  topological rational slice genus is zero.  Thus the answer to Question \ref{question:slice} in the topological category is affirmative. \\

As an additional application of the methods at hand, we produce examples of \emph{deep slice knots} in the boundary of certain rational homology balls. A knot in the boundary of a 4-manifold $X$ is slice in $X$ if it bounds a properly embedded disk. Such a slice disk is considered \emph{deep} if it is not contained in a collar neighborhood of the boundary. In \cite{Klug-Ruppik}, Klug and Ruppik reformulate Kirby Problem 1.95 (attributed to Akbulut) to ask: are there smooth contractible 4-manifolds with integer homology sphere boundary and which contain deep slice knots that are null-homotopic in the boundary?

We do not address that question directly. Nevertheless, we observe the $\tau_\alpha$ invariants defined in \cite{RelativeAdjunction} are deep slice obstructions. We produce several examples of deep slice knots in rational homology spheres:
\begin{restatable}{proposition}{DeepSlice}\label{prop:DeepSlice}
The lifts of the knots $8_{20}$, $10_{129}$ and $10_{140}$ are each deep slice in the branched double cover over their respective slice disks.
\end{restatable}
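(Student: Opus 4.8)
The plan is to combine the deep-slice obstruction coming from the $\tau_\alpha$ invariants with an explicit Heegaard Floer computation in the relevant branched covers. Fix $K\in\{8_{20},10_{129},10_{140}\}$ together with a slice (in fact ribbon) disk $D\subset B^4$, and let $W:=\Sigma_2(B^4,D)$ be the double branched cover, with $Y:=\partial W=\Sigma_2(S^3,K)$. Write $\widetilde K\subset Y$ for the lift of the branch locus (an embedded copy of $K$) and $\widetilde D\subset W$ for the lift of $D$. Since $D$ is a disk, $\widetilde D$ is a properly embedded disk in $W$ with $\partial\widetilde D=\widetilde K$, so $\widetilde K$ is slice in $W$; moreover $W$ is a rational homology $4$-ball, so $Y$ is a rational homology sphere, $\widetilde K$ is rationally null-homologous in $Y$, the rational slice genus of $\widetilde K$ is defined (Definition \ref{defn:RationalSliceSurface}), and Theorem \ref{thm:RationalGenusBound} applies to it.

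Next I would record why $\tau_\alpha$ obstructs a slice disk from being shallow. If a slice disk $\Delta\subset W$ for $\widetilde K$ were contained in a collar $Y\times[0,1]\hookrightarrow W$, then $\Delta$ would be a rational slice surface for $\widetilde K$ with covering index $p=1$ and $\chi^-(\Delta)=\max\{-1,0\}=0$, so $\lVert\widetilde K\rVert_{Y\times[0,1]}=0$, and Theorem \ref{thm:RationalGenusBound} would force $\tau_{\max}(\widetilde K)-\tau_{\min}(\widetilde K)\leq 1$. Hence it suffices to prove, for each of the three knots,
$$\tau_{\max}(\widetilde K)-\tau_{\min}(\widetilde K)\geq 2 \qquad \text{in } Y=\Sigma_2(S^3,K);$$
this forces every slice disk for $\widetilde K$, in particular $\widetilde D$, to be deep.

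The remaining work — and the heart of the argument — is this inequality. For each $K$ I would pass from the ribbon presentation of $D$ to the standard $4$-dimensional handle description of $\Sigma_2(B^4,D)$, extracting an explicit surgery diagram for the pair $(Y,\widetilde K)$ (recognizing, where possible, that these $K$ are Montesinos knots, so that $Y$ is Seifert fibered and $\widetilde K$ is a fiber). From a doubly-pointed Heegaard diagram, or via the knot-surgery mapping cone applied to a surgery realization of $(Y,\widetilde K)$, one computes $\HFKa(Y,\widetilde K)$ with its $\SpinC$-splitting and absolute gradings and reads off $\tau_\alpha(\widetilde K)$ for each $\alpha\in\HFa(Y)$, exhibiting two Floer classes whose $\tau$-values differ by at least $2$; alternatively one can appeal to existing computations of these invariants. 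In the favorable case where $\widetilde K$ turns out to be Floer simple in $Y$, Theorem \ref{thm:FSK} reduces the problem to checking that the rational Seifert genus $\lVert\widetilde K\rVert_Y$ — computable from the Thurston norm of the complement following Calegari and Gordon — is at least $1/2$.

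I expect the Floer computation in the branched cover to be the main obstacle: since $\lvert H_1(Y)\rvert=\det(K)$ is not small, $\HFa(Y)$ splits over many $\SpinC$ structures, and one must pin down the $\SpinC$-labels and both the relative and absolute gradings precisely enough to certify that the $\tau_\alpha$-spread is genuinely $\geq 2$ rather than $\leq 1$; the real content is therefore in identifying $(Y,\widetilde K)$ concretely enough (e.g.\ as a regular fiber of a Seifert fibered space, a fibered knot, or the dual of a surgery) to make the computation tractable and unambiguous. A secondary, routine point to verify is the branch-locus bookkeeping: that the preimage of $K$ is a single knot rather than a link, and that it is rationally null-homologous, so that $\tau_\alpha(\widetilde K)$ and the inequality of Theorem \ref{thm:RationalGenusBound} apply as stated.
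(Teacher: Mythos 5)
Your reduction of deep-sliceness to a Heegaard Floer inequality goes through an unnecessary weakening. You route the obstruction through Theorem~\ref{thm:RationalGenusBound}: a shallow slice disk forces $\lVert\widetilde K\rVert_{Y\times[0,1]}=0$, hence $\tau_{\max}(\widetilde K)-\tau_{\min}(\widetilde K)\le 1$, and so you set yourself the task of proving $\tau_{\max}-\tau_{\min}\ge 2$. The paper instead invokes the direct relative adjunction bound \cite[Corollary~5.2]{RelativeAdjunction}, quoted immediately before the proposition: $|\tau_\alpha(\widetilde K)|\le g_{Y\times[0,1]}(\widetilde K)$ for \emph{every} nontrivial $\alpha\in\HFa(Y)$. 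A shallow slice disk gives $g_{Y\times[0,1]}(\widetilde K)=0$ and hence forces every $\tau_\alpha(\widetilde K)$ to vanish, so exhibiting a \emph{single} nonzero $\tau_\alpha$ suffices. This is strictly easier to verify than a spread of at least $2$, and the difference matters here: the computation available lands you on $\tau_{\pm1}(\widetilde K)\in\{1,-1\}$ without pinning down the signs, which is enough for the paper's obstruction but not, absent a further conjugation-symmetry argument, for yours.

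The larger gap is the computation. You plan to extract $(Y,\widetilde K)$ from surgery diagrams or a Seifert-fibered/Montesinos description and compute $\HFKa$ with its $\SpinC$-splitting and absolute gradings from scratch, and you rightly flag this as the main obstacle. The paper's proof is short precisely because none of that is needed: Levine's paper \cite{ALevine-branchedcovers} already tabulates $\HFKa(\Sigma_2(K),\widetilde K)$ for the relevant $3$-bridge knots, with $\SpinC$-decomposition and gradings recorded as Poincar\'e polynomials, and the knots $8_{20}$, $10_{129}$, $10_{140}$ are selected precisely because they are slice, on Levine's list, and Khovanov thin. Thinness guarantees $\Sigma_2(K)$ is an $L$-space, which makes the spectral sequence from $\HFKa$ to $\HFa$ transparent: for $8_{20}$, the polynomial $p_{\pm1}(q,t)=q^{7/9}(q^{-1}t^{-1}+1+qt)$ has one generator per Alexander grading, the two canceling generators must lie in adjacent Maslov degrees, and the surviving generator therefore sits in Alexander grading $\pm 1$, giving $\tau_{\pm1}(\widetilde K)\neq 0$. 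You identified the right skeleton (reduce to a $\tau$-inequality, then compute in the branched cover), but you missed both the sharper obstruction that makes the target easy and the published computation that turns the whole thing into a lookup; as written, your proposal is a plan for a harder proof rather than a proof.
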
 
\noindent To produce examples, we start with a slice knot in the 3-sphere and consider its lift in the double branch cover. We use data collected by Levine \cite{ALevine-branchedcovers} to compute certain $\tau$ invariants of the lifted knot. We do not currently know whether these examples are null-homotopic in the boundary, but expect they are not. 

\subsection{A related 4-dimensional genus problem} Wu and Yang recently studied a related notion of slice genus for rationally null-homologous knots, which one can regard as the slice genus \emph{relative to the rational longitude} \cite{Wu-Yang}.   Concretely, the quantity they measure is the minimum of $\frac{\chi^-(\Sigma)}{2p}$, where $\Sigma$ is any surface in $Y\times [0,1]$ whose boundary maps to a specific cable link $K_{mr,ms}$, where $p=mr$ is the order of $[K]$, and the relatively prime pair $(r,s)$ is determined by the linking form on $\Tors(H_1(Y))$ applied to $[K]$.  The cable knot $K_{r,s}$, embedded on the boundary of $\nu(K)$ is often called the {\em rational longitude}.  Thus one can interpret their minimal genus problem for rationally null-homologous knots as the traditional slice genus problem for the cable link $K_{mr,ms}$.  

In contrast, the rational slice genus we study is, ultimately, an infimum taken over a doubly infinite collection of minimal genus problems:  we consider the ordinary slice genus of all null-homologous braided satellites of $K$, taking the infimum over all braids $\beta\in B_n$ in all braid groups whose indices $n$ are multiples of the order of $[K]$:

$$\lVert K\rVert_{Y\times [0,1]}=\inf_{n\ge 1}\left(\inf_{\beta\in B_n}\left(\min_{\Sigma}\bigg\{\frac{\chi^-(\Sigma)}{2n}\ \bigg|\ \Sigma\hookrightarrow Y\times[0,1]\ \mathrm{s.t.} \ \partial\Sigma= P_\beta(K)\bigg\}\right)\right). $$

\noindent The  slice genus relative to the rational longitude considered in \cite{Wu-Yang} corresponds to the case that $n=mr$ equals the order of $[K]\in H_1(Y)$ and $\beta\in B_{mr}$ is the standard braid presentation of the $(mr,ms)$ torus link.   Thus the rational slice genus is clearly bounded above by the slice genus relative to the rational longitude.   

Our techniques readily show that the slice genus relative to the rational longitude is bounded below by twice the value of {\em any} of the $\tau_\alpha$  invariants, see Corollary \ref{cor:RatLongframed}.  Since the difference $\tau_{\max}-\tau_{\min}$ is typically smaller than $2\tau_{\max}$, we expect that the rational slice genus will usually be strictly smaller than the slice genus relative to the rational longitude.  Example \ref{ex:SmallGenusCable} provides a specific instance where this happens.  It would be interesting to better understand the class of knots for which the two notions agree (note that Floer simple knots are among them, according to Theorem \ref{thm:FSK}).  We can also use our bound for the slice genus relative to the rational longitude  to prove a slice-Bennequin inequality for rationally null-homologous knots:

 \begin{restatable}{thm}{rationalslicebennequin}\label{thm:rationalslicebennequin}
Let $(Y,\xi)$ be a contact 3-manifold with non-trivial Ozsv\'ath-Szab\'o contact class. If $\mathcal{K}$ is a rationally nullhomologous Legendrian knot, then \[\tb_{\Q}(\mathcal{K})+\rot_{\Q, [\Sigma]}(\mathcal{K}) \leq -\frac{1}{p}\chi(\Sigma)\] where $\Sigma\hookrightarrow Y\times[0,1]$ is a rational slice surface of degree $p$ over $\mathcal{K}$ whose induced satellite is a multiple of the rational longitude.
\end{restatable}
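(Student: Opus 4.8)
The statement is a slice–Bennequin inequality for rationally null-homologous Legendrian knots, and the natural route is to reduce it to the $\tau_\alpha$ bound for the slice genus relative to the rational longitude (Corollary~\ref{cor:RatLongframed}), following the classical template: a Legendrian knot $\mathcal K$ admits a transverse pushoff, and the relevant satellite of $\mathcal K$ sitting on the boundary of a Seifert-framed solid torus neighborhood realizes, on the one hand, a closed braid (so the machinery of Theorem~\ref{thm:BPBounds} and the null-homologous satellite picture applies), and on the other hand has a classical-invariant bookkeeping in terms of $\tb_\Q$ and $\rot_{\Q,[\Sigma]}$. First I would set up the correct framed satellite: write $\lambda$ for the contact (Thurston–Bennequin) framing of $\mathcal K$ and take the multiple of the rational longitude induced by the hypothesized rational slice surface $\Sigma$ of degree $p$; expressed with respect to $\lambda$, this satellite is the closure of a braid $\beta$ whose writhe and component count encode $\tb_\Q$ and $\rot_{\Q,[\Sigma]}$. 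The content of the reduction is the identity translating $\omega(\beta)$, $p\,\selflink(K,\lambda)$, and $|\beta|$ in Theorem~\ref{thm:BPBounds} into the contact-geometric quantities — this is the rational analogue of the fact that for a Legendrian knot in $S^3$ the self-linking of its transverse pushoff is $\tb-\rot$.

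**Key steps, in order.** (1) Fix the neighborhood $\nu(\mathcal K)$ and identify the induced satellite $P_\beta(K)$ of the rational slice surface $\Sigma$ with a braided pattern with respect to the contact framing $\lambda$; record that $\omega(\beta) - (p-1)$ and the relevant $\selflink$ terms combine, using the definitions of $\tb_\Q$ and $\rot_{\Q,[\Sigma]}$ in the contact setting, to give the transverse self-linking number $\selflink_\Q$ of the satellite, up to the explicit error terms appearing in Theorem~\ref{thm:BPBounds}. (2) Apply the nontriviality of the Ozsv\'ath–Szab\'o contact invariant $c(\xi)\in\HFa(-Y)$: this is exactly the input that makes $\tau_{c(\xi)}$ (or the corresponding $\tau_\alpha$ with $\alpha=c(\xi)$) an effective genus bound, via the relative adjunction inequality of \cite{RelativeAdjunction} and Corollary~\ref{cor:RatLongframed}, which says the slice genus relative to the rational longitude is bounded below by twice this $\tau_\alpha$. (3) Combine (1) and (2): feed the Legendrian satellite into Corollary~\ref{cor:RatLongframed}, estimate $\tau_{\alpha\otimes\Theta}$ of the satellite from below by the transverse self-linking datum using step (1) together with Theorem~\ref{thm:BPBounds}, and rearrange to isolate $\tb_\Q(\mathcal K)+\rot_{\Q,[\Sigma]}(\mathcal K)$ on the left and $-\tfrac1p\chi(\Sigma)$ on the right. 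The error terms $(p-1)+|\beta|-1$ from Theorem~\ref{thm:BPBounds} must be checked to cancel against the $+1$-type slack in the genus bound and the stabilization/destabilization bookkeeping; the classical slice–Bennequin proof has exactly this structure, where the "$-1$" improvement comes from passing to the transverse pushoff, so the analogous cancellation should go through with the rational normalizations.

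**Main obstacle.** The hard part will be step (1): getting the contact-geometric bookkeeping exactly right in the rational setting. In the integral case the equation $\selflink = \tb - \rot$ for the transverse pushoff is standard, but here both $\tb_\Q$ and $\rot_{\Q,[\Sigma]}$ are defined via the rational Seifert/slice surface $\Sigma$ and involve the rational linking form, so one must verify that the writhe $\omega(\beta)$ of the braided pattern with respect to the contact framing, the term $(p-1)p\,\selflink(K,\lambda)$, and the component correction $|\beta|-1$ in Theorem~\ref{thm:BPBounds} assemble to precisely $p\cdot(\tb_\Q+\rot_{\Q,[\Sigma]})$ plus an error that the inequality can absorb. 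This is a bounded-but-fiddly normalization computation; once it is pinned down, the remainder of the argument is a direct application of the already-established $\tau_\alpha$ genus bound together with the non-vanishing of the contact invariant, exactly as in the classical Plamenevskaya-style slice–Bennequin argument for null-homologous knots.
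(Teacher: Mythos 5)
Your route is genuinely different from the paper's, and it has a real gap that your own description flags but does not resolve. The paper's proof does not touch the braided-satellite machinery at all: it takes as a black box the already-established three-dimensional rational Bennequin inequality of Li and Wu,
\[
\tb_\Q(\mathcal K) + \rot_{\Q,[F]}(\mathcal K) \le 2\tau_\xi(Y,K) - 1,
\]
then uses $\tau_\xi(Y,K) = -\tau_{c(\xi)}(-Y,K)$ together with Corollary~\ref{cor:RatLongframed} (applied to $-Y$) to bound $-2\tau_{c(\xi)}(-Y,K) - 1 \le -\tfrac1p\chi(\Sigma)$, and finally observes that $\rot_{\Q,[\Sigma]} = \rot_{\Q,[F]}$ when $\Sigma$ is Seifert framed. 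Three lines, no bookkeeping of writhes, $\selflink$ terms, or component counts.

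The gap in your plan is step (1). Theorem~\ref{thm:BPBounds} bounds $\tau_{\alpha\otimes\Theta}(P_\beta(K,\lambda))$ against $\tau_\alpha(K)$, not against any contact-geometric quantity of $\mathcal K$. To convert this into a statement about $\tb_\Q(\mathcal K) + \rot_{\Q,[\Sigma]}(\mathcal K)$ you would still need an inequality relating $\tau_{c(\xi)}(K)$ (or the transverse self-linking of the braided satellite) to the classical Legendrian data, and that is precisely the content of the Li--Wu theorem (equivalently, a Plamenevskaya-type transverse bound valid for $Y\neq S^3$ with nonvanishing $c(\xi)$). Your proposal neither cites such a result nor proves it; the "bounded-but-fiddly normalization" you defer is actually where the entire contact-geometric input lives, and without it the argument does not close. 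Put differently, the satellite machinery alone is purely topological and cannot see the contact structure; at some point one must invoke a Bennequin-type inequality, and the paper does this cleanly at the three-dimensional level rather than trying to smuggle it through the four-dimensional genus bound.
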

\noindent Here $\tb_{\Q}(\mathcal{K})$ and $\rot_{\Q,[\Sigma]}(\mathcal{K})$ are rational versions of the Thurston Bennequin and rotation numbers defined by Baker and Etnyre in \cite{Baker-Etnyre}. The boundary condition is necessary for $\tb_{\Q}(\mathcal{K})$ to be well-defined, see Section \ref{sec:sliceTB}.

\subsection{Outline} The paper is structured as follows. Section 2 contains background material on rational linking and rational genus. It also contains a brief overview of knot Floer homology and its $\tau$ invariants, and connects these concepts to rational linking numbers. In Section 3, we compute bounds for the $\tau$ invariants of $(p,pn+1)$ cables of rationally null-homologous knots. Section 4 contains a generalization of the relative adjunction inequality of \cite{RelativeAdjunction} to link cobordisms and the proof of Theorem \ref{thm:BPBounds}. We prove Theorem \ref{thm:RationalGenusBound} in Section 5. Finally, Section 6 contains applications including the proofs of Theorem \ref{thm:FSK} and Corollary \ref{cor:reproof}.

\subsection{Acknowledgments} The authors thank Chuck Livingston, Quiyu Ren, Daniel Ruberman, Zhongtao Wu and Jingling Yang for helpful discussions, IMSA/University of Miami for the workshop ``Gauge Theory and Low Dimensional Topology" it held  in April 2023, and Jen Hom for her talk at the workshop, which inspired Corollary \ref{cor:reproof}.

\section{Background}

In this section, we summarize facts about rationally null-homologous knots, rational Seifert surfaces and $\tau$ invariants. 

\subsection{Rational linking numbers} \label{sec:RatLink}
A pair of disjoint rationally null-homologous knots $K_1$ and $K_2$ in a 3-manifold $Y$ have a well-defined rational valued linking number, defined by considering the homology class of one in the complement of the other. First observe that the meridian of $K_1$ bounds a disk $D_\mu$ in its tubular neighborhood. We have the following portion from the long exact sequence of the pair $(Y, Y-\nu(K_1))$:
$$0\to \Q\langle D_\mu\rangle \xrightarrow{\partial} H_1(Y-\nu(K_1);\Q)\to H_1(Y;\Q)\to 0,$$ where we use excision to identify $H_2(Y, Y-\nu(K_1);\Q)$ with $\Q\langle D_\mu\rangle$, and the fact that $K_1$ is rationally null-homologous to show that the map  $H_2(Y)\rightarrow H_2(Y,Y-\nu(K_1))$ is rationally trivial.  It follows that $$H_1(Y-\nu(K_1);\Q)\cong H_1(Y;\Q)\oplus \Q\langle \mu \rangle.$$ 
Since $K_2$ is also rationally null-homologous in $Y$, its class maps to a rational  multiple of the meridian under the splitting above. Thus, $[K_2]=\frac{r}{d}[\mu]$ for some rational number $\frac{r}{d}$. Define $$\lk_{\Q}(K_1,K_2):= \frac{r}{d}.$$
See also \cite[Exercise 4.5.12(c)]{Gompf-Stipsicz}

\subsection{Rational Seifert surfaces and linking}
Every rationally null-homologous knot bounds a singular surface called a rational Seifert surface, whose definition we  recall from \cite{Calegari-Gordon}, cf. \cite{Baker-Etnyre}.

\begin{defn}\label{defn:rationalSeifert}
A \emph{rational Seifert surface} for an oriented knot $K$ in a 3-manifold $Y$ of order $p$ is a compact, oriented surface $F$ with boundary, and a map $\varphi:(F,\partial F)\to (Y,K)$ such that $\varphi\vert_{int(F)}$ is an embedding in $Y\setminus K$ and $\varphi|_{\partial F}$ is an oriented covering map of degree $p$ over $K$. We use $F$ to denote the singular (folded)  surface arising as the image of this defining map.
\end{defn}

It is useful for calculations to reinterpret the rational linking number of a pair of rationally null-homologous knots as the intersection number of one knot with a rational Seifert surface for the other. To make this precise, let $c$ be a rational 2-chain with boundary $K_1$. Then $$\lk_\Q(K_1,K_2)=c\cdot K_2.$$ This is well-defined, since if $c'$ is a different 2-chain with boundary $K_1$, the difference $c-c'$ is a cycle, and therefore a well-defined class in $H_2(Y;\Q)$. At the same time, $[K_2]=0$ in $H_1(Y;\Q)$ thus the intersection $(c-c')\cdot K_2=0$. 

A rational Seifert surface $F_1$ for the knot $K_1$ gives us an explicit rational 2-chain $c=\frac{1}{p}F_1$ for the present purpose.  We then have $$\lk_\Q(K_1,K_2)=\frac{1}{p}F_1\cdot K_2.$$ 

\begin{prop}\label{prop:RatSS}
Let $F$ be a rational Seifert surface for an oriented knot $K$ in $Y$ that intersects the boundary of a tubular neighborhood of $K$ in a curve homologous to $p\lambda+r\mu$ for some choice of framing $\lambda$ for $K$. Then 
\begin{enumerate}
\item $\lk_\Q(K,\lambda)= -\frac{r}{p}$
\item $\lk_{\Q}(K,\lambda+n\mu)=\lk_{\Q}(K,\lambda)+n.$
\end{enumerate}
\end{prop}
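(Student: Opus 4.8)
The plan is to extract both statements from a single homological computation: intersecting the rational Seifert surface $F$ with appropriately chosen $2$-chains and curves on $\partial\nu(K)$. First I would fix the standard picture. Let $N=\nu(K)$ be a tubular neighborhood, with meridian $\mu$ bounding the meridian disk $D_\mu\subset N$, and $\lambda$ the chosen framing, so that $\partial N$ is a torus with $\{\mu,\lambda\}$ a basis for $H_1(\partial N;\Q)$. By hypothesis $F$ meets $\partial N$ in a $1$-manifold representing the class $p\lambda + r\mu$. The surface $F$ decomposes as $F = F' \cup (F\cap N)$, where $F' = F\cap (Y\setminus \interior N)$ is a properly embedded rational Seifert surface for $K$ in the knot exterior with $\partial F' = p\lambda + r\mu$, and $F\cap N$ is (up to isotopy) a collection of meridian disks and a collar, whose precise form will not matter for the linking computation below.

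For part (1), I would use the characterization of the rational linking number recalled just before the Proposition, namely $\lk_\Q(K,\lambda) = \frac{1}{p}\, F\cdot \lambda$, where $\lambda$ is pushed off to a parallel copy of $K$ on $\partial N$ using the framing $\lambda$ (this copy is indeed rationally null-homologous, being a parallel copy of $K$, so the rational linking number is defined). The intersection $F\cdot\lambda$ is computed entirely on the torus $\partial N$: the curve $\lambda$ lives on $\partial N$, and it meets $F$ in the intersection points of $\lambda$ with the curve $F\cap\partial N = p\lambda + r\mu$. The algebraic intersection number on the torus of the class $\lambda$ with $p\lambda+r\mu$ is $r\,(\lambda\cdot\mu) = -r$ with the orientation convention $\mu\cdot\lambda = +1$ (equivalently $\lambda\cdot\mu=-1$); one must be careful that the push-off of $\lambda$ is taken \emph{off} of $F$ and with the correct co-orientation so that the sign works out. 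Hence $F\cdot\lambda = -r$ and $\lk_\Q(K,\lambda) = -r/p$, as claimed. The only subtlety is bookkeeping of orientations and of which side $\lambda$ is pushed to; this is the step I expect to require the most care, though it is not deep.

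For part (2), I would argue that changing the framing from $\lambda$ to $\lambda+n\mu$ changes the relevant push-off of $K$ by adding $n$ meridians, and $\lk_\Q(K,\mu)=1$ by construction (the meridian bounds the disk $D_\mu$, which meets $K$ once). More precisely, $\lambda + n\mu$ is, as a curve on $\partial N$, homologous on the torus to $\lambda$ plus $n$ copies of $\mu$; bounding the $n$ meridians by $n$ parallel meridian disks $D_\mu$ (each contributing $D_\mu\cdot K = 1$) and using the previous item for the $\lambda$-part gives $\lk_\Q(K,\lambda+n\mu) = \lk_\Q(K,\lambda) + n$. Alternatively and more cleanly, one can simply cite the additivity of $\lk_\Q(K,-)$ under the $H_1(\partial N)$-action together with $\lk_\Q(K,\mu)=1$; either route is short once part (1) is in hand. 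I would present part (2) as a one-line corollary of part (1) plus this additivity, so the entire content of the Proposition really reduces to the oriented intersection count on the torus $\partial N$ in part (1).
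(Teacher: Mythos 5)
Your proof is correct and follows essentially the same route as the paper's: both compute $\lk_\Q(K,\lambda)$ and $\lk_\Q(K,\lambda+n\mu)$ as algebraic intersection numbers of the framing curve with $\partial F = p\lambda+r\mu$ on the torus $\partial\nu(K)$, the only wrinkle being the orientation convention (the paper fixes $\lambda\cdot\mu=+1$ via the oriented basis $(\lambda,\mu,\eta)$ with $\eta$ the outward normal of the knot exterior, whereas you take $\mu\cdot\lambda=+1$, but you also swap the order of intersection, so the answers agree). The only cosmetic divergence is in part (2), where the paper directly computes $\frac{1}{p}(p\lambda+r\mu)\cdot(\lambda+n\mu)$ rather than appealing to additivity of $\lk_\Q$ together with $\lk_\Q(K,\mu)=1$, but these are equivalent.
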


\begin{proof}
 Consider a tubular neighborhood of $K$ in $Y$. Since $K$ is oriented, $\lambda$ and $\mu$ inherit an orientation from $K$. An oriented basis for the tangent bundle to $Y-\nu(K)$ along its boundary is $(\lambda,\mu,\eta)$ where $\eta$ is the outward pointing normal vector, and we conflate the curves $\lambda$ and $\mu$ with their tangent vectors. With this choice of orientation, we have $\lambda\cdot \mu=1$ and $\lambda\cdot\mu=-\mu\cdot\lambda$ in $\partial\nu(K)$. Thus, $\frac{1}{p}(p\lambda+r\mu)\cdot \lambda=-\frac{r}{p}$. 
 
 Similarly $\lk_{\Q}(K,\lambda+n\mu) =\frac{1}{p} (p\lambda+r\mu)\cdot (\lambda+n\mu)=\frac{pn}{p}\lambda\cdot \mu+\frac{r}{p}\mu\cdot \lambda=\frac{pn-r}{p}$.
\end{proof}

The rational linking number descends modulo $\Z$ to the well-known $\Q\slash\Z$-valued linking pairing on  torsion homology classes in $H_1(Y;\Z)$:
$$\lk_{\Q\slash\Z}:\Tor (H_1(Y;\Z))\times\Tor(H_1(Y;\Z))\to \Q\slash\Z$$ where $\lk_{\Q\slash\Z}([K_1],[K_2]):= \lk_{\Q}(K_1,K_2)\mod\Z$. Choosing different representatives of $[K_i]$ (such as different framings of $K_i$) may change $\lk_{\Q}$ by an integer, therefore the linking pairing is only well-defined modulo $\Z$. From here, we define the $\Q\slash\Z$ self-linking number of a knot:

\begin{defn}
The \emph{$\Q\slash\Z$ self-linking number} of a rationally nullhomologous knot $K$ in $Y$ is $$\qzselflink(K):=\lk_{\Q\slash\Z}([K],[K])=\lk_{\Q}(K,\lambda)\mod \Z$$ where $\lambda$ is any choice of framing for $K$.
\end{defn}
The above discussion shows the $\Q\slash\Z$ self-linking number is not only a knot invariant, but an invariant of the underlying \emph{homology} class of $K$. Consequently, any pair of cobordant knots in $Y\times [0,1]$ have the same $\Q\slash\Z$ self-linking number. 

\subsection{Rational slice surfaces}\label{sec:RatSlice}  Our goal is to study the following 4-dimensional analogue of rational Seifert surfaces, which we defined in the introduction and restate here for the reader.

\begin{defn}
A \emph{rational slice surface} for a knot $K\subset Y$ is a compact, oriented surface $\Sigma$ with boundary, along with a map $\varphi:(\Sigma,\partial\Sigma)\to (Y\times [0,1], K\times \{1\})$ such that $\varphi\vert_{int(\Sigma)}$ is an embedding in $Y\times [0,1]\setminus K\times \{1\}$ and such that $\varphi\vert_{\partial\Sigma}$ is an oriented covering map $\partial\Sigma\to K\subset Y\times \{1\}$ of degree $p$. We use $\Sigma$ to denote the singular surface arising as the image of this map. 
\end{defn}

Classically, a slice surface for a knot $K$ in the 3-sphere is a surface in the 4-ball with boundary $K$. We can always construct a slice surface from a Seifert surface, by pushing the Seifert surface into the 4-ball.  Similarly, we can push a rational Seifert surface $F$ in $Y$ into $Y\times [0,1]$ to get a rational slice surface. In contrast to the classical case, however,  near the singular set  a general rational slice surface may not look like the push-in of rational Seifert surface.  Rather than circles nearby the boundary mapping to a particular cable link of $K$, as they do for a rational Seifert surface, they can map to an arbitrary closed braid embedded in a solid torus neighborhood of $K$.

To clarify this important point, note that in $Y\times [0,1]$ the knot $K\times \{1\}\subset Y\times \{1\}$ has a simultaneously tubular and collar neighborhood:
\begin{equation}\label{eq:tubularcollar}
    \nu(K)\cong S^1\times D^2\times [1-\epsilon,1],
\end{equation}in which $K$ is embedded as $S^1\times \{ 0\}\times \{1\}$.  In the following, we regard rational slice surfaces up to isotopy of their defining maps which are fixed on the boundary.    
\
\begin{prop}\label{prop:neighborhood}
Let $\Sigma\subset Y\times [0,1]$ be a rational slice surface for a knot $K$ in $Y\times \{1\}$. Then for all $\eta\in [1-\epsilon,1)$ sufficiently close to 1, $\Sigma\cap Y\times\{\eta\}$ is a closed braid $\beta\subset S^1\times D^2\times \{\eta\}$   whose isotopy class in the solid torus is independent of $\eta$.  In particular, $\Sigma$ specifies a well-defined braided satellite link $P_\beta(K)$ with respect to the framing given by Equation \eqref{eq:tubularcollar}.  Any braided satellite $P_\beta(K)$ whose index is a multiple of the order of $[K]\in H_1(Y)$ arises from a rational slice surface in this manner. 
\end{prop}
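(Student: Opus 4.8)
The plan is to analyze $\Sigma$ inside the tubular-collar neighborhood $\nu(K)\cong S^1\times D^2\times[1-\epsilon,1]$ of Equation~\eqref{eq:tubularcollar}: for the first assertion I would put a collar of $\partial\Sigma$ into ``vertical'' standard position and read off the braid from the horizontal slices; for the converse I would cone the prescribed closed braid down onto $K$ and cap off the leftover boundary, using that it is null-homologous. Write $t$ for the $[0,1]$-coordinate and let $A\cong\partial\Sigma\times[0,1)$ be a collar of $\partial\Sigma$ in $\Sigma$. Since $\varphi(\mathrm{int}(\Sigma))$ is disjoint from $K\times\{1\}$ (and, after a small isotopy fixed on $\partial\Sigma$, we may take it disjoint from all of $Y\times\{1\}$), the complement in $\Sigma$ of a smaller open collar of $\partial\Sigma$ is a compact subset of $\mathrm{int}(\Sigma)$ whose image lies a positive distance from $K\times\{1\}$; shrinking $\epsilon$ and the radius of the $D^2$-factor if necessary, this gives $\nu(K)\cap\varphi(\Sigma)\subset\varphi(A)$ and $\varphi(A)\subset\nu(K)$. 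Hence $(t\circ\varphi)^{-1}(1)=\partial\Sigma$, and for $\eta$ close to $1$ the slice $\Sigma\cap(Y\times\{\eta\})$ lies in $S^1\times D^2\times\{\eta\}$ and equals $\varphi\bigl(A\cap(t\circ\varphi)^{-1}(\eta)\bigr)$. After a further isotopy of $\varphi$ fixed on $\partial\Sigma$---first perturb to make $\varphi$ transverse to $Y\times\{1\}$, which is possible rel $\partial\Sigma$ since only a nonvanishing normal $\partial_t$-derivative along the boundary is required, then straighten the collar---we may assume that $t\circ\varphi$ restricted to $\partial\Sigma\times(1-\delta,1]$ is the projection onto $(1-\delta,1]$, so that for $\eta\in(1-\delta,1)$ the slice $\Sigma\cap(Y\times\{\eta\})$ is the embedded curve $\iota_\eta(\partial\Sigma)$, where $\iota_\eta:=\varphi|_{\partial\Sigma\times\{\eta\}}\colon\partial\Sigma\hookrightarrow S^1\times D^2$.

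Now the braid emerges. As $\eta\to1$ the embeddings $\iota_\eta$ converge in $C^1$ to the inclusion of the oriented degree $p$ covering $\partial\Sigma\to K=S^1\times\{0\}$; in particular the composite $\partial\Sigma\xrightarrow{\iota_1}S^1\times D^2\to S^1$ is a submersion with everywhere positive derivative, and by openness of this condition the same holds for $\iota_\eta$ once $\eta$ is sufficiently close to $1$. Therefore $\iota_\eta(\partial\Sigma)$ is transverse to every meridian disk $\{*\}\times D^2$, with all intersections positive---that is, a closed braid $\beta$ on $p$ strands. For any two such levels $\eta,\eta'$ the family $\{\iota_s(\partial\Sigma)\}_{s\in[\eta,\eta']}$ is an isotopy through closed braids in $S^1\times D^2$, so the isotopy class of $\beta$ in the solid torus is independent of $\eta$; this independence, together with the fact that rational slice surfaces are considered up to isotopy of their defining maps fixed on $\partial\Sigma$, shows that $\Sigma$ determines a well-defined braided satellite link $P_\beta(K)$ (formed with respect to the framing of \eqref{eq:tubularcollar}).

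For the converse, suppose $P_\beta(K)$ is a braided satellite with $\beta\in B_p$ and $p$ a multiple of the order of $[K]\in H_1(Y)$. Inside $\nu(K)\cong S^1\times D^2\times[1-\epsilon,1]$ form the cone on the closure $\widehat\beta\subset S^1\times D^2$ of $\beta$,
$$
\Sigma_0=\bigl\{\,(\theta,(1-s)z,\,1-s\epsilon)\ :\ (\theta,z)\in\widehat\beta,\ s\in[0,1]\,\bigr\},
$$
parametrized by $\widehat\beta\times[0,1]$. Because $\widehat\beta$ is a closed braid, this parametrization is an embedding for $s<1$---the $t$-coordinate $1-s\epsilon$ separates the levels and rescaling the $D^2$-factor by $1-s>0$ keeps each level embedded---its restriction to the $s=1$ boundary is precisely the oriented degree $p$ covering $\widehat\beta\to K\times\{1\}$, and its $s=0$ boundary is $\widehat\beta\times\{1-\epsilon\}$, a copy of the link $P_\beta(K)$ lying in $Y\times\{1-\epsilon\}$. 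Since the homology class of a closed $p$-braid in $S^1\times D^2$ is $p$ times the core, $[P_\beta(K)]=p[K]=0\in H_1(Y;\Z)$, so $P_\beta(K)$ bounds an embedded oriented surface $F\subset Y$; pushing $F$ into $Y\times[0,1-\epsilon)$ and gluing it to $\Sigma_0$ along the matching product collars of $P_\beta(K)\times\{1-\epsilon\}$ yields a rational slice surface for $K$ whose horizontal slices near $t=1$ are scaled copies of $\widehat\beta$, hence whose associated braid is $\beta$.

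The hardest point is really in the converse, and it is already visible in the homological identity $[P_\beta(K)]=p[K]$: this is exactly why the index of the pattern must be divisible by the order of $[K]$, and when it is not there is no rational slice surface realizing $\beta$ at all. In the forward direction there is no serious obstruction, but the step requiring genuine care is the boundary-fixing isotopy putting a collar of $\partial\Sigma$ into vertical standard position---this is what upgrades ``$\Sigma$ near $K\times\{1\}$'' from a possibly intricate surface to a product, so that the horizontal slices are honest embedded curves whose projections to $S^1$ are covering maps, making ``closed braid'' literally correct rather than merely correct up to isotopy.
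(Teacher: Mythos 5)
Your proof takes essentially the same approach as the paper: in the forward direction, isotope a collar of $\partial\Sigma$ rel boundary into level-preserving position and observe that since $\partial\Sigma\to K$ is a covering, the nearby horizontal slices project to $S^1$ as submersions with positive derivative and hence are closed braids; in the converse, cone the closed braid onto $K$ (the paper phrases this as the mapping cylinder of $\widehat\beta\to K$) and cap off the null-homologous link $P_\beta(K)$ with a pushed-in Seifert surface. The one small slip is in your parametrization of the cone: with $t$-coordinate $1-s\epsilon$ the boundary $s=0$ sits at $t=1$ and $s=1$ sits at $t=1-\epsilon$, the reverse of what you assert; you want $t=(1-\epsilon)+s\epsilon$ so that $s=1$ gives the degree-$p$ covering over $K\times\{1\}$ and $s=0$ gives $\widehat\beta\times\{1-\epsilon\}$, but this is a harmless typo and the construction is otherwise correct.
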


\begin{proof} Up to isotopy of $\Sigma$ rel boundary, we may assume that $\partial\Sigma$ possesses  a collar neighborhood  $\partial \Sigma\times[1-\epsilon,1]$ which is mapped to $Y\times [1-\epsilon,1]$ in a level-preserving fashion.  
Since $\Sigma$ is embedded away from its boundary, this implies that  for $\eta$ near enough to $1$,   $\partial \Sigma\times \{\eta\}$ is mapped to a link $L\subset S^1\times D^2\times \{\eta\}$ embedded in the solid torus neighborhood of $K\times\{\eta\}\subset Y\times \{\eta\}$ provided by \eqref{eq:tubularcollar}.  Moreover, the fact that $\partial \Sigma\times\{1\}\rightarrow K\times\{1\}$ is a smooth covering implies that the projection $L\subset S^1\times D^2\times \{\eta\}\rightarrow S^1$ is also a covering for all $\eta$ sufficiently close to $1$.  Thus $L$ is none other than a closed braid, embedded in a solid torus neighborhood of $K$.  The level-preserving embedding of $\partial\Sigma\times [\eta,\eta']$ into $S^1\times D^2\times [\eta,\eta']$ provides an isotopy  in the solid torus between any two such closed braids, again provided $\eta$ is sufficiently close to $1$.   

It is straightforward to see that any closed braid $\beta$, embedded as a satellite $P_\beta(K)$  of $K$ in its tubular neighborhood, arises from some rational slice surface, provided only that the index of the braid is a multiple of the order of the homology class of $K$.  Indeed, such a closed braid satellite will be null-homologous and hence bound smoothly embedded slice-surfaces in $Y\times [0,1]$.  Extending such a surface by the mapping cylinder of the projection map from the solid torus to $K$, we arrive at a rational slice surface.
\end{proof}  

Throughout, we refer to the link $P_\beta(K)$  as the braided satellite \emph{induced by $\Sigma$}.

\subsection{Rational genera}\label{rationalgenusdefinition}
First, a bit of notation. For a connected, oriented surface $S$ write $\chi^-(S)=\max\{-\chi(S),0\}$. If $S$ is not connected, $\chi^-(S)$ is the sum of $\chi(S_i)$ over all connected components $S_i$ of $S$. 

Now for a rationally null-homologous knot $K$ in a 3-manifold $Y$ we define the \emph{rational Seifert genus} of $K$ to be: $$\lVert K\rVert_Y=\inf_{F}\frac{\chi^-(F)}{2p}$$ where the infimum is taken over all rational Seifert surfaces for $K$ and all $p$. 

In a similar manner, define the \emph{rational slice genus} of $K$ in $Y\times [0,1]$ to be: $$\lVert K\rVert_{Y\times [0,1]}=\inf_{\Sigma}\frac{\chi^-(\Sigma)}{2p}$$ where $\Sigma$ is a rational slice surface for $K$. Clearly, we have the inequality 
\begin{equation}\label{eqn:4genus3genus}
\lVert K\rVert_{Y\times [0,1]}\leq \lVert K\rVert_{Y}.
\end{equation}

\begin{remark}The rational slice genus is constant on concordance classes of knots in $Y\times [0,1]$. To see this, let $K$ be a knot in $Y\times \{1/2\}$ and $J$ a concordant knot in $Y\times\{1\}$. Now, if $\Sigma_K\subset Y\times [0,1/2]$ is a rational slice surface for $K$, we can construct a rational slice surface for $J$ as follows. Remove a neighborhood of the singular set of $\Sigma_K$. The resulting surface $\overline\Sigma$ now has boundary that is a satellite $P_{\beta}(K,\lambda)$ of $K$. Extend $\overline\Sigma$ by the oft-used induced concordance to the corresponding satellite of $J$. This yields a surface, of the same topological type, whose boundary is  $P_{\beta}(J,\lambda)$. Coning the boundary to $J$ gives a rational slice surface $\Sigma_J$ where $\chi(\Sigma_K)=\chi(\Sigma_J)$.
\end{remark}

The rational Seifert genus is always achieved by some rational Seifert surface, and is therefore  a rational number.  This is a consequence of the linearity of the Thurston norm along rays through the origin, and the fact that there is a unique primitive slope on  $\partial \nu(K)$ which dies in the first of homology of the knot complement.  As we are forced to consider all braided patterns knots, and as there aren't analogous linearity properties for minimal genus problems in smooth 4-manifolds, the following question might be rather subtle. 

\begin{ques}
Is the rational slice genus always a rational number?
\end{ques}

We would also like to have a better understanding of knots whose rational slice genus is zero. In \cite[Theorem 2.11]{Calegari-Gordon}, Calegari and Gordon show the only knots whose rational Seifert surfaces can be a disk are null-homologous unknots in $Y$ or a knot that forms the core of a genus one Heegaard splitting of a lens space summand of $Y$. Connect summing one of these knots with a slice knot in the 3-sphere, yields many knots with slice surfaces that are disks, and whose rational slice genus therefore is zero. 

\begin{ques}
Are there knots whose minimal genus rational slice surfaces are disks and which are not contained in lens space summands?
\end{ques}

\subsection{Knot Floer homology and $\tau$ invariants}
Here we give a brief overview of knot Floer homology for rationally null-homologous knots. In the present context, we need only the ``hat version" of Floer homology. For further details see \cite{RelativeAdjunction}.  

In \cite{HolDisk}, Ozsv\'ath and Szab\'o define a complex $\CFa(Y)$ associated to a pointed Heegaard diagram $(\Sigma,\alphas,\betas,w)$ for a 3-manifold $Y$. This complex is generated over $\F=\Z\slash2\Z$ by intersection points $\x$ between Lagrangian tori $\Ta$ and $\Tb$ in the $g$-fold symmetric product of $\Sigma$ with itself, where $g$ is the genus of $\Sigma$. The differential is given by $$\hat\partial(\x)=\sum_{\y\in\Ta\cap\Tb}\sum_{\substack{\phi\in\pi_2(\x,\y)\\ \mu(\phi)=1 \\ n_w(\phi)=0}}\#\widehat{\mathcal{M}}(\phi) \cdot\y.$$
Here $\#\widehat{\mathcal{M}}(\phi)$ denotes the number of points modulo two in the unparameterized moduli space of pseudo-holomorphic disks from $\x$ to $\y$ in the homotopy class of $\phi$, $\mu(\phi)$ is the Maslov index, and $n_w(\phi)$ is the intersection number of $\phi$ with the complex codimension one subvariety $V_w$ in the symmetric product consisting of unordered tuples that contain the point $w$. The basepoint $w$ also determines a map $\spinc_w(-):\Ta\cap\Tb\to \SpinC(Y)$ along which the complex splits and we write $\CFa(Y,\spinc)$ for the summand corresponding to $\spinc$. Ozsv\'ath and Szab\'o show the chain homotopy type of $\CFa(Y,\spinc)$ is an invariant of the underlying 3-manifold.

In \cite{Knots} and \cite{RationalSurgeries} Ozsv\'ath and Szab\'o show an oriented rationally null-homologous knot $K$ in $Y$ gives rise to an addition filtration on $\CFa(Y)$. A choice of doubly pointed Heegaard diagram $(\Sigma,\alphas,\betas,w,z)$ contains the information of a relative Morse function for the pair $(Y,K)$, where the knot is realized geometrically as a union of flow lines $K=\gamma_w-\gamma_z$ from the unique index zero to the unique index three critical point of the Morse function. In the symmetric product, we count the intersections of $V_w$ and $V_z$ with the pseudo-holomorphic disks counted by the differential. More specifically, the additional $z$ basepoint determines a map $\spinc_{w,z}:\Ta\cap\Tb\to\SpinC(Y,K)$ through which the map $\spinc_w(-)$ factors.

The Alexander grading of a generator $\x$ in $\Ta\cap\Tb$ is $$A_{Y,K,[F]}(\x)=\frac{1}{2p}(\langle c_1(\spinc_{w,z}(\x)),[F]\rangle +[\mu]\cdot[F]),$$  and is extended to arbitrary chains in $\CFa(Y)$ by the convention that the Alexander grading of a linear combination is the maximum value of any of the constituent generators $\x\in \Ta \cap \Tb$.  In the definition, $[F]$ is the homology class of a rational Seifert surface for $K$.  Different choices of $[F]$ will shift the Alexander grading, as will different choices of trivialization of the $\SpinC$ structure along the boundary, see \cite[Remark 2.2]{RelativeAdjunction}.  Typically we suppress this dependence and write $A_{Y,K}$. 
 
 \begin{defn}
 Let $K$ be a rationally null-homologous knot in a 3-manifold $Y$ and $\alpha$ a nontrivial class in $\HFa(Y)$, then
 $$\tau_\alpha(Y,K):=\min\{A_{Y,K}(\x) \mid \x\in\CFa(Y) \text{ and } [\x]=\alpha\in\HFa(Y)\}.$$
 \end{defn}  
 \noindent We  simplify notation by writing $\tau_\alpha(K)$ when $Y$ is understood.

 In \cite{RelativeAdjunction} the authors also define $\tau$ invariants for links via Ozsv\'ath and Szab\'o's knotification construction. If $L\subset Y$ is a link, then the \emph{knotification} of $L$ is a knot $\kappa(L)$ in $Y\#^{|L|-1}\SoneStwo$ where $|L|$ is the number of components of the link, see \cite[Section 5.3]{RelativeAdjunction} or \cite[Section 2.1]{Knots} for more details. 
 \begin{defn}\label{defn:linktau}
     Let $L\subset Y$ be a rationally null-homologous link with $|L|$ components, $\alpha$ a nontrivial class in $\HFa(Y)$ and $\Theta$ a nontrivial class in $\HFa(\#^{|L|-1}\SoneStwo)$ then,
         $$\tau_{\alpha\otimes\Theta}(Y,L):=\tau_{\alpha\otimes\Theta}(Y\#^{|L|-1}\SoneStwo, \kappa(L)).$$
 \end{defn}

\subsection{The Alexander grading, rational linking, and $\tau$-invariants}

The Alexander grading is closely related to the $\Q\slash\Z$ linking form on $Y$. Relative $\SpinC$ structures are in affine correspondence with $H^2(Y,K)$, in direct analogy with the affine identification between  $\SpinC$ structures on  $Y$ and $H^2(Y)$.  Using this, one can  view a relative $\SpinC$ structure as a chosen lift of an absolute $\SpinC$ structure along the map $H^2(Y,K)\rightarrow H^2(Y)$.    In particular, a pair of relative $\SpinC$ structures $\xi_1$ and $\xi_2$ on $Y$ determine a well-defined cohomology class $\xi_2-\xi_1$ in $H^2(Y,K)$ that maps to a class $PD(\varphi)$ in $H^2(Y)$. It follows that  \begin{equation}\label{eqn:selflink}\lk_{\Q\slash\Z}(\varphi,[K])=\frac{1}{p}\langle \xi_2-\xi_1,[F]\rangle= A_{Y,K}(\xi_2)-A_{Y,K}(\xi_1) \mod\Z
\end{equation}

In other words, the Alexander grading difference between generators in relative $\SpinC$ structure $\xi_1$ and $\xi_2$ is an integral lift of the linking form applied to the homology class $[K]$ and the difference of their underlying absolute $\SpinC$ structures.
In particular:
\begin{prop} For any rationally null-homologous knot $K$, 
$$\tau_{\max}(K)-\tau_{\min}(K)\geq \underset{\varphi\in \Tor H_1(Y)} \max\{\lk_{\Q\slash\Z}(\varphi,[K])\}$$ 
where $\lk_{\Q\slash\Z}(\varphi,[K])$ is the representative of the $\Q\slash\Z$ linking form in $[0,1)$.  
\end{prop}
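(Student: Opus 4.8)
The plan is to exploit Equation~\eqref{eqn:selflink}: the Alexander grading of any generator is pinned to a coset of $\Z$ in $\Q$ that depends only on the underlying $\SpinC$ structure, and the cosets attached to different $\SpinC$ structures are translated relative to one another according to the $\Q\slash\Z$ linking form. The naive move -- comparing $\tau_\alpha$ for a nonzero $\alpha\in\HFa(Y,\spinc)$ against $\tau_{\alpha'}$ for a nonzero $\alpha'\in\HFa(Y,\spinc+\PD(\varphi))$ -- only tells us that $\tau_{\max}(K)-\tau_{\min}(K)$ is a nonnegative real congruent to $\lk_{\Q\slash\Z}(\varphi,[K])$ modulo $\Z$, hence only yields the bound $\min\{c,1-c\}$ with $c=\lk_{\Q\slash\Z}(\varphi,[K])\in[0,1)$, which is strictly weaker than the claim when $c>\tfrac12$. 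The remedy is to carry out the comparison along an entire cyclic orbit of $\SpinC$ structures, and to first rewrite the right-hand side. Let $p$ be the order of $[K]$ in $H_1(Y;\Z)$. Since the linking form on $\Tor H_1(Y;\Z)$ is nondegenerate, the homomorphism $\varphi\mapsto\lk_{\Q\slash\Z}(\varphi,[K])$ has kernel $\langle[K]\rangle^{\perp}$ of index $p$, so its image is exactly $\tfrac1p\Z\slash\Z$; thus the right-hand side equals $\tfrac{p-1}{p}$, and it is enough to prove $\tau_{\max}(K)-\tau_{\min}(K)\ge\tfrac{p-1}{p}$.

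To set up the orbit I would fix $\varphi_1\in\Tor H_1(Y)$ with $\lk_{\Q\slash\Z}(\varphi_1,[K])=\tfrac1p$, let $\phi=\PD(\varphi_1)\in H^2(Y;\Z)$ (an element of order $p$), and fix a torsion $\SpinC$ structure $\spinc_0$ on $Y$. The $\SpinC$ structures $\spinc_k:=\spinc_0+k\phi$, indexed by $k\in\Z/p$, are then all torsion, so $\HFa(Y,\spinc_k)\neq 0$ for every $k$ by the nonvanishing theorem of \ons. Choose nonzero classes $\alpha_k\in\HFa(Y,\spinc_k)$ and set $\tau_k:=\tau_{\alpha_k}(K)$. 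Because $\CFa(Y)$ splits along $\SpinC(Y)$, each $\alpha_k$ is represented by a cycle supported in $\spinc_k$, so $\tau_k$ is a maximum of Alexander gradings of generators all lying in $\spinc_k$ and hence lies in the coset of $\Z$ attached to $\spinc_k$. Equation~\eqref{eqn:selflink} then gives $\tau_{k+1}-\tau_k\equiv\tfrac1p\pmod\Z$ for every $k\in\Z/p$, the relation closing up cyclically because $\spinc_0+p\phi=\spinc_0$; consequently $\tau_{k_0+j}-\tau_{k_0}\equiv\tfrac{j}{p}\pmod\Z$ for all indices $k_0$ and all $j$.

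The finish is a pigeonhole observation. Choose $k_0$ with $\tau_{k_0}=\min_k\tau_k=:m$. For each $j$ with $0\le j\le p-1$, the quantity $\tau_{k_0+j}-m$ is nonnegative (by minimality of $m$) and congruent to $\tfrac{j}{p}$ modulo $\Z$, which forces $\tau_{k_0+j}\ge m+\tfrac{j}{p}$; taking $j=p-1$ gives $\max_k\tau_k\ge m+\tfrac{p-1}{p}$. Since $\tau_{\max}(K)\ge\max_k\tau_k$ and $\tau_{\min}(K)\le m$, this yields $\tau_{\max}(K)-\tau_{\min}(K)\ge\tfrac{p-1}{p}$, which is the desired inequality.

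The main obstacle is conceptual rather than computational: one has to recognize that a single pair of $\SpinC$ structures cannot detect more than $\min\{c,1-c\}$, so the whole order-$p$ orbit of $\phi$ must be brought into play. Beyond that, the two points needing care are keeping every $\spinc_k$ torsion -- this is what lets the $\HFa$-nonvanishing theorem apply uniformly along the orbit -- and applying Equation~\eqref{eqn:selflink} to the correct difference of relative $\SpinC$ structures, so that the successive increments are exactly $\tfrac1p$ and not merely congruent to it modulo $\Z$.
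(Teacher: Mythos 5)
Your proof is correct in substance, but it misdiagnoses the difficulty and as a result takes a considerably longer route than the paper. You dismiss the ``naive move'' --- comparing a class $\alpha\in\HFa(Y,\spinc)$ with a class $\gamma\in\HFa(Y,\spinc+\PD(\varphi))$ --- on the grounds that it yields only $\min\{c,1-c\}$. But the paper's proof \emph{is} the naive move, made effective by one extra observation you overlooked: choose $\alpha$ to be a class actually realizing $\tau_{\min}(K)$. Then the difference $\tau_\gamma(K)-\tau_\alpha(K)$ is automatically nonnegative (by minimality) and, by Equation~\eqref{eqn:selflink}, is congruent to $c=\lk_{\Q/\Z}(\varphi,[K])$ modulo $\Z$. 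A nonnegative real congruent to $c\in[0,1)$ modulo $\Z$ is $\geq c$, and since $\tau_{\max}(K)-\tau_{\min}(K)\geq\tau_\gamma(K)-\tau_{\min}(K)$ the inequality follows in two lines, for a single well-chosen pair of $\SpinC$ structures. Your cyclic-orbit argument reaches the same conclusion (after rewriting the right-hand side as $\frac{p-1}{p}$, which is fine), and does have the modest virtue that by anchoring at a \emph{torsion} $\spinc_0$ it sidesteps any worry about whether $\HFa$ is nonzero in the $\SpinC$ structure $\spinc+\PD(\varphi)$ when the minimizer $\spinc$ is non-torsion. But the core ``pigeonhole'' step --- taking $k_0$ to minimize $\tau_k$ so that all forward differences are nonnegative, and then reading off $\geq j/p$ from the congruence --- is precisely the paper's one-pair argument repeated $p$ times.

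There is also a small but genuine error in your setup: you assert that $\phi=\PD(\varphi_1)$ has order $p$, and correspondingly index the orbit by $\Z/p$ with the relation ``closing up cyclically because $\spinc_0+p\phi=\spinc_0$.'' This need not hold. For instance in $Y=L(p^2,1)$ with $H_1(Y)\cong\Z/p^2$ and $[K]$ of order $p$ (so $[K]=p\in\Z/p^2$), the elements $\varphi_1$ with $\lk_{\Q/\Z}(\varphi_1,[K])=\frac1p$ are exactly those congruent to a unit mod $p$, all of which have order $p^2$, not $p$. So $p\phi\neq 0$ and the orbit does not close after $p$ steps. This does not sink the argument --- index instead by $\Z/N$ where $N=\mathrm{ord}(\phi)$, take $k_0$ minimizing $\tau_k$ over that full orbit, and run the pigeonhole over the segment $k_0,\dots,k_0+p-1$, which still lies inside $\Z/N$ since $p\mid N$ --- but as written the claim about the order of $\phi$, and the $\Z/p$ indexing built on it, is false.
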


\begin{proof}

Let $\tau_{\min}(K)=\tau_{\alpha}(K)$ where $\alpha\in\HFa(Y,\spinc)$. Let $\gamma$ be a nontrivial class in $\HFa(Y,\spinc+PD(\varphi))$. Then $\tau_{\max}(K)-\tau_{\min}(K)\geq \tau_\gamma(K)-\tau_{\min}(K)\geq 0$ and $\tau_\gamma(K)-\tau_{\min}(K)=n+\lk_{\Q\slash\Z}(\varphi,[K])$. Thus $n\geq 0$ and we have $$\tau_{\max}(K)-\tau_{\min}(K)\geq\lk_{\Q\slash\Z}(\varphi,[K]). $$ \end{proof}
\begin{remark}
 The reader may be tempted to combine the proposition  with Theorem \ref{thm:RationalGenusBound} to obtain a lower bound for the rational slice  genus of $K$ in terms of the linking form: $$\underset{\varphi\in \Tor H_1(Y)} \max\{\lk_{\Q\slash\Z}(\varphi,[K])\}\leq 2\lVert K\rVert_{Y\times [0,1]}+1.$$ While this is true, the fact that the left hand side is never bigger than 1 renders the bound  useless.
\end{remark}

\section{$\tau$ invariants of $(p,pn+1)$-cables} In this section, we generalize previous work of the first author \cite{CableII} to bound the $\tau$ invariants of $(p,pn+1)$-cables of rationally null-homologous knots. Throughout, we fix a choice of longitude $\lambda$ for $K$ and consider the $(p,pn+1)$-cable of $K$ with respect to this choice, which we denote $K_{p,pn+1}$. We warn the reader that the specific integer $n$ in the cabling parameters depends on $\lambda$. However, we suppress this dependence to simplify notation.

\subsection{Heegaard diagrams for cables} Fix a doubly pointed Heegaard diagram $$H = (\Sigma,\alphas,\betas_0\cup\mu,w,z)$$ adapted to the knot $K$ in $Y$ so that the final $\beta$-curve is a meridian for $K$ and intersects the final $\alpha$-curve, $\alpha_g$ in a unique point: $\alpha_g\cap \mu=x_0$.

For any choice of positive integers $p$ and $n$, a Heegaard diagram for the $(p,pn+1)$-cable is given by $$H(p,n)=(\Sigma,\alphas,\betas_0\cup \tilde{\beta},w,z').$$ The only differences between $H$ and $H(p,n)$ are the final $\beta$-curve and second basepoint. In $H(p,n)$ the curve $\tilde{\beta}$ is obtained from $\mu$ by a finger move winding the meridian $p-1$ times along the $n$-framed longitude $\lambda_n=\lambda+n\mu$ for $K$. After this isotopy, the basepoint $z'$ is added so that the curve connecting $w$ to $z'$ has intersection number $p$ with $\tilde\beta$. See Figure \ref{fig:H(p,n)},  cf. \cite[Figure 1]{CableII}. 

\begin{figure}
\centering{
\includegraphics[width=.4\textwidth]{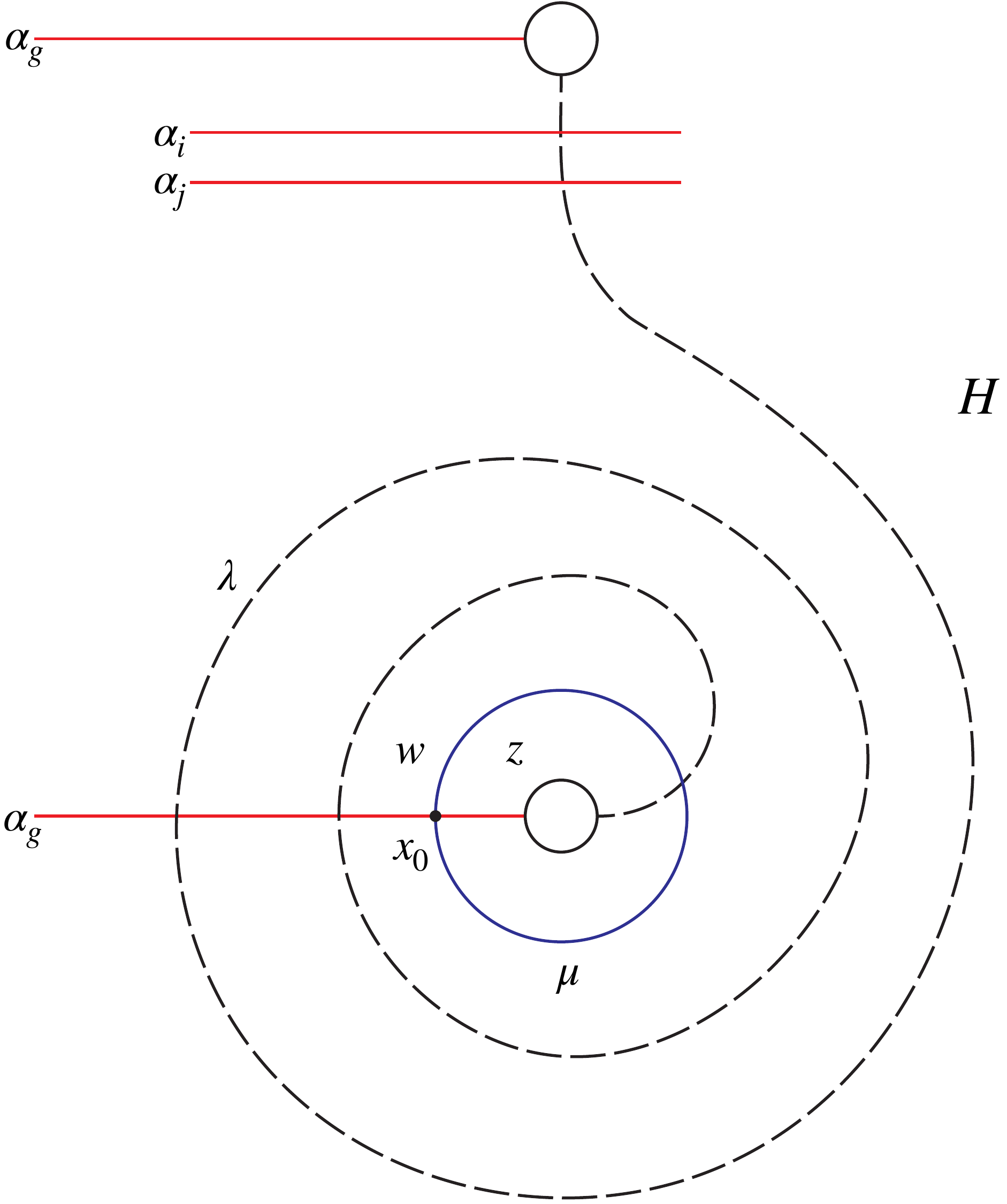}
\hfill
\includegraphics[width=.4\textwidth]{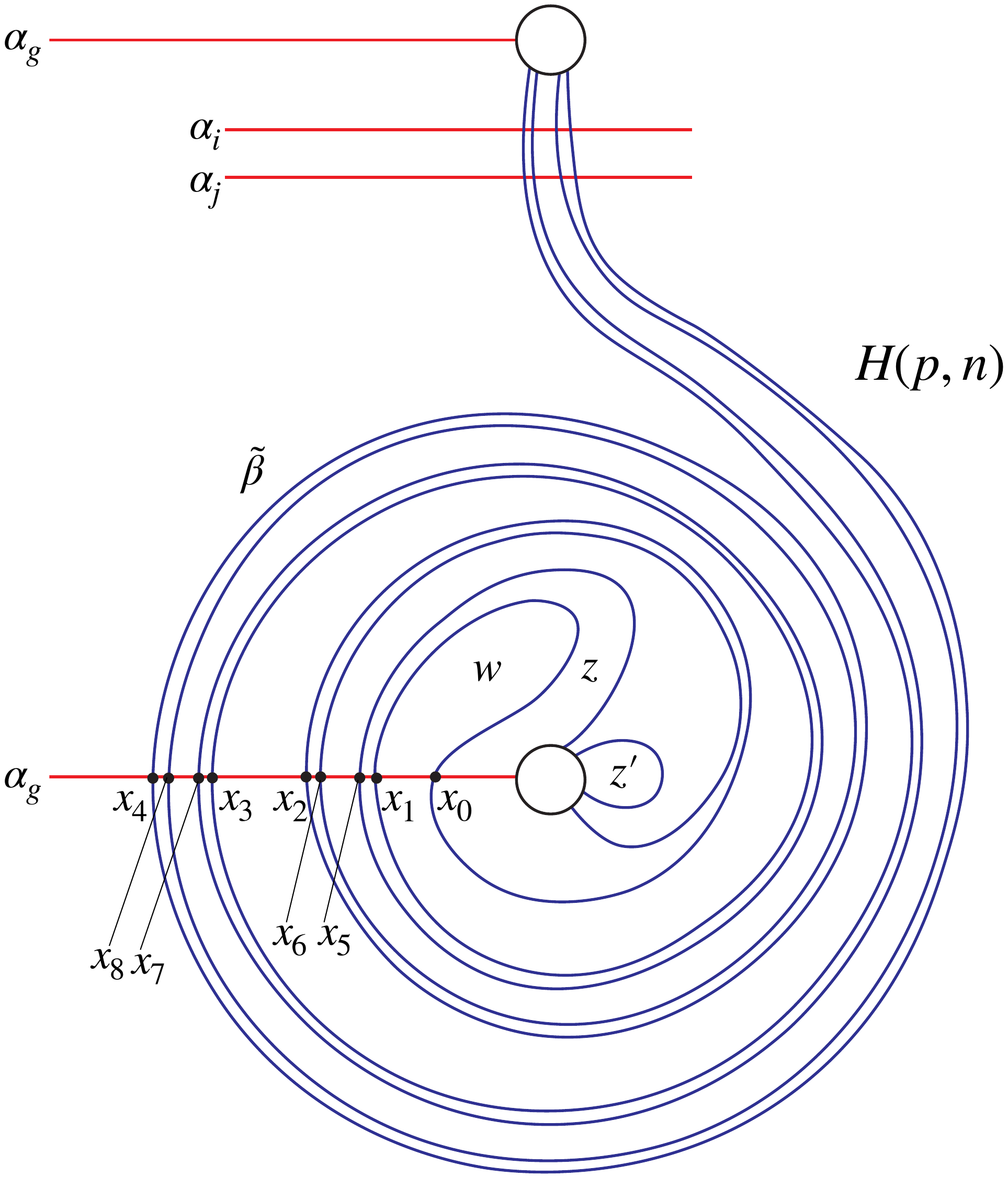}}
\caption{Model Heegaard diagrams for $H$ and $H(p,n)$.}
\label{fig:H(p,n)}
\end{figure}

As in \cite{CableII}, we consider $K$ and its $(p,pn+1)$ cable simultaneously by reintroducing the $z$ basepoint into the diagram $H(p,n)$.  With the additional basepoint,  $H(p,n)$ represents both $K$ and  $K_{p,pn+1}$, depending on whether we use the pair $(w,z)$ or $(w,z')$, respectively.  

Using $H(p,n)$ to calculate the Floer homology of both knots, we establish a correspondence between certain generators of their knot Floer chain complexes. Given a generator $(\y,x_0)$ for $H$, there are corresponding generators of the form $(\y, x_i)$ where for $i=0, \ldots 2n(p-1)$, $x_i$ is an intersection point between $\alpha_g$ and $\tilde\beta$. Keeping with the terminology of \cite{CableII} we refer to generators of the form $(\y,x_i)$ as \emph{exterior}. The ordering of the generators is determined by Figure \ref{fig:H(p,n)} and follows the conventions in \cite{CableII}. We call intersection points of the form $(\y, x_0)$ \emph{outermost intersection points} for $H(p,n)$. Clearly, the number of such generators in $H(p,n)$ corresponding to $(\y,x_0)$ in $H$ increases as $n$ grows.

Let 
\begin{align*}
C(i)&:= \{\y \mid (\y, x_0) \text{ has Alexander grading $i$ in $H$}\}\\
A&:= \text{ Alexander grading for generators of $H(p,n)$ with respect to $(w,z)$ }\\
A'&:= \text{ Alexander grading for generators of $H(p,n)$ with respect to $(w,z')$}
\end{align*}

We first compare Alexander gradings of the outermost intersection points. 

\begin{lemma} [Lemma 2.5 of \cite{CableII}]\label{lemma:AgradingOutermost}
If $\x=(\y, x_0)$ is an outermost intersection point such that $\y$ is in $C(i)$, then the following hold:
\begin{enumerate}
\item The $\SpinC$ structure $\spinc$ associated to $\x$ is independent of $z$ or $z'$ and agrees with the $\SpinC$ structure associated to the corresponding generator for $H$.
\item The Alexander grading of $(\y,x_0)$ with respect to $(w,z)$ is the same in $H$ as it is in $H(p,n)$. In other words: $$A(\y,x_0)=i.$$
\item $\displaystyle A'(\y,x_0)=pA(\y,x_0)+ \frac{p(p-1)\nlink}{2}$ where $\nlink$ is the rational linking number of $K$ and $\lambda_n$. 
\end{enumerate}
\end{lemma}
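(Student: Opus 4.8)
The plan is to treat parts (1) and (2) as soft consequences of the isotopy invariance of the $\SpinC$-structure and relative $\SpinC$-structure maps, and to isolate part (3) as the one genuine computation. For (1): the map $\spinc_w(-)$ depends only on the basepoint $w$, so its values are unchanged by replacing $z$ with $z'$, which gives the first assertion immediately. For the comparison with $H$, recall that $\tilde\beta$ is obtained from $\mu$ by a finger move winding along $\lambda_n$, and this isotopy can be taken supported in a thin annular neighborhood of $\lambda_n$ that is disjoint from both $w$ and $z$ — only the new basepoint $z'$ is inserted so as to meet $\tilde\beta$. Undoing the finger move therefore gives an isotopy of Heegaard diagrams (through $w$, and in fact through $\{w,z\}$) carrying the outermost intersection point $x_0=\alpha_g\cap\tilde\beta$ back to $\alpha_g\cap\mu$; indeed "outermost" is defined precisely as the intersection point surviving this isotopy. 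Since $\spinc_w$ is invariant under isotopies of the attaching curves avoiding $w$, the $\SpinC$-structure of $(\y,x_0)$ in $H(p,n)$ equals that of the corresponding generator in $H$. For (2): the same isotopy avoids $z$ as well, so it preserves the relative $\SpinC$-structure $\spinc_{w,z}(-)$, hence the Alexander grading $A(-)=A_{Y,K}(-)$ computed with respect to $(w,z)$; thus $A(\y,x_0)=i$.

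For (3) I would proceed in two steps. First, show that $A'(\y,x_0)-pA(\y,x_0)$ is independent of the outermost generator. Given two outermost generators $\x_1=(\y_1,x_0)$ and $\x_2=(\y_2,x_0)$ and $\phi\in\pi_2(\x_1,\x_2)$, the difference $A(\x_1)-A(\x_2)$ is computed from $n_z(\phi)-n_w(\phi)$ in the standard way, and $A'(\x_1)-A'(\x_2)$ is computed the same way from $n_{z'}(\phi)-n_w(\phi)$; because both endpoints lie at $x_0$, which sits outside the winding region, $\phi$ may be chosen supported away from the winding region, where the region carrying $z'$ is, after undoing the finger move, the $z$-region traversed $p$ times, so that $n_{z'}(\phi)=p\,n_z(\phi)$. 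Hence $A'-pA$ is a constant $c(p,n)$ on outermost generators. Second, evaluate $c(p,n)$. I would build an explicit rational Seifert surface $F'$ for $K_{p,pn+1}$ out of $p$ parallel $\lambda_n$-framed push-offs of a rational Seifert surface $F$ for $K$, capped off by the cabling annulus, so that $[F']$ differs from $p[F]$ by a meridional correction controlled by the $\binom{p}{2}$ pairwise linkings of the parallel copies. Substituting into the Alexander grading formula $A_{Y,K_{p,pn+1},[F']}(\x)=\frac{1}{2p'}(\langle c_1(\spinc_{w,z'}(\x)),[F']\rangle+[\mu']\cdot[F'])$, using part (1) to identify the underlying absolute $\SpinC$-structure, and applying Proposition \ref{prop:RatSS} to express each pairwise linking as $\nlink=\lk_\Q(K,\lambda_n)$, yields $c(p,n)=\tfrac{p(p-1)\nlink}{2}$.

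The main obstacle will be the bookkeeping in the second step: correctly identifying the homology class $[F']$ and the order $p'$ of $K_{p,pn+1}$ relative to the order of $K$, tracking the $[\mu]\cdot[F]$ normalization terms in the grading formula, and confirming that the linking contributions assemble to exactly $\tfrac{p(p-1)\nlink}{2}$ rather than to some other multiple. An alternative that sidesteps the surface comparison is to compute $c(p,n)$ directly from explicit domains in the winding region of $H(p,n)$, as in \cite{CableII}: one picks a reference outermost generator, a concrete choice of connecting domain into the winding region, and reads off $n_w$, $n_z$, and $n_{z'}$; the same care with these intersection numbers is then the crux, and the factor $\tfrac{p(p-1)}{2}$ emerges as the count of the new intersection points created by winding $\mu$ exactly $p-1$ times along $\lambda_n$.
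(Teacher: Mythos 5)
Your handling of parts (1) and (2) via the isotopy that undoes the finger move is reasonable and matches the spirit of the cited argument; the paper itself simply defers to \cite{CableII} for these, so you have in effect recreated the missing argument.

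For part (3), the paper takes a more direct route than your two-step strategy. Rather than first establishing that $A'-pA$ is constant across outermost generators and then evaluating the constant on a single representative, it computes both $A(\y,x_0)$ and $A'(\y,x_0)$ for \emph{every} outermost generator directly from Proposition 1.3 of \cite{Hedden-Levine}, which expresses the Alexander grading in terms of a relative periodic domain. The key move is to take a relative periodic domain $\sP$ representing a rational Seifert surface for $K$ in $H(p,n)$ and observe that $\tilde\sP = p\sP$ is a relative periodic domain for the cable. The difference $2qA'-2pqA$ then reduces to the quantity $2n_{x_0}(\tilde\sP)-n_{z'}(\tilde\sP)-n_w(\tilde\sP)$, which is read off by tracking multiplicities in the winding region, yielding $p(p-1)q\,\lk_\Q(K,\lambda_n)$. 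This is essentially the ``alternative'' you sketch at the end, and it absorbs all the bookkeeping you rightly worry about into a single winding-region multiplicity count.

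The constancy step in your primary argument has an imprecision worth flagging. You assert that because both endpoints share the $x_0$ component, ``$\phi$ may be chosen supported away from the winding region,'' concluding $n_{z'}(\phi)=p\,n_z(\phi)$. But the domain of a Whitney disk $\phi\in\pi_2(\x_1,\x_2)$ is determined by the homotopy class, not freely chosen, and for generators with $\y_1\in C(i)$, $\y_2\in C(j)$, $i\ne j$, the domain necessarily interacts nontrivially with the $w,z$ regions. What the constancy actually requires is $n_{z'}(\phi)-n_w(\phi)=p\bigl(n_z(\phi)-n_w(\phi)\bigr)$, which is the content of Lemma \ref{lemma:AgradingExterior2} (Lemma 3.4 of \cite{Cabling}) rather than your stated identity; the correct justification comes from analyzing how the winding deforms the regions separating the basepoints, not from choosing $\phi$ to avoid the winding region altogether. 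Your conclusion is right, but the quoted intermediate relation is not, and as written the argument would not go through. The paper's periodic-domain calculation avoids this entirely, since it never compares two generators.
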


\begin{proof}Statements (1) and (2) follow directly from \cite{CableII}. To prove (3), we compute the Alexander gradings using Proposition 1.3 of \cite{Hedden-Levine}. 

Let $q$ be the order of $[K]$ in $H_1(Y)$. Fix a rational Seifert surface $F$ of order $q$ and a choice of framing $\lambda$ for $K$. By Proposition \ref{prop:RatSS}, there exists an integer $r$ such that $\partial F=q\lambda+r\mu$ and $\lk_{\Q}(K,\lambda)=-\frac{r}{q}$.

Let $\sP$ be a relative periodic domain representing the class of $F$ in the Heegaard diagram $(\Sigma, \alphas,\betas_0\cup\tilde\beta, w,z)$. A relative periodic domain $\sP$ is, by definition, a relative 2-chain given by $\sP=\sum_i a_iD_i$ where $D_i$ are connected components of $\Sigma-(\alphas\cup\betas_0\cup\tilde\beta\cup\lambda)$ appearing with multiplicity $a_i$ and 
$$\partial \sP=q\lambda+r\tilde\beta+q\alpha_g+\sum_{i=1}^{g-1}(n_i\alpha_i+m_i\beta_i)$$
where $n_i$ and $m_i$ are the multiplicities of $\alpha_i$ and $\beta_i$ respectively. 

Proposition 1.3 of \cite{Hedden-Levine} shows:
\begin{equation}\label{eqn:Agrading}
2qA(\y,x_0)= \hat\chi(\sP)+2n_{(\y,x_0)}(\sP)-n_z(\sP)-n_w(\sP).
\end{equation}
Write $\gamma$ for the multiplicity of $\sP$ in the region indicated in Figure \ref{fig:windingregion}. We calculate the multiplicities in nearby components relative to $\gamma$ within the winding region. This calculation shows:
\begin{align*}
n_{w}(\sP)+n_{z}(\sP)=2\gamma+q+r\\
2n_{x_0}(\sP)=2\gamma+q+r.
\end{align*}
Recalling that $2n_{(\y,x_0)}(\sP)=2n_{\y}(\sP)+2n_{x_0}(\sP)$ we conclude: $$2qA(\y,x_0)= \hat\chi(\sP)+2n_{\y}(\sP).$$

At the same time, consider a relative periodic domain $\tilde\sP$ for $K_{p,pn+1}$, which is $p\sP$. The longitude for $K_{p,pn+1}$ can be written in terms of $\lambda$ as $$\lambda_{p,n}=p\lambda+(pn+1)\tilde\beta.$$ Then $\tilde\sP$ represents the class of a rational Seifert surface $[F_{p,pn+1}]$ for $K_{p,pn+1}$ and:
\begin{align*}
\partial \tilde\sP&=q\lambda_{p,n}+(pr-q(pn+1))\tilde\beta+pq\alpha_g+\sum_{i=1}^{g-1}(pn_i\alpha_i+pm_i\beta_i).\\
&=q\lambda_{p,n}+N\tilde\beta+pq\alpha_g+\sum_{i=1}^{g-1}(pn_i\alpha_i+pm_i\beta_i).
\end{align*} 
where we shorten the expression by writing $N=pr-q(pn+1)$.

Proposition 1.3 of \cite{Hedden-Levine} applied to $\tilde{\sP}$ now gives:

\begin{equation}\label{eqn:A'grading}
2qA'(\y,x_0)=\hat\chi(\tilde{\sP})+2n_{(\y,x_0)}(\tilde{\sP})-n_{z'}(\tilde\sP)-n_{w}(\tilde\sP).
\end{equation}

\begin{figure}
\includegraphics[width=\textwidth]{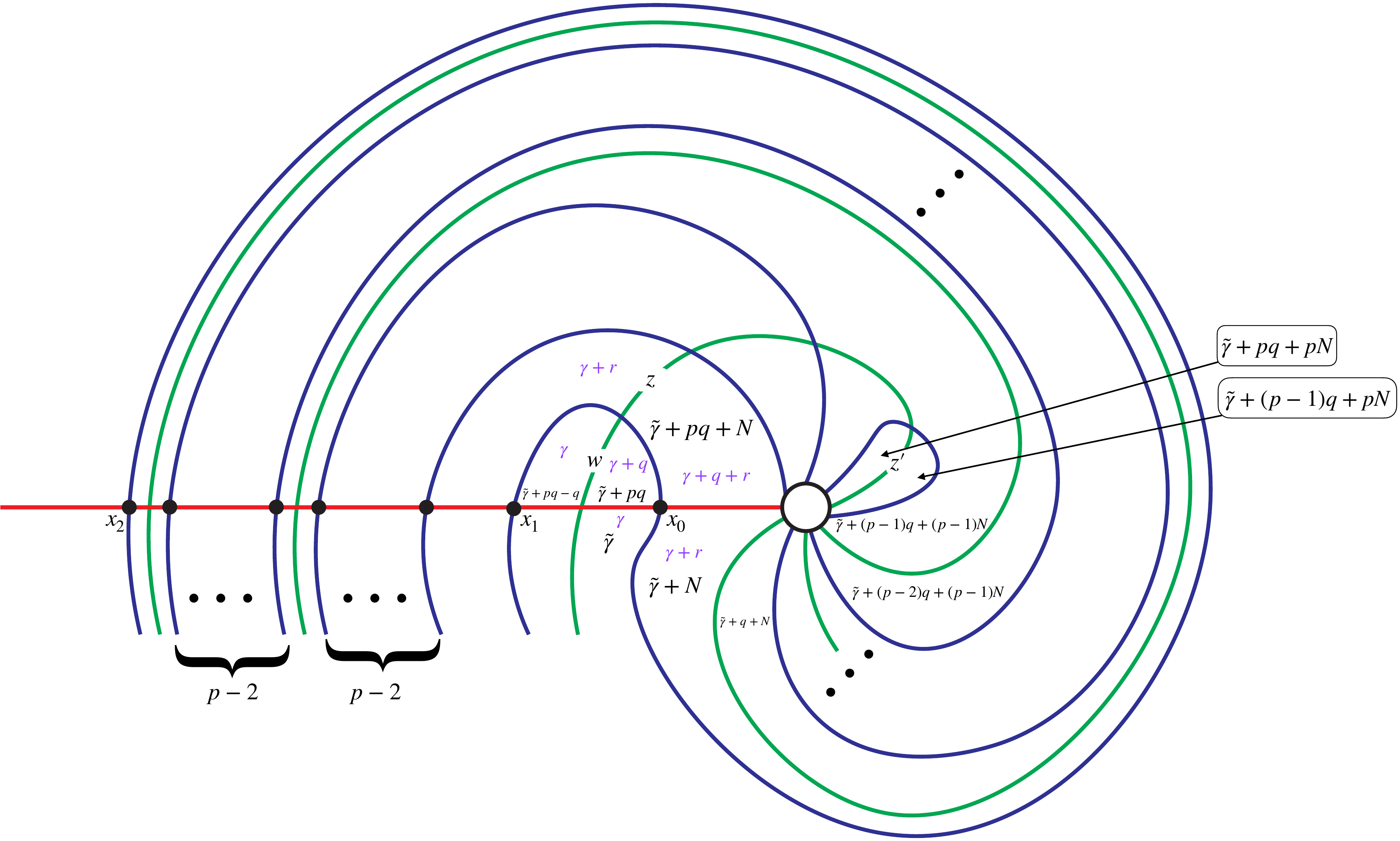}
\caption{The Heegaard diagram $H(p,n)$ near $x_0$. Multiplicities for $\tilde{\sP}$ are indicated in black. Multiplicities for $\sP$ are in purple. }
\label{fig:windingregion}
\end{figure}
 
We refer to the annular neighborhood on $H(p,n)$ of the  meridian of $K$ in which its $n$-framed longitude wraps $n$ times as the {\em winding region}.  Outside of the winding region the multiplicities of $\tilde\sP$ are $p$ times the multiplicities of $\sP$, $2n_{\y}(\tilde\sP)=2pn_{\y}(\sP)$. A model calculation of multiplicities inside the winding region shows $\hat\chi(\tilde \sP)=p\hat\chi(\sP)$.
 
Putting Equations \ref{eqn:Agrading} and \ref{eqn:A'grading} together now gives:
\begin{align*}
2qA'(\y,x_0)&=p\hat\chi(\sP)+2pn_{\y}(\sP)+2n_{x_0}(\tilde{\sP})-n_{ z'}(\tilde\sP)-n_{w}(\tilde\sP)\\
&=2pqA(\y,x_0)+2n_{x_0}(\tilde{\sP})-n_{z'}(\tilde\sP)-n_{w}(\tilde\sP).
\end{align*}

To complete the proof, we will show: $$2n_{x_0}(\tilde{\sP})-n_{z'}(\tilde\sP)-n_{w}(\tilde\sP)=p(p-1)q\nlink.$$

Figure \ref{fig:windingregion} indicates the multiplicities of $\tilde\sP$ near the point $x_0$. Write $\tilde\gamma$ for the multiplicity of the region indicated in Figure \ref{fig:windingregion} with respect to $\tilde\sP$. We compute the other multiplicities in the winding region relative to this one. We have: 
\begin{align*}
n_{z'}(\tilde\sP)+n_{w}(\tilde\sP)= 2\tilde\gamma+2pq+pN-q\\
2n_{x_0}(\tilde\sP)=2\tilde\gamma+pq+N.
\end{align*}
Thus, 
\begin{align*}
2n_{x_0}(\tilde\sP)-n_{z'}(\tilde\sP)-n_{w}(\tilde\sP)&=N-pq-pN+q\\
&=(1-p)(q+N)\\
&=p(qn-r)(p-1)\\
&=p(p-1)q\nlink.
\end{align*}
\end{proof}
\noindent The following lemmas from \cite{Cabling} immediately generalize. 

\begin{lemma} [Lemma 3.3 \cite{Cabling}]\label{lemma:AgradingExterior}
Let $H(p,n)$ be as above. If $i$ is odd and $i<2n$, then
\begin{align*}
A({\y, x_{i-1}})-A({\y}, x_i)&= A'(\y,x_{i-1})-A'(\y,x_i)=1,\\
A(\y, x_{i})-A(\y , x_{i+1})&=0,\\
A'(\y, x_{i})-A'(\y, x_{i+1})&= p-1.
\end{align*}

\end{lemma}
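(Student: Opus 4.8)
The plan is to reduce all three identities to a computation of basepoint multiplicities of small domains inside the winding region of $H(p,n)$, exactly as in the proof of Lemma~3.3 of \cite{Cabling}. The general fact I will use is that if generators $\x,\y$ of a doubly-pointed Heegaard diagram induce the same $\SpinC$ structure on $Y$ and $\phi\in\pi_2(\x,\y)$ is any connecting domain, then $A(\x)-A(\y)=n_z(\phi)-n_w(\phi)$ and, using the other filtration basepoint, $A'(\x)-A'(\y)=n_{z'}(\phi)-n_w(\phi)$. In the rationally null-homologous setting this follows from Proposition~1.3 of \cite{Hedden-Levine} applied exactly as in the proof of Lemma~\ref{lemma:AgradingOutermost}: choosing the same relative periodic domain $\sP$ for $\x$ and $\y$, the Euler-measure terms $\hat\chi(\sP)$ cancel in the difference and, after dividing by the order $q$ of $[K]$, one is left with the multiplicity expression above. (That $n_z-n_w$ is independent of the connecting domain also uses rational null-homology of $K$: two connecting domains differ by a periodic domain $\sP'$, and $n_z(\sP')-n_w(\sP')=[\sP']\cdot[K]=0$.) All of the exterior generators $(\y,x_j)$ with $\y$ fixed share one underlying $\SpinC$ structure on $Y$ --- this is Lemma~\ref{lemma:AgradingOutermost}(1) for $x_0$, and it propagates to the remaining $x_j$ because consecutive ones are joined by domains supported in a disk --- so the formula applies to all the differences in the statement.

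Next I would pin down the winding region. Since $\tilde\beta$ is obtained from the meridian $\mu$ of $K$ by a finger move dragging it $p-1$ times around the $n$-framed longitude $\lambda_n$, an annular neighborhood of $\mu$ in $\Sigma$ carries a picture that is combinatorially identical to the winding region in \cite{Cabling}: it records only how the wound meridian crosses $\alpha_g$ and where the basepoints sit, and it does not see the ambient manifold $Y$. In this region $\alpha_g$ meets $\tilde\beta$ in the points $x_0,x_1,\dots,x_{2n(p-1)}$, and for each relevant $j$ there is a small domain $\phi_j$ connecting $(\y,x_{j-1})$ to $(\y,x_j)$ supported there. Because the filtration arc from $w$ to $z$ meets $\tilde\beta$ in one point while the arc from $w$ to $z'$ meets it in $p$ points, the positions of $w$, $z$ and $z'$ relative to the domains $\phi_j$ agree with those in \cite{Cabling}; this is the content of Figure~\ref{fig:windingregion}.

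With the picture fixed, the third step is to read off multiplicities. For $i$ odd with $i<2n$ --- the condition keeping $\phi_i$ and $\phi_{i+1}$ inside the winding region --- the domain $\phi_i$ from $x_{i-1}$ to $x_i$ misses $w$ and meets $z$ once, and since the $p-1$ extra crossings of the $z'$-arc with $\tilde\beta$ lie away from $\phi_i$ it meets $z'$ exactly once as well, so $n_z(\phi_i)-n_w(\phi_i)=n_{z'}(\phi_i)-n_w(\phi_i)=1$. The adjacent domain $\phi_{i+1}$ from $x_i$ to $x_{i+1}$ misses both $w$ and $z$, giving $n_z(\phi_{i+1})-n_w(\phi_{i+1})=0$, but it sweeps across the part of the winding region where the $z'$-arc accumulates its extra $p-1$ intersections, so $n_{z'}(\phi_{i+1})-n_w(\phi_{i+1})=p-1$. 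Feeding these four numbers into the grading-difference formula yields the three displayed equalities.

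The substantive point --- and the place where the bound $i<2n$ really matters --- is matching the conventions of Figure~\ref{fig:windingregion} to those of \cite{Cabling} carefully enough to be certain that the multiplicity triples $(n_w,n_z,n_{z'})$ of $\phi_i$ and $\phi_{i+1}$ are literally unchanged and that $i<2n$ is exactly the condition placing both domains in the winding region; this is bookkeeping rather than a new idea, which is why the result ``immediately generalizes'' from \cite{Cabling}. The order $q$ of $[K]$ enters only through auxiliary terms that cancel in every difference, so rational null-homology causes no additional complication.
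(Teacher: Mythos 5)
Your proposal is correct and is essentially the argument the paper is gesturing at when it says the lemmas from \cite{Cabling} ``immediately generalize'': you reduce the identities to the grading-difference formula $A(\x)-A(\y)=n_z(\phi)-n_w(\phi)$ (and its $z'$ analogue) for a connecting domain between same-$\SpinC$ generators, and then observe that the relevant domains $\phi_i,\phi_{i+1}$ live entirely in the winding region, whose local combinatorics --- and hence the multiplicity triples $(n_w,n_z,n_{z'})$ --- are literally unchanged from the $S^3$ case in \cite{Cabling}. Your remark that the Hedden--Levine formula is what makes the grading-difference formula valid in the rationally null-homologous setting, and that the order $q$ only enters through terms that cancel in differences, is exactly the point that needs saying and that the paper leaves implicit.
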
 

\begin{lemma} [Lemma 3.4 \cite{Cabling}]\label{lemma:AgradingExterior2}
If ${\bf y}$ in $C(j)$ and ${\bf z}$ in $C(k)$, then
\begin{align*}
A(\y, x_{i})-A(\z, x_{i})&= j-k,\\
A'(\y, x_{i})-A'(\z, x_{i})&= p(j-k).
\end{align*}

\end{lemma}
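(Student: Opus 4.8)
The plan is to prove Lemma \ref{lemma:AgradingExterior2} by running the same relative periodic domain computation used for Lemma \ref{lemma:AgradingOutermost}, isolating the one term in the grading formula of \cite{Hedden-Levine} that depends on the generator. Fix, exactly as in that proof, the order $q$ of $[K]\in H_1(Y)$, a relative periodic domain $\sP$ in $H(p,n)$ representing a rational Seifert surface $[F]$ for $K$ of degree $q$, and the relative periodic domain $\tilde\sP=p\sP$ for $K_{p,pn+1}$. By Proposition 1.3 of \cite{Hedden-Levine}, every generator $\x$ of $H(p,n)$ satisfies $2qA(\x)=\hat\chi(\sP)+2n_{\x}(\sP)-n_z(\sP)-n_w(\sP)$ and $2qA'(\x)=\hat\chi(\tilde\sP)+2n_{\x}(\tilde\sP)-n_{z'}(\tilde\sP)-n_w(\tilde\sP)$. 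Since $\sP$ and $\tilde\sP$ are fixed, every term except the local multiplicity at the generator cancels when we subtract these identities for two generators $\x,\x'$, giving
\[A(\x)-A(\x')=\tfrac{1}{q}\big(n_{\x}(\sP)-n_{\x'}(\sP)\big),\qquad A'(\x)-A'(\x')=\tfrac{1}{q}\big(n_{\x}(\tilde\sP)-n_{\x'}(\tilde\sP)\big).\]

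Next I would specialize to $\x=(\y,x_i)$ and $\x'=(\z,x_i)$. Because $n_{(\y,x_i)}(\sP)=n_{\y}(\sP)+n_{x_i}(\sP)$ and similarly for $\z$, the term $n_{x_i}(\sP)$ coming from the shared $\alpha_g$-coordinate cancels, so $A(\y,x_i)-A(\z,x_i)=\tfrac{1}{q}\big(n_{\y}(\sP)-n_{\z}(\sP)\big)$, which is manifestly independent of $i$. Evaluating at $i=0$ and invoking Lemma \ref{lemma:AgradingOutermost}(2), which gives $A(\y,x_0)=j$ and $A(\z,x_0)=k$, identifies this common difference with $j-k$; this is the first equality. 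For the primed version, the coordinates of $\y$ and $\z$ lie on the curves $\alpha_1,\dots,\alpha_{g-1}$ and $\betas_0$, hence outside the winding region, where the multiplicities of $\tilde\sP$ are exactly $p$ times those of $\sP$ (this is the model multiplicity computation already performed in the proof of Lemma \ref{lemma:AgradingOutermost}). Therefore $n_{\y}(\tilde\sP)=p\,n_{\y}(\sP)$ and $n_{\z}(\tilde\sP)=p\,n_{\z}(\sP)$, so $A'(\y,x_i)-A'(\z,x_i)=\tfrac{p}{q}\big(n_{\y}(\sP)-n_{\z}(\sP)\big)=p\big(A(\y,x_i)-A(\z,x_i)\big)=p(j-k)$.

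The computation has essentially no obstacle beyond bookkeeping: the two points to verify are that $\hat\chi$, $n_z$, $n_w$ (and their primed analogues) of the fixed domains $\sP,\tilde\sP$ are generator-independent, so that the subtraction is legitimate, and that the coordinates of $\y$ and $\z$ genuinely avoid the winding region, which is precisely what produces the factor of $p$ in the second line. Both facts are inherited verbatim from the setup of Lemma \ref{lemma:AgradingOutermost}, so the lemma generalizes immediately from the three-sphere case of \cite{Cabling}.
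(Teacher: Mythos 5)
Your proof is correct, and it fills in a step the paper leaves implicit: the paper does not present an argument for this lemma at all, instead noting that it, together with Lemma \ref{lemma:AgradingExterior}, ``immediately generalize'' from \cite{Cabling}. Your approach reconstructs a genuine proof by reusing the relative-periodic-domain formula of \cite{Hedden-Levine} already deployed in the paper's proof of Lemma \ref{lemma:AgradingOutermost}(3): subtracting the grading formula for two generators kills all global terms, the shared $x_i$-coordinate cancels so the difference is $i$-independent, and $n_{\y}(\tilde\sP)=p\,n_{\y}(\sP)$ for the coordinates lying outside the winding region gives the factor of $p$. The two hinges you flag — that $\hat\chi$, $n_z$, $n_w$ depend only on the fixed domain, and that the $(g-1)$-tuples $\y,\z$ avoid the winding region in the diagram $H(p,n)$ — are exactly right and are indeed inherited from the setup of Lemma \ref{lemma:AgradingOutermost}. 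The anchoring step via Lemma \ref{lemma:AgradingOutermost}(2) to identify the common $i$-independent difference at $i=0$ with $j-k$ is also a clean touch. In short: this is an explicit derivation consistent with the paper's methods where the paper itself just defers to the $S^3$ case.
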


When the cabling parameter $n$ is sufficiently large, the exterior generators become the generators whose Alexander grading is largest.  

\begin{lemma}[Compare Lemma 2.8 \cite{CableII}]\label{lemma:maxa}
Fix an integer $l$, and a $\SpinC$ structure $\spinc$ on $Y$. Write $$\maxa_\spinc=\max\{A(\x)\;  |\; \x \text{ is any intersection point with $\spinc_{w,z}(\x)=\spinc$ exterior or not }\}.$$
Then there exists a constant $N>0$ such that for all $n$ with $n>N$, the only intersection points with $A(\x)\geq \maxa_\spinc-l$ are exterior.
\end{lemma}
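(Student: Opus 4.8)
The plan is to adapt the proof of \cite[Lemma 2.8]{CableII} to the rationally null-homologous setting, exploiting the three grading lemmas above. First I would note that the intersection points of $H(p,n)$ split into two types: the exterior generators $(\y, x_i)$, $i = 0, \ldots, 2n(p-1)$, which sit over the winding region and are in bijection (for each fixed $\y$) with a set of size growing linearly in $n$; and the remaining ``interior'' generators, whose number and whose $\SpinC$-structures and $A$-gradings are \emph{independent of $n$} once $n$ is large enough, since outside the winding region the diagram $H(p,n)$ is identical to $H$. Consequently there is a constant $M$, depending only on $H$, $p$, and $l$, bounding $|A(\x)|$ for every interior generator $\x$ with $\spinc_{w,z}(\x) = \spinc$, uniformly in $n$.

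Next I would show that $\maxa_\spinc$ grows without bound in $n$, and in fact is realized by an exterior generator. By Lemma \ref{lemma:AgradingOutermost}(2), an outermost intersection point $(\y, x_0)$ with $\y \in C(i)$ has $A(\y, x_0) = i$, which is $n$-independent; the key point is instead the \emph{other} exterior generators. Using Lemma \ref{lemma:AgradingExterior}, starting from $(\y, x_0)$ and moving inward through $x_1, x_2, \ldots$, the $A$-grading increases by $1$ at each odd step $i \to i-1$ (read in the direction of increasing grading) up to roughly $i = 2n$, and is constant on the even steps. Hence among the exterior generators associated to a fixed $\y \in C(i)$ there is one whose $A$-grading is approximately $i + n$; more precisely, the maximum $A$-grading over exterior generators in the $\SpinC$ structure $\spinc$ is at least $c + n$ for a constant $c$ independent of $n$ (one must check these large-grading exterior generators indeed lie in the $\SpinC$ structure $\spinc$, which follows from Lemma \ref{lemma:AgradingOutermost}(1) together with the fact that the $x_i$ for $i$ in a fixed parity class represent a controlled set of $\SpinC$ structures — this is exactly the bookkeeping done in \cite{CableII}). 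Therefore $\maxa_\spinc \geq c + n$.

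Combining the two estimates finishes the argument: choose $N$ large enough that $c + N - l > M$. Then for $n > N$ and any interior generator $\x$ with $\spinc_{w,z}(\x) = \spinc$ we have $A(\x) \leq M < c + n - l \leq \maxa_\spinc - l$, so $\x$ cannot satisfy $A(\x) \geq \maxa_\spinc - l$. Hence every generator achieving $A(\x) \geq \maxa_\spinc - l$ is exterior, as claimed.

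The main obstacle I anticipate is the $\SpinC$-structure bookkeeping in the middle step: I need to know not merely that \emph{some} exterior generator has very large grading, but that for each fixed $\spinc$ there are exterior generators in $\spinc$ whose $A$-grading grows linearly with $n$, so that $\maxa_\spinc$ itself (not just $\max_\spinc \maxa_\spinc$) grows. This requires tracking which $x_i$ belong to which $\SpinC$ structure as $\tilde\beta$ winds along $\lambda_n$ — precisely the content of \cite[Lemma 2.5, Lemma 2.8]{CableII} — and verifying that winding distributes generators over $\SpinC$ structures in a way compatible with the linear growth of gradings given by Lemma \ref{lemma:AgradingExterior}. Since $H(p,n)$ agrees with $H$ away from the winding region and the grading lemmas above are stated to ``immediately generalize,'' I expect this to go through verbatim as in \cite{CableII}, but it is the step where the rationally null-homologous hypothesis and the extra factors of $q$ and $p$ could in principle cause trouble, so it deserves the most care.
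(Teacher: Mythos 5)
Your proposal contains a fundamental misreading of Lemma~\ref{lemma:AgradingExterior}. You claim that moving from $x_0$ inward through $x_1, x_2, \ldots$ increases the $A$-grading, so that among the exterior generators over a fixed $\y\in C(i)$ there is one with $A$-grading roughly $i+n$. This is backwards. For odd $i$, that lemma gives $A(\y,x_{i-1}) - A(\y,x_i) = 1$ and $A(\y,x_i) - A(\y,x_{i+1}) = 0$: the $A$-grading \emph{decreases} (or stays constant) as the index $i$ increases. The table in Figure~\ref{fig:Agradings} records this explicitly — the first column has $A$-gradings $\maxa_\spinc,\ \maxa_\spinc-1,\ \maxa_\spinc-1,\ \maxa_\spinc-2,\ \ldots$ going down the rows. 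So for a fixed $\y\in C(i)$ the maximal $A$-grading among the exterior generators $(\y,x_j)$ is $A(\y,x_0)=i$ itself, never anything like $i+n$. Consequently $\maxa_\spinc$ does not grow with $n$ (it is eventually just the maximal Alexander grading of $H$ in $\SpinC$ structure $\spinc$), and the keystone of your plan — choosing $N$ so that $c+N-l$ outgrows a fixed bound $M$ — has nothing to push against. (What \emph{does} grow linearly in $n$ is the $A'$-grading of the outermost generators, via the $\frac{p(p-1)}{2}\nlink$ term of Lemma~\ref{lemma:AgradingOutermost}(3); perhaps this is what misled you, but the present lemma is about $A$, not $A'$.)

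The second premise of your plan — that interior generators have $n$-independent $A$-gradings because the diagram away from the winding region stabilizes — is also not right, and is exactly where the real content of the lemma lives. The local picture near an interior intersection of $\tilde\beta$ with some $\alpha_j$, $j<g$, does stabilize, but the Alexander grading is a \emph{relative} quantity: $A(\x)-A(\x')$ is computed from $n_z(\phi)-n_w(\phi)$ for a Whitney domain $\phi\in\pi_2(\x,\x')$, and for $\x$ exterior and $\x'$ interior such a domain must traverse the winding region, whose multiplicities near the $z$-basepoint grow with $n$. The mechanism behind the lemma (and behind \cite[Lemma 2.8]{CableII}, which you are trying to mirror) is that the interior generators' $A$-gradings drift down relative to the fixed $\maxa_\spinc$ as $n\to\infty$, not that $\maxa_\spinc$ runs away from a fixed interior bound. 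Your outline inverts the roles of the two families of generators, so the final estimate has no support; the argument needs to be redone by controlling $A$ on the interior generators rather than inflating it on the exterior ones.
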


\begin{figure}
\renewcommand\arraystretch{1.5}
\begin{tabular}{c|c|c|c|c}
& $C(\maxa_\spinc)$ & $C(\maxa_\spinc-1)$ & $C(\maxa_\spinc-2)$ & $\ldots$ \\
\hline
$x_0$& $(\maxa_\spinc,\maxa'_\spinc)$ & $(\maxa_\spinc-1, \maxa'_\spinc-p)$ & \cellcolor{black!25} $(\maxa_\spinc-2,\maxa'_\spinc-2p)$&$\ldots$\\
\hline
$x_1$& $(\maxa_\spinc-1,\maxa_\spinc'-1)$&\cellcolor{black!25} $(\maxa_\spinc-2, \maxa_\spinc'-p-1)$& \cellcolor{black!5}$(\maxa_\spinc-3,\maxa_\spinc'-2p-1)$&$\ldots$\\
\hline
$x_2$& $(\maxa_\spinc-1,\maxa_\spinc'-p)$ & \cellcolor{black!25}$(\maxa_\spinc-2, \maxa_\spinc'-2p)$&\cellcolor{black!5}$(\maxa_\spinc-3,\maxa_\spinc'-3p)$ &$\ldots$\\
\hline
$x_3$& \cellcolor{black!25} $(\maxa_\spinc-2,\maxa_\spinc'-p-1)$ & \cellcolor{black!5} $(\maxa_\spinc-3, \maxa_\spinc'-2p-1)$&\cellcolor{black!5}$(\maxa_\spinc-3,\maxa_\spinc'-3p-1)$ &$\ldots$\\
\hline
$x_4$& \cellcolor{black!25}$(\maxa_\spinc-2,\maxa_\spinc'-2p)$ & \cellcolor{black!5}$(\maxa_\spinc-3, \maxa_\spinc'-3p)$& \cellcolor{black!5}$(\maxa_\spinc-3,\maxa_\spinc'-4p)$ &$\ldots$\\
$\vdots$&$\vdots$&$\vdots$&$\vdots$&$\vdots$\\
\end{tabular}
\caption{Here we have a comparison of the $A$ and $A'$ gradings of exterior points --- $\maxa_\spinc$ is the maximum $A$ grading achieved in a given $\SpinC$ structure and $\maxa'_{\spinc}$ represents the $A'$ grading of the same generator. By Lemma \ref{lemma:AgradingOutermost} we know that $\maxa'=p\maxa+\frac{p(p-1)\nlink}{2}$. Incorporating this information with Lemmas \ref{lemma:AgradingExterior} and \ref{lemma:AgradingExterior2} allows us to construct the table.}
\label{fig:Agradings}
\end{figure}

Assume $n$ is large enough to apply Lemma \ref{lemma:maxa}. Then the table in Figure \ref{fig:Agradings} gives the Alexander gradings of exterior generators with respect to the $A$-grading and the $A'$-grading. Analyzing the knot filtration with respect to both gradings we obtain the following theorem:

\begin{theorem}\label{thm:PosCables}
Let $K\subset Y$ be a rationally null-homologous knot and $\spinc$ a $\SpinC$ structure on $Y$. Pick any $M\in \left[\frac{1}{q}\right]\Z$. Then there exists a constant $N(M)>0$ so that for all $n>N(M)$ the following holds for each $j>M$:

\begin{equation*}
H_*\left(\mathcal{F}_{\spinc}\left(Y,K_{p,pn+1}, pj+\frac{p(p-1)\nlink}{2}-1\right)\right)\cong H_*(\mathcal{F}_{\spinc}(Y,K,j-1)).
\end{equation*}
Furthermore, 
\begin{align*}
H_*\left(\Filt_\spinc\left(Y,K_{p,pn+1}, pj+\frac{p(p-1)\nlink}{2}-i\right)\right)\\\cong H_*\left(\Filt_\spinc\left(Y,K_{p,pn+1}, pj+\frac{p(p-1)\nlink}{2}-i-1\right)\right)
\end{align*}
for all $i=2,\ldots, p-1$.

In particular, 
\begin{equation*}
\tau_{\alpha}(Y,K_{p,pn+1})=\begin{cases} p\tau_\alpha(Y,K)+\frac{p(p-1)\nlink}{2}+(p-1) \text{ or }\\
									p\tau_\alpha(Y,K)+\frac{p(p-1)\nlink}{2}.\\
									\end{cases}
\end{equation*}
\end{theorem}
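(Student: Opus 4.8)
The plan is to analyze the knot filtration of $K_{p,pn+1}$ using the diagram $H(p,n)$ and the table in Figure \ref{fig:Agradings}, which records the $(A,A')$-bigrading on exterior generators. Fix a $\SpinC$ structure $\spinc$ and let $\maxa_\spinc$ be the top $A$-grading in that $\SpinC$ structure. First I would fix the truncation level: given $M\in\left[\tfrac1q\right]\Z$, choose $N(M)$ large enough (via Lemma \ref{lemma:maxa}) so that for all $n>N(M)$ every generator with $A$-grading at least $M-1$ (say) is exterior, so that near the top of the filtration the chain complex of $K_{p,pn+1}$ is, \emph{as a group with its $A$-filtration}, identified with several ``stacked'' copies of $\CFa(Y,\spinc)$ and its $K$-filtration: the column labelled by $x_0$ reproduces $(Y,K)$ with $A$-grading unchanged (Lemma \ref{lemma:AgradingOutermost}(2)), and the columns $x_1,\ldots$ are shifts of it by Lemmas \ref{lemma:AgradingExterior} and \ref{lemma:AgradingExterior2}.

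Next I would track the differential. The point is that the $A'$-filtration on $H(p,n)$ is a \emph{refinement} of the $A$-filtration — $A'$ is, up to the additive shift $\tfrac{p(p-1)\nlink}{2}$, roughly $p$ times $A$ with small corrections coming from the winding region — so that $\Filt_\spinc(Y,K_{p,pn+1},s)$ for $s$ in the appropriate window is a subcomplex lying between consecutive levels of the ($p$-times-rescaled and shifted) $A$-filtration. Reading off the table, the generators with $A$-grading $\ge j$ and $x_0$-type are exactly those with $A'$-grading $\ge pj+\tfrac{p(p-1)\nlink}{2}$, while inserting the $x_1,x_2,\dots$ columns only adds generators of strictly smaller $A$-grading; comparing with the $K$-filtration this yields the first isomorphism $H_*(\Filt_\spinc(Y,K_{p,pn+1},pj+\tfrac{p(p-1)\nlink}{2}-1))\cong H_*(\Filt_\spinc(Y,K,j-1))$. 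The ``furthermore'' statement — that the homology is unchanged as one decreases the $K_{p,pn+1}$-filtration level through $pj+\tfrac{p(p-1)\nlink}{2}-i$ for $i=2,\ldots,p-1$ — follows because, again by the table, no exterior generator has $A'$-grading in that range (the $A'$-gradings of exterior generators in a fixed $\SpinC$ structure land in a coset of $p\Z$ shifted by $0$ or $1$, leaving the intermediate values $2,\dots,p-1$ empty), so those subquotients are acyclic and the inclusions are quasi-isomorphisms.

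Finally, to extract the formula for $\tau$: by definition $\tau_\alpha(Y,K)=j-1$ precisely when $\alpha$ is in the image of $H_*(\Filt_\spinc(Y,K,j-1))\to\HFa(Y,\spinc)$ but not of the next level down; feeding this through the two displayed isomorphisms, $\alpha$ survives into $\HFa(Y,\spinc)$ from filtration level $pj+\tfrac{p(p-1)\nlink}{2}-1$ of $K_{p,pn+1}$ but not from level $p(j-1)+\tfrac{p(p-1)\nlink}{2}+(p-1)-1 = pj+\tfrac{p(p-1)\nlink}{2}-p$ (using the ``furthermore'' collapse to move between $-1$ and $-p$), so $\tau_\alpha(Y,K_{p,pn+1})$ equals one of $pj+\tfrac{p(p-1)\nlink}{2}-1+1=pj+\tfrac{p(p-1)\nlink}{2}$ or, if the class first appears one of the intermediate (necessarily killed) steps forces it down, $p\tau_\alpha(Y,K)+\tfrac{p(p-1)\nlink}{2}+(p-1)$; substituting $j-1=\tau_\alpha(Y,K)$ gives the stated dichotomy. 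I expect the main obstacle to be the bookkeeping in the winding region: verifying carefully, from a model computation of multiplicities of the relative periodic domains $\sP$ and $\tilde\sP=p\sP$ (as in the proof of Lemma \ref{lemma:AgradingOutermost} and Figure \ref{fig:windingregion}), that the $A'$-gradings of \emph{all} exterior generators — not just the outermost ones — obey the pattern in Figure \ref{fig:Agradings}, and that the non-exterior generators genuinely drop out after the truncation, so that the two filtrations interleave exactly as claimed with no off-by-one errors in the shift $\tfrac{p(p-1)\nlink}{2}$.
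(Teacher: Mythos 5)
Your plan is essentially the paper's own argument: both proceed by noting that $H(p,n)$, with the differential computed using only the $w$-basepoint, gives the same underlying (filtered) chain groups for $K$ and $K_{p,pn+1}$; both invoke Lemma~\ref{lemma:maxa} to arrange that, in the relevant window of filtration levels, only exterior generators appear; and both read the interleaving of the $A$- and $A'$-filtrations off the table in Figure~\ref{fig:Agradings}. The paper packages the comparison as an isomorphism of chain complexes
\[
\Filt_\spinc\!\Bigl(K_{p,pn+1},\,p\maxa_\spinc+\tfrac{p(p-1)\nlink}{2}-p(k-1)-1\Bigr)\cong\Filt_\spinc(K,\maxa_\spinc-k),
\]
via the change of variable $j-1=\maxa_\spinc-k$, plus the emptiness of the intermediate $A'$-residues mod $p$ — exactly the content you describe. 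Two things you flag but leave implicit are handled by the paper's lemmas: (i) the residue claim is exactly what Lemmas~\ref{lemma:AgradingOutermost}--\ref{lemma:AgradingExterior2} establish for \emph{all} exterior generators (not just outermost ones); (ii) the passage from the filtered-homology isomorphisms to $\tau$ requires taking $M$ sufficiently negative so that $H_*(\Filt_\spinc(K,j))=0$ for $j<M$, which the paper notes explicitly.

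One small arithmetic slip in your last paragraph: with the paper's convention $\tau_\alpha(Y,K)=\min\{A(\x):[\x]=\alpha\}$, if $\alpha$ is first represented from the $A'$-level $pj+\tfrac{p(p-1)\nlink}{2}-1$, then $\tau_\alpha(K_{p,pn+1})$ \emph{equals} $pj+\tfrac{p(p-1)\nlink}{2}-1$ (no ``$+1$''). Substituting $j=\tau_\alpha(K)+1$ gives $p\tau_\alpha(K)+\tfrac{p(p-1)\nlink}{2}+(p-1)$; the other possibility, coming from the collapsed block ending at level $pj+\tfrac{p(p-1)\nlink}{2}-p$, gives $p\tau_\alpha(K)+\tfrac{p(p-1)\nlink}{2}$. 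Your final dichotomy is correct, but the intermediate ``$pj+\tfrac{p(p-1)\nlink}{2}$'' you write down equals $p\tau_\alpha(K)+p+\tfrac{p(p-1)\nlink}{2}$, which is neither of the two stated values.
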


\begin{proof}
As in Theorem 2.2 of \cite{CableII}, the proof follows from analyzing the table in Figure \ref{fig:Agradings}, which shows us the Alexander gradings and corresponding filtration levels of our exterior generators. 

Observe that the diagram $H(p,n)$ is a diagram for $Y$ regardless of whether we use the $z$ or $z'$ basepoint, so the underlying set of generators for the corresponding knot Floer chain complex is the same. Moreover, the differential counts holomorphic Whitney disks with $n_w(\phi)=0$, which is also independent of $z$ and $z'$. Taking $l>\maxa_\spinc-M$ in Lemma \ref{lemma:maxa} and changing variables by writing $j-1=\maxa_\spinc-k$, we have the following isomorphisms of chain complexes for each $k<l$:
$$\Filt_\spinc\left(K_{p,pn+1},p\maxa_\spinc+\frac{p(p-1)\nlink}{2}-p(k-1)-1\right)\cong\Filt_\spinc(K,\maxa_\spinc-k)$$
and 
\begin{align*}
\Filt_\spinc\left(K_{p,pn+1},p\maxa_\spinc+\frac{p(p-1)\nlink}{2}-p(k-1)-i\right)&\quad\quad\\
\quad\quad\cong\Filt_\spinc\left(K_{p,pn+1},p\maxa_\spinc+\frac{p(p-1)\nlink}{2}-p(k-1)-i-1\right)
\end{align*}
for all $i=2,\ldots p-1$. This proves the first part of the theorem.

To show the stated the relationship between the $\tau_{\alpha}$'s, we recall that $$H_*(\Filt_{\spinc}(K, j))=0$$ when $j$ is sufficiently small, since the knot Floer homology is supported in a finite range of Alexander gradings such that $A_{\max}-A_{\min}=2\lVert K\rVert_Y+1$. Taking $M$ to be sufficiently negative, the statement now follows from the definition of $\tau_{\alpha}$. \end{proof}

We have an analogous theorem for negative cables:
\begin{theorem}\label{thm:NegCables}
Let $K\subset Y$ be a rationally null-homologous knot and $\spinc\in\SpinC(Y)$. Pick any $M\in [\frac{1}{q}]\Z$. Then there exists a constant $N(M)>0$ so that for all $n>N(M)$ the following holds for each $j>M$:
\begin{equation*}
H_*\left(\mathcal{F}_{\spinc}\left(Y,K_{p,p(-n)+1}, pj+\frac{p(p-1)\lk_{\Q}(K,\lambda_{-n})}{2}-1\right)\right)\cong H_*(\mathcal{F}_{\spinc}(Y,K,j-1)).
\end{equation*}
Furthermore, 
\begin{align*}
H_*\left(\Filt_\spinc\left(Y,K_{p,p(-n)+1}, pj+\frac{p(p-1)\lk_{\Q}(K,\lambda_{-n})}{2}+i\right)\right)\\\cong H_*\left(\Filt_\spinc\left(Y,K_{p,p(-n)+1}, pj+\frac{p(p-1)\lk_{\Q}(K,\lambda_{-n})}{2}+i+1\right)\right)
\end{align*}
for all $i=2,\ldots p-1$.

In particular, 
\begin{equation*}
\tau_{\alpha}(Y,K_{p,p(-n)+1})=\begin{cases} p\tau_\alpha(Y,K)+\frac{p(p-1)\lk_{\Q}(K,\lambda_{-n})}{2}+(p-1) \text{ or }\\
									p\tau_\alpha(Y,K)+\frac{p(p-1)\lk_{\Q}(K,\lambda_{-n})}{2}.\\
									\end{cases}
\end{equation*}
\end{theorem}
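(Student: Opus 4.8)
The plan is to deduce Theorem~\ref{thm:NegCables} from Theorem~\ref{thm:PosCables} by exploiting the symmetry between a knot's positive and negative cables under orientation reversal. Concretely, $K_{p,p(-n)+1}\subset Y$ is the mirror (orientation reversal) of $K_{p,pn-1}\subset -Y$; more precisely, reversing the orientation of the ambient manifold turns a $(p, pn+1)$-cable into a $(p, -pn+1)=(p,p(-n)+1)$-cable, since the sign of the framing parameter is governed by the ambient orientation. I would therefore first recall the behavior of the $\tau$ invariants and the knot filtration under the map $Y\rightsquigarrow -Y$: there is a duality identifying $\HFa(Y)$ with $\HFa(-Y)$, under which the filtration $\Filt_\spinc(Y,K,j)$ is, up to the relevant grading conventions, dual to a cofiltration of $\Filt_\spinc(-Y,K,-j)$, so that in particular the Alexander gradings negate. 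This is the standard mechanism by which cabling results for positive cables pass to negative ones (cf. the corresponding statement in \cite{CableII}).

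The key steps, in order, are: (1) set up the orientation-reversal identification, recording precisely how the Alexander grading, the $\SpinC$ decomposition, and $\tau_\alpha$ transform, and how the integer $n$ in the cabling parameter of $K$ with respect to $\lambda$ relates to that of $K$ with respect to $\lambda$ in $-Y$ — here one must be careful that $\lk_\Q(K,\lambda)$ computed in $-Y$ is $-\lk_\Q(K,\lambda)$ computed in $Y$, which is exactly why $\lk_\Q(K,\lambda_{-n})$ appears in the statement in place of $\nlink=\lk_\Q(K,\lambda_n)$; (2) apply Theorem~\ref{thm:PosCables} to the knot $K\subset -Y$ with the chosen framing, obtaining the filtered homology isomorphisms there; (3) transport those isomorphisms back to $Y$ via the duality of step (1), which reverses the direction of the filtration and hence turns the ``$-i$'' shifts of Theorem~\ref{thm:PosCables} into the ``$+i$'' shifts appearing here, and negates the relevant Alexander gradings, producing the displayed isomorphisms for $K_{p,p(-n)+1}$; (4) read off the value of $\tau_\alpha(Y,K_{p,p(-n)+1})$ from the resulting filtration, again using that $\HFKa$ is supported in a finite band of Alexander gradings of width $2\lVert K\rVert_Y+1$, so that for $M$ sufficiently negative the filtration stabilizes and $\tau$ is detected by the appropriate exterior generator.

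Alternatively — and perhaps cleaner to write — one can simply redo the proof of Theorem~\ref{thm:PosCables} verbatim with $n$ replaced by $-n$ throughout: the Heegaard diagram $H(p,-n)$ is obtained from $H$ by winding the meridian $p-1$ times along $\lambda_{-n}=\lambda-n\mu$ instead of $\lambda_n$, the winding region now has the opposite handedness, and the model multiplicity computations in Figure~\ref{fig:windingregion} go through with the signs of the relevant local contributions reversed. This changes $\nlink$ to $\lk_\Q(K,\lambda_{-n})$ in Lemma~\ref{lemma:AgradingOutermost}(3), and reverses the monotonicity of the $A'$-grading relative to the $A$-grading along the exterior generators (the analogue of Lemma~\ref{lemma:AgradingExterior} now has $A'(\y,x_i)-A'(\y,x_{i+1})=-(p-1)$), which is precisely what flips the ``$-i$'' in the filtration-stabilization statement to ``$+i$.'' Either route reduces the theorem to bookkeeping already carried out in the positive case.

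The main obstacle I anticipate is getting all the sign conventions consistent — in particular, tracking how the ambient orientation, the orientation of $K$, the framing $\lambda$, and the convention for the sign of the cabling slope interact, and confirming that the net effect is exactly the replacement $\lk_\Q(K,\lambda_n)\mapsto \lk_\Q(K,\lambda_{-n})$ together with the reversal of the inequalities governing which exterior generator computes $\tau$. None of the analytic input (holomorphic disk counts, the finiteness of the support of $\HFKa$, Proposition~1.3 of \cite{Hedden-Levine}) changes, so once the conventions are pinned down the argument is a direct transcription of the proof of Theorem~\ref{thm:PosCables}.
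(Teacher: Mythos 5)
Your overall strategy is exactly what the paper intends: it states Theorem~\ref{thm:NegCables} with the sentence ``We have an analogous theorem for negative cables'' and supplies no further argument, so the expected proof is precisely your second route --- rerun the proof of Theorem~\ref{thm:PosCables} with the winding along $\lambda_{-n}$ and the opposite--handed winding region. Your observation that the multiplicity computation in the winding region then replaces $\nlink=\lk_\Q(K,\lambda_n)$ by $\lk_\Q(K,\lambda_{-n})$ in Lemma~\ref{lemma:AgradingOutermost}(3) is correct and is the essential content. The orientation--reversal route you sketch first also works in principle, but requires carefully matching the paper's filtration conventions with their duals on $-Y$, so it is not obviously shorter; the direct route is what the paper has in mind.

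One specific claim in your sketch is likely off as stated, though. You assert that the analogue of Lemma~\ref{lemma:AgradingExterior} has $A'(\y,x_i)-A'(\y,x_{i+1})=-(p-1)$ for odd $i$. If the other two differences of that lemma are kept as written and only this one reverses sign, the $A'$ values along the exterior generators $x_0,x_1,x_2,\dots$ would be $\maxa',\ \maxa'-1,\ \maxa'+p-2,\dots$, which is not monotone and in particular would dethrone $(\y,x_0)$ as the top filtered generator --- inconsistent with Lemma~\ref{lemma:AgradingOutermost}(3), which still makes $(\y,x_0)$ the outermost point in the negative case. What actually changes in the opposite--handed winding region is not the sign but the \emph{placement} of the two step sizes: the $A'$ decrements occur in the order $(p-1),\,1,\,(p-1),\,1,\dots$ instead of $1,\,(p-1),\,1,\,(p-1),\dots$, i.e.\ the analogue of Lemma~\ref{lemma:AgradingExterior} should swap which of the steps is $1$ and which is $p-1$ while keeping both differences nonnegative. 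That reshuffling is what shifts the plateau in the filtration from the levels $pj+\tfrac{p(p-1)\nlink}{2}-i$ (Theorem~\ref{thm:PosCables}) to $pj+\tfrac{p(p-1)\lk_\Q(K,\lambda_{-n})}{2}+i$ (Theorem~\ref{thm:NegCables}) for $i=2,\dots,p-1$. You correctly flag the sign bookkeeping as the main risk; this is precisely the point where the bookkeeping goes slightly differently than you propose, and it is worth redoing the model multiplicity computation for $H(p,-n)$ explicitly (as in Figure~\ref{fig:windingregion}) rather than inferring the signs by symmetry. Everything else --- Lemma~\ref{lemma:AgradingExterior2}, Lemma~\ref{lemma:maxa}, the finite Alexander support of $\HFKa$, and the passage to $\tau_\alpha$ --- carries over without modification.
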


Combining Theorems \ref{thm:PosCables} and \ref{thm:NegCables} with the following crossing change inequality for $\tau$, we bound the $\tau$ invariants of $K_{p,pn+1}$ in terms of the $\tau$ invariants of $K$ for every integer $n$. 

\begin{prop}\cite[Proposition 5.12]{RelativeAdjunction}\label{prop:CrossingChange}
If rationally null-homologous links $L_-, L_+\subset Y$ differ by changing a single negative crossing in $L_-$ to a positive crossing in $L_+$, then $$\tau_{\alpha\otimes \Theta}(L_-)\leq \tau_{\alpha\otimes \Theta}(L_+)\leq \tau_{\alpha\otimes \Theta}(L_-)+1.$$
\end{prop}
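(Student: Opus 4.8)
The plan is to reduce to the case of a single knot, bound the difference $|\tau_\gamma(K_+)-\tau_\gamma(K_-)|$ by a genus-one cobordism in a product, and then sharpen this to the asserted one-sided inequality using a negative-definite crossing-change cobordism together with the blow-up formula. First I would pass to knotifications: a crossing change on a link $L$ is supported in a ball disjoint from the arcs used to form $\kappa(L)$, so $\kappa(L_-)$ and $\kappa(L_+)$ are rationally null-homologous \emph{knots} in $Y'=Y\#^{|L|-1}\SoneStwo$ differing by changing a single negative crossing to a positive one, and $\tau_{\alpha\otimes\Theta}(L_\pm)=\tau_{\alpha\otimes\Theta}(Y',\kappa(L_\pm))$ by Definition \ref{defn:linktau}. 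Writing $\gamma=\alpha\otimes\Theta\in\HFa(Y')$, it therefore suffices to prove, for rationally null-homologous knots $K_\pm\subset Y'$ related by a negative-to-positive crossing change and any nontrivial $\gamma$, that $\tau_\gamma(K_-)\le\tau_\gamma(K_+)\le\tau_\gamma(K_-)+1$.

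For the upper bound, the isotopy of $Y'$ realizing the crossing change sweeps out an immersed annulus from $K_-$ to $K_+$ in $Y'\times[0,1]$ with a single double point; resolving the double point by a finger move produces an embedded connected genus-one cobordism $\Sigma\subset Y'\times[0,1]$ from $K_-$ to $K_+$. The cobordism map of the product is the identity, so it matches $\gamma$ with $\gamma$; feeding $\Sigma$ into the relative adjunction inequality of \cite{RelativeAdjunction} — in which the ambient cobordism is a product, so $[\Sigma]$ is torsion and $b_2^+=0$ — and running it in both directions yields $|\tau_\gamma(K_+)-\tau_\gamma(K_-)|\le g(\Sigma)=1$, and in particular $\tau_\gamma(K_+)\le\tau_\gamma(K_-)+1$.

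For the lower bound, I would realize the reverse crossing change, taking $K_+$ to $K_-$ (a positive-to-negative crossing change), by surgery on an unknot $c$ in a ball about the crossing with the negative induced framing. This returns the ambient manifold $Y'$, and its trace is a two-handle cobordism $W\colon Y'\to Y'$ diffeomorphic to $(Y'\times[0,1])\,\#\,\CPbar$. Pushing the product $K_+\times[0,1]$ across the handle gives an embedded genus-zero cobordism surface $\Sigma'\subset W$ from $K_+$ to $K_-$, whose relative self-intersection and $c_1$-pairing are controlled, of a fixed sign, by the local geometry of the crossing. Since now $b_2^+(W)=0$, the relative adjunction inequality applies: for the $\SpinC$ structure on $W$ with $c_1^2=-1$, the blow-up formula identifies the induced map on $\HFa(Y')$ with the identity, fixing $\gamma$, while its filtered refinement shifts the two knot filtrations by $\tfrac12\bigl(\langle c_1,[\Sigma']\rangle+[\Sigma']\cdot[\Sigma']\bigr)+g(\Sigma')\le 0$ for the appropriate choice of $\SpinC$ structure; comparing the filtration levels of a cycle representing $\gamma$ then gives $\tau_\gamma(K_-)\le\tau_\gamma(K_+)$. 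Combining the two bounds proves the proposition.

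The main obstacle is the bookkeeping in the last step. Because $K_\pm$ is only rationally null-homologous, $[\Sigma']$ must be treated as a class in $H_2(W,\partial W;\Q)$ and its self-intersection computed through a rational Seifert surface and the relative intersection pairing; moreover the orientation conventions relating ``positive-to-negative crossing change'' to the negative-definite handle attachment must be pinned down so that the filtration shift has the correct sign. One must also check the precise filtered refinement of the blow-up formula and that the hypotheses of the relative adjunction inequality of \cite{RelativeAdjunction} hold for both cobordisms above. A variant that avoids four-manifold input replaces the last step by the skein exact triangle relating $K_-$, $K_+$ and their oriented resolution $K_0$, refined to the Alexander-filtered complexes — but tracking the filtration levels in the resulting mapping cone is comparably delicate, so I would lead with the cobordism argument.
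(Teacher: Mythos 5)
The paper does not prove this proposition; it cites it directly from Proposition 5.12 of the authors' earlier paper \cite{RelativeAdjunction}, so there is no internal proof to compare against. Your strategy --- reduce to knotifications, extract $|\tau_\gamma(K_+)-\tau_\gamma(K_-)|\le 1$ from the genus-one resolution of the singular trace in the product, then sharpen to $\tau_\gamma(K_-)\le\tau_\gamma(K_+)$ via a genus-zero annulus in a $\CPbar$ blow-up together with the relative adjunction inequality and the blow-up formula --- is the standard one for results of this type and is almost certainly the route taken in \cite{RelativeAdjunction}.

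A couple of details are worth tightening. In the product cobordism, $[\Sigma_{S_0,S_1}]^2$ and $\langle c_1(\spinct),[\Sigma_{S_0,S_1}]\rangle$ vanish not because $[\Sigma]$ is torsion (it need not be, since $Y'$ contains $S^1\times S^2$ summands), but because the intersection pairing on $H_2(Y'\times[0,1];\Q)$ is identically zero and the pulled-back $\SpinC$ structure has torsion $c_1$ on $Y'$. In the blow-up step, the product annulus $\Sigma'$ can be made disjoint from the exceptional sphere, so $[\Sigma']^2$ and $\langle c_1(\spinct),[\Sigma']\rangle$ are literally zero; there is no ``controlled sign'' to worry about. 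The one genuinely load-bearing convention check is that the positive-to-negative crossing change corresponds to a $-1$-framed (hence $\CPbar$) blow-up rather than a $+1$-framed one, since only the negative-definite blow-up formula is what guarantees the relevant cobordism map on $\HFa$ is an isomorphism for the $\SpinC$ structure with $c_1=\pm E$. Your assertion to this effect is correct, and with these points cleaned up the argument is complete.
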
 

\begin{theorem}\label{thm:CableBounds}
Let $K\subset Y$ be rationally null-homologous, and let $p$ be a positive integer. For all integers $n$, 
$$p\tau_\alpha(K)+\frac{p(p-1)}{2}\nlink\leq \tau_\alpha(K_{p,pn+1}) \leq p\tau_\alpha(K)+\frac{p(p-1)}{2}\nlink+(p-1).
$$
\end{theorem}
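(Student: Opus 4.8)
The plan is to interpolate between the two extreme cables computed in Theorems \ref{thm:PosCables} and \ref{thm:NegCables} using the crossing-change inequality of Proposition \ref{prop:CrossingChange}. First I would fix the positive integer $p$ and observe that, for a fixed knot $K$ with framing $\lambda$, the family of cables $\{K_{p,pm+1}\}_{m\in\Z}$ interpolates through crossing changes: passing from $K_{p,pm+1}$ to $K_{p,p(m+1)+1}$ amounts to inserting a full twist on $p$ parallel strands inside the solid torus neighborhood of $K$, which can be realized as a sequence of $\binom{p}{2}$ crossing changes (all of the same sign), each of which is a crossing change of the satellite link in $Y$. Hence Proposition \ref{prop:CrossingChange}, applied iteratively, yields monotonicity: $\tau_\alpha(K_{p,pm+1})\le \tau_\alpha(K_{p,p(m+1)+1})\le \tau_\alpha(K_{p,pm+1})+\binom{p}{2}$ for every $m$, and more generally $\tau_\alpha(K_{p,pm+1})$ is nondecreasing in $m$.

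Next I would pin down the value of $\tau_\alpha(K_{p,pm+1})$ for $m$ sufficiently large and positive, and for $m$ sufficiently large and negative, using Theorems \ref{thm:PosCables} and \ref{thm:NegCables}. For large positive $n$, Theorem \ref{thm:PosCables} gives $\tau_\alpha(K_{p,pn+1})=p\tau_\alpha(K)+\tfrac{p(p-1)}{2}\nlink+\epsilon_+$ with $\epsilon_+\in\{0,p-1\}$; by Proposition \ref{prop:RatSS}(2), $\nlink = \lk_\Q(K,\lambda)+n$, so this reads $p\tau_\alpha(K)+\tfrac{p(p-1)}{2}(\lk_\Q(K,\lambda)+n)+\epsilon_+$. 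Similarly, Theorem \ref{thm:NegCables} gives, for large positive $n$, that $\tau_\alpha(K_{p,p(-n)+1})=p\tau_\alpha(K)+\tfrac{p(p-1)}{2}\lk_\Q(K,\lambda_{-n})+\epsilon_- = p\tau_\alpha(K)+\tfrac{p(p-1)}{2}(\lk_\Q(K,\lambda)-n)+\epsilon_-$ with $\epsilon_-\in\{0,p-1\}$. Thus, as $m$ ranges over all integers, the "expected" leading term of $\tau_\alpha(K_{p,pm+1})$ is $p\tau_\alpha(K)+\tfrac{p(p-1)}{2}(\lk_\Q(K,\lambda)+m)$, and the known endpoints confirm that at the extremes the correction term lies in the window $[0,p-1]$.

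Now I would run the squeeze argument for an arbitrary integer $n_0$. Choose $N_+\gg 0$ and $N_-\gg 0$ large enough that both Theorems \ref{thm:PosCables} and \ref{thm:NegCables} apply at $N_+$ and $-N_-$ respectively, and also $-N_- \le n_0 \le N_+$. Monotonicity in $m$ gives
\[
\tau_\alpha(K_{p,p(-N_-)+1}) \;\le\; \tau_\alpha(K_{p,pn_0+1}) \;\le\; \tau_\alpha(K_{p,pN_++1}).
\]
Moreover, applying Proposition \ref{prop:CrossingChange} across the $\binom{p}{2}(N_+ - n_0)$ crossing changes connecting $K_{p,pn_0+1}$ to $K_{p,pN_++1}$ — noting that a full twist on $p$ strands shifts the baseline $\tfrac{p(p-1)}{2}$-term by exactly $\binom{p}{2}=\tfrac{p(p-1)}{2}$ per unit increase in $m$ — gives
\[
\tau_\alpha(K_{p,pN_++1}) - \tfrac{p(p-1)}{2}(N_+ - n_0) \;\le\; \tau_\alpha(K_{p,pn_0+1}) \;\le\; \tau_\alpha(K_{p,pN_++1}).
\]
Substituting the explicit value of $\tau_\alpha(K_{p,pN_++1})$ from Theorem \ref{thm:PosCables} into the left inequality yields the lower bound $p\tau_\alpha(K)+\tfrac{p(p-1)}{2}\nlink \le \tau_\alpha(K_{p,pn_0+1})$ (here $\nlink$ means $\lk_\Q(K,\lambda_{n_0})=\lk_\Q(K,\lambda)+n_0$), since $\epsilon_+\ge 0$. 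Symmetrically, comparing downward to $K_{p,p(-N_-)+1}$ via the crossing-change inequality in the other direction, and substituting the value from Theorem \ref{thm:NegCables}, gives the upper bound $\tau_\alpha(K_{p,pn_0+1}) \le p\tau_\alpha(K)+\tfrac{p(p-1)}{2}\nlink + (p-1)$, since $\epsilon_-\le p-1$. This establishes the claimed two-sided bound for every integer $n$.

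\textbf{Main obstacle.} The delicate point is the bookkeeping of how a full twist on $p$ strands interacts simultaneously with the $\tau$-shift from crossing changes (Proposition \ref{prop:CrossingChange}, worth $+1$ per crossing, total $\binom{p}{2}$) and with the change of framing parameter $n \mapsto n+1$, which shifts the normalization term $\tfrac{p(p-1)}{2}\nlink$ by precisely $\tfrac{p(p-1)}{2} = \binom{p}{2}$ via Proposition \ref{prop:RatSS}(2); these two shifts must cancel for the window $[0,p-1]$ to be preserved along the entire interpolation, and one must check the signs of the crossings are uniform (all positive when increasing $m$) so that Proposition \ref{prop:CrossingChange} applies in the correct direction at each step. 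Once this is confirmed, the squeeze is immediate.
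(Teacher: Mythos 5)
Your proposal is correct and follows essentially the same route as the paper's proof: anchor at large positive and large negative framing parameters via Theorems \ref{thm:PosCables} and \ref{thm:NegCables}, and interpolate through the family $K_{p,pm+1}$ using $\binom{p}{2}$ crossing changes per unit of $m$ via Proposition \ref{prop:CrossingChange}, observing that the $\tau$-shift from the crossing changes exactly cancels the $\frac{p(p-1)}{2}\nlink$ normalization shift coming from Proposition \ref{prop:RatSS}(2). The paper states this more tersely but the argument is the same; your "Main obstacle" paragraph correctly identifies the cancellation as the crux.
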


\begin{proof} The proof essentially follows the proof of Theorem 1.2 of \cite{CableII}. Theorems \ref{thm:PosCables} and \ref{thm:NegCables} imply the result for $|n|$ sufficiently large. To obtain the result for the intermediate values of $n$, we first observe that the knot $K_{p,pn+1}$ can be changed into $K_{p,p(n-1)+1}$ by a sequence of $\frac{p(p-1)}{2}$ positive to negative crossing changes. Applying the crossing change inequality in Proposition \ref{prop:CrossingChange}, we work down from a positive cable with $n>>0$ to obtain the lower bound for all $n$. To obtain the upper bound, we work the other way, starting from a negative cable with $n<<0$, applying the crossing change formula as we go.
\end{proof}

\section{$\tau$ invariants of braid pattern links} 
We begin this section by proving a genus bound for cobordisms between links. This bound essentially follows from the main result of \cite{RelativeAdjunction}, once we construct a cobordism between the knotifications of the links. In the second part, we combine this genus bound with Theorem \ref{thm:CableBounds} to prove Theorem \ref{thm:BPBounds}. 

\subsection{Genus bounds for link cobordisms} Given a cobordism between a pair of links, we construct a cobordism between their knotifications. We use a combination of two different constructions, both of which are employed simultaneously to the surface and the ambient 4-manifold in which it is embedded. The first is a \emph{self arc sum}. The second  utilizes handle additions attached to the incoming and outgoing ends of the  cobordism.

To begin, a path cobordism between pointed 3-manifolds $(Y_0,p_0)$ and $(Y_1,p_1)$ is a pair $(W,\Gamma)$ where $W$ is a cobordism and $\Gamma$ is an embedded path in $W$ from $p_0$ to $p_1$. Given a second path cobordism $(W',\Gamma')$ between pointed 3-manifolds $(Y_0',p_0')$ and $(Y_1',p_1')$, we define the \emph{arc sum} to be the 4-manifold: $$ W\otimes W' := W\setminus\nu(\Gamma)\bigcup_{S^2\times [0,1]} W'\setminus\nu(\Gamma')$$ obtained by removing tubular neighborhoods of each arc and gluing $W$ and $W'$ along the $S^2\times [0,1]$ boundaries of the arcs. The arc sum is a cobordism from $Y_0\#Y_0'$ to $Y_1\#Y_1'$.  In  \cite[Theorem 3.2]{RelativeAdjunction} the authors showed that cobordism-induced maps on Floer homology satisfy a product formula under arc sums:
\begin{equation}\label{eq:kunnetharc}
    F_{W\otimes W'} = F_{W}\otimes F_{W'}.
\end{equation}

Given additional basepoints, a path cobordism from one \emph{doubly} pointed 3-manifold $(Y_0,p_0,p'_0)$ to another $(Y_1,p_1,p'_1)$ is a pair $(W,\Gamma)$ consisting of a cobordism $W$ and now a \emph{pair} of disjoint embedded arcs $\Gamma=\gamma\cup\gamma'$ with endpoints $\gamma(i)=p_i$ and $\gamma'(i)=p'_i$ for $i=0,1$. We define the \emph{self arc sum} to be the manifold $W_{\Gamma}$ formed by removing tubular neighborhoods $\nu(\gamma)$ and $\nu(\gamma')$ of both arcs and identifying the resulting $S^2\times [0,1]$ boundaries via an orientation reversing diffeomorphism. Equivalently, the self arc sum is the 4-manifold obtained by removing tubular neighborhoods of both arcs and attaching a copy of $S^2\times [0,1]\times [0,1]$ so that $S^2\times \{0\}\times [0,1]$ is identified with the boundary of $\nu(\gamma)$ and $S^2\times \{1\}\times [0,1]$ with the boundary of $\nu(\gamma')$: $$W_\Gamma := W\setminus\nu(\gamma\cup \gamma') \bigcup_{S^2\times \{0\}\times [0,1]\sqcup S^2\times \{1\}\times [0,1]} S^2\times [0,1]\times [0,1]$$

As a 4-manifold the self arc sum $W_\Gamma$ is a cobordism from the self connected sum of $Y_0$ to the self connected sum of $Y_1$, where the self connected sum of a manifold is constructed by removing two balls in a path component and identifying the resulting  boundary spheres with an orientation-reversing diffeomorphism. Note that the self connected sum of a 3-manifold $Y$ is diffeomorphic to the $Y\#\SoneStwo$. Similarly, for the self arc sum we have:
\begin{lemma}\label{lem:SelfArcSum} Let $(W,\Gamma)$ be a path cobordism with a pair of paths embedded in the same  component of $W$ and whose initial (resp. terminal) points are in the same  component of the incoming (resp. outgoing) boundary. Then the self arc sum $W_{\Gamma}$ is diffeomorphic to the (single) arc sum $$W \otimes (S^1\times S^2\times [0,1]).$$
\end{lemma}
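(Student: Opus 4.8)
The plan is to argue that both 4-manifolds are built from the same piece $W\setminus\nu(\Gamma)$ by gluing in a ``tube'' connecting the two boundary spheres, and then to identify what that tube is in each case. First I would set up the common piece: after removing $\nu(\gamma)$ and $\nu(\gamma')$, the manifold $W\setminus\nu(\gamma\cup\gamma')$ has, in its boundary, two new copies of $S^2\times[0,1]$, coming from the two arcs (here $[0,1]$ is the cobordism direction, since each arc runs from the incoming to the outgoing end). The self arc sum glues these two copies together directly via an orientation-reversing diffeomorphism; equivalently, by the reformulation given just before the lemma statement, it glues in a copy of $S^2\times[0,1]\times[0,1]$ along $S^2\times\{0,1\}\times[0,1]$. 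The single arc sum $W\otimes(S^1\times S^2\times[0,1])$, on the other hand, removes a neighborhood $\nu(\Gamma'')$ of an arc $\Gamma''$ in $S^1\times S^2\times[0,1]$ running between the two ends and glues $W\setminus\nu(\gamma)$, say, to $(S^1\times S^2\times[0,1])\setminus\nu(\Gamma'')$ along one $S^2\times[0,1]$.

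The key step is then the following topological observation about $S^1\times S^2\times[0,1]$: if one removes a neighborhood of a properly embedded arc $\Gamma''$ joining the incoming $S^1\times S^2$ to the outgoing $S^1\times S^2$, the complement is diffeomorphic to $S^2\times[0,1]\times[0,1]$, i.e.\ to $(S^2\times[0,1])\times[0,1]$. Indeed, $S^1\times S^2\times[0,1]$ is the product cobordism on $S^1\times S^2$; drilling out a vertical arc is the same as drilling out a point from $S^1\times S^2$ and crossing with $[0,1]$; and $S^1\times S^2$ minus a ball is $S^1\times S^2\setminus B^3\cong S^1\times D^2\,\#\,\cdots$ — more directly, $S^1\times S^2$ minus an open ball deformation retracts appropriately and is diffeomorphic to $D^2\times S^1$ minus... — the cleanest route is: $S^1\times S^2$ is the boundary of $S^1\times D^3$, and puncturing it once gives a manifold diffeomorphic to $[0,1]\times S^2$ via the standard identification of $(S^1\times S^2)\setminus B^3$; hence $(S^1\times S^2\times[0,1])\setminus\nu(\Gamma'')\cong S^2\times[0,1]\times[0,1]$, with the two $S^2\times[0,1]$ factors of its boundary that meet $\partial\nu(\Gamma'')$ being exactly $S^2\times\{0\}\times[0,1]$ and $S^2\times\{1\}\times[0,1]$. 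Plugging this into the definition of the arc sum shows $W\otimes(S^1\times S^2\times[0,1])$ is obtained from $W\setminus\nu(\gamma)$ by gluing in $S^2\times[0,1]\times[0,1]$ along one of its $S^2\times[0,1]$ faces — and since the other face of this glued-in block is then an internal $S^2\times[0,1]$ which can be used to ``re-drill'' the second arc, one recognizes the result as precisely the self arc sum $W_\Gamma$.

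To make the last identification honest, I would be careful about which boundary components get glued where: in the self arc sum the block $S^2\times[0,1]\times[0,1]$ is glued to $W\setminus\nu(\gamma\cup\gamma')$ along \emph{both} of its $S^2\times\{0,1\}\times[0,1]$ faces, whereas in $W\otimes(S^1\times S^2\times[0,1])$ only \emph{one} face of the corresponding block (the punctured $S^1\times S^2\times[0,1]$) is glued to $W$, the other face being the ``far'' sphere of the summand. The reconciliation is that cutting the summand $S^1\times S^2\times[0,1]$ open along an $S^2\times[0,1]$ that separates $\Gamma''$ from the rest recovers exactly the picture of $W_\Gamma$: removing the second arc $\gamma'$ from $W$ and gluing in the block along both ends is the same as removing one arc, gluing in a once-punctured $S^1\times S^2\times[0,1]$, since the once-punctured $S^1\times S^2$ is $S^2\times[0,1]$ with its two ends re-identified after one more puncture. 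I expect the main obstacle to be purely bookkeeping: tracking the orientations on the gluing diffeomorphisms (the self arc sum uses an orientation-reversing identification of the two $S^2\times[0,1]$'s, and one must check this matches the orientation conventions implicit in the connected-sum summand $S^1\times S^2$) and verifying that the arcs/spheres are glued in the claimed boundary components. None of this requires new ideas — it is the standard fact that a self-connected-sum along an arc equals a connected sum with $S^1\times S^2$, promoted from manifolds to cobordisms — but writing it so that the reader can check the gluing data is the part that takes care.
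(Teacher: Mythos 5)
The central topological identification you rely on is false: $(S^1\times S^2\times[0,1])\setminus\nu(\Gamma'')$ is \emph{not} diffeomorphic to $S^2\times[0,1]\times[0,1]$. For a vertical arc $\Gamma''$ one has $(S^1\times S^2\times[0,1])\setminus\nu(\Gamma'')\cong(S^1\times S^2\setminus B^3)\times[0,1]$, which has fundamental group $\Z$, while $S^2\times[0,1]\times[0,1]$ is simply connected. (Equivalently, the once-punctured $S^1\times S^2$ has a single boundary sphere, whereas $S^2\times[0,1]$ has two; the sentence ``puncturing it once gives a manifold diffeomorphic to $[0,1]\times S^2$'' is where the proof breaks.) This means the ``key step'' paragraph does not establish anything, and the block you propose to glue into $W\setminus\nu(\gamma)$ is the wrong manifold.

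Your final paragraph half-recognizes this and tries to repair it by ``cutting the summand open along an $S^2\times[0,1]$ that separates $\Gamma''$.'' The local model $S^1\times S^2\setminus B^3 = (S^2\times[0,1]\setminus B^3)/{\sim}$ is correct, but the step that would finish the argument is missing: after this cut, you have an $S^2\times[0,1]$ sitting inside the glued-in block, and you need to identify it with $\partial\nu(\gamma')\subset W$. That identification is exactly the nontrivial content of the lemma — one needs a proper isotopy in $W$ carrying this cut sphere-tube onto the boundary of the tubular neighborhood of the second arc, which is not automatic and is not supplied by anything you wrote. The paper's proof constructs precisely this: using a Morse function $f:W\to[0,1]$ for which $\gamma$ and $\gamma'$ are both flow lines, it flows a path $\delta_0\subset Y_0$ joining $p_0$ to $p_0'$ to obtain a properly embedded $B^3\times[0,1]\hookrightarrow W$ containing $\nu(\gamma)\cup\nu(\gamma')$. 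Inside that chart the calculation ``$B^3$ minus two sub-balls, plus a connecting tube, equals $S^1\times S^2$ minus a ball'' becomes a genuine diffeomorphism, level by level, and the rest follows by gluing in the complement $W\setminus(B^3\times[0,1])$. Without producing such a chart (or an equivalent proper isotopy of the two arcs into a standard position), the ``re-drill the second arc'' step in your argument is a claim, not a proof.
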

\begin{proof}
To begin, we construct a  Morse function $f:W\to [0,1]$ adapted to the paths $\gamma$ and $\gamma'$ in a particular way. Let $\gamma:[0,1]\to W$ and $\gamma':[0,1]\to W$ denote the embeddings of each arc in $W$. Define $f$ first on a neighborhood of the arcs so that the compositions $f\circ\gamma$ and $f\circ\gamma'$ equal the identity map from $[0,1]$ to itself. Then, extend $f$ to a Morse function on all of $W$, and pick a Riemannian metric on $W$ with respect to which we consider the  gradient vector field associated to $f$. 

Now choose any path $\delta_0$ in $Y_0$ between the points $p_0$ and $p'_0$ which lies in the complement of the (non-separating) subset of $Y_0$ which limits to a critical point of $f$ under the flow of its gradient vector field. With such a choice, $\delta_0$ will flow in one unit of time to a path $\delta_1$ in $Y_1$. The restriction  of the flow to $\delta_0$  yields a smooth family of embedded paths $\delta_t$, $t\in [0,1]$ in $W$ beginning at $\delta_0$ and ending at $\delta_1$ 

The path $\delta_0$ has a tubular neighborhood in $Y_0$ that we can identify with $$[0,1]\times D^2\cong B^3\subset Y_0.$$ This identification, under the flow, induces an identification of a neighborhood of each arc $\delta_t$ with a smoothly embedded $B^3$ in the level set $Y_t=f^{-1}(t)$ (note that while critical levels will not be smooth 3-manifolds, $\delta_t$ and its neighborhood in $Y_t$ will lie in an open subset of $Y_t$ which is smooth). Taken together, these neighborhoods yield a smooth and proper embedding of $B^3\times [0,1]$ in $W$. 

Focusing on the embedded copy of $B^3\times [0,1]$ in $W$, to form $W_\Gamma$ one should remove $\nu(\gamma)\cup\nu(\gamma')$ from $B^3\times [0,1]$ and attach a copy of $S^2\times [0,1]\times [0,1]$. Note that removing two small balls from $B^3$ and attaching $S^2\times [0,1]$ yields a 3-manifold diffeomorphic to $S^1\times S^2\setminus B^3$. Doing this at each level set of $B^3\times [0,1]$ we see that, $$B^3\times [0,1]\setminus(\nu(\gamma)\cup \nu(\gamma'))\bigcup S^2\times [0,1]\times [0,1]$$ is diffeomorphic to $\SoneStwo\times [0,1]\setminus (B^3\times [0,1])$.
\end{proof}

For the second construction, involving handle additions, we first recall some notation from \cite[Section 5.2]{RelativeAdjunction}. A class $\Theta$ in $\HFa(\#^\ell \SoneStwo)$ can be written as a sum of classes of the form $\Theta=\theta_{\epsilon_1}\otimes\ldots\otimes\theta_{\epsilon_\ell}$ where $\epsilon_k\in\{+,-\}$. Let $\Theta_{top}$ and $\Theta_{bot}$ denote the classes where $\epsilon_k=+$, respectively $-$, for all $k$.

Starting with a 3-manifold $Y$, construct a cobordism to $Y\#^\ell \SoneStwo$ by attaching $\ell$ 4-dimensional 1-handles to $Y\times [0,1]$. The resulting 4-manifold is diffeomorphic to $Y\times[0,1]\bignatural^\ell S^1\times B^3$ and we denote it by $W_{top}$.

Similarly, starting with $Y\#^\ell\SoneStwo$ construct a cobordism to $Y$ by attaching $\ell$ 2-handles to $Y\#^\ell\SoneStwo\times [0,1]$ along circles $S^1\times \{p\}$ in the $l$ different $S^1\times S^2$ summands (note that such circles have a unique framing, up to isotopy). The resulting 4-manifold is diffeomorphic to $Y\times [0,1]\bignatural^\ell S^2\times D^2$ and we denote it by $\overline{W}_{top}$.
 
\begin{lemma} \label{lem:Endmaps}
For each nontrivial class $\alpha\in\HFa(Y,\spinc)$, there is a unique $\SpinC$ structure $\spinct\in \SpinC(W_{top})$ for which the map: $$F_{W_{top},\spinct}:\HFa(Y,\spinc)\to \HFa(Y\#^\ell\SoneStwo,\spinc\#\spinc_0)$$ is nontrivial on $\alpha$.  The map carries $\alpha$ to $\alpha\otimes \Theta_{top}$. Similarly, there is a unique $\SpinC$ structure $\bar{\spinct}\in \SpinC(\overline{W}_{top})$ for which the map:: $$F_{\overline{W}_{top},\bar{\spinct}}:\HFa(Y\#^\ell\SoneStwo,\spinc\#\spinc_0)\to \HFa(Y,\spinc)$$ has $\alpha$ in its image. The map carries $\alpha\otimes \Theta_{top}$ to $\alpha$. 
\end{lemma}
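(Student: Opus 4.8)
The plan is to reduce the statement to the behavior of $1$-handle and $2$-handle maps over $S^1\times S^2$ summands, which is already well-understood in Heegaard Floer theory. I would first treat the case $\ell = 1$ and then bootstrap to arbitrary $\ell$ by composing with the arc-sum/connected-sum product formula \eqref{eq:kunnetharc}, since $W_{top}$ for general $\ell$ is the boundary connected sum of $\ell$ copies of the $\ell=1$ cobordism (and likewise for $\overline{W}_{top}$), so $F_{W_{top}} = \mathrm{Id}_{\HFa(Y)}\otimes F_{W_{top}^{(1)}}^{\otimes \ell}$ up to the identification of $\HFa(\#^\ell \SoneStwo)$ with $\HFa(\SoneStwo)^{\otimes \ell}$. (Here $\HFa(\SoneStwo)\cong \F_{(1/2)}\oplus \F_{(-1/2)}$, generated by $\theta_+$ and $\theta_-$.)

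For $\ell = 1$: the cobordism $W_{top}$ is a single $4$-dimensional $1$-handle attachment, and $\overline{W}_{top}$ is a single $2$-handle attached along $S^1\times\{pt\}\subset Y\#\SoneStwo$. Ozsv\'ath--Szab\'o's computation of $1$- and $2$-handle maps (from \cite{HolDiskThree}, or as recalled in \cite{RelativeAdjunction}) gives: the $1$-handle map in the torsion $\SpinC$ structure $\spinc_0$ on the new $\SoneStwo$ summand sends $\alpha\mapsto \alpha\otimes\theta_+$ (it is, up to sign/grading conventions, the map dual to the top class), while the $2$-handle map sends $\alpha\otimes\theta_+\mapsto \alpha$ and $\alpha\otimes\theta_-\mapsto 0$. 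The non-torsion $\SpinC$ structures on $Y\#\SoneStwo$ all have trivial $\HFa$ (since $b_1$ increases), so there is no ambiguity there; among $\SpinC$ structures restricting to $\spinc$ on $Y$ and $\spinc_0$ on the summand, the claim is that exactly one on $W_{top}$ (resp.\ $\overline{W}_{top}$) gives the nonzero map. This follows because $W_{top}$ has $H^2(W_{top};\Z)\cong H^2(Y;\Z)$ (the $1$-handle kills no homology and creates none in degree $2$), so a $\SpinC$ structure on $W_{top}$ restricting to $\spinc$ is unique; for $\overline{W}_{top}$, $H^2$ has one extra $\Z$ coming from the cocore of the $2$-handle, and the adjunction/energy filtration argument—or directly the grading shift formula—singles out the one $\SpinC$ structure $\bar\spinct$ evaluating to $0$ on the relevant $2$-sphere for which the map is nonzero.

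The main obstacle, I expect, is bookkeeping rather than conceptual: pinning down exactly which $\SpinC$ structure is the distinguished one and verifying \emph{uniqueness} of it (i.e., that the maps in all other $\SpinC$ structures with the prescribed restriction to $Y$ vanish on $\alpha$, resp.\ miss $\alpha$ from their image). For the $2$-handle cobordism this requires knowing that the nonzero summand of the $2$-handle map is supported in a single $\SpinC$ structure—a standard fact, but one must cite it correctly and match orientation/grading conventions with \cite{RelativeAdjunction}. One should also take care that the composite $F_{\overline{W}_{top},\bar\spinct}\circ F_{W_{top},\spinct}$ is the identity (it is, since stacking a $1$-handle and a cancelling $2$-handle is a trivial cobordism), which gives a consistency check on the $\SpinC$ labels and confirms $\alpha\otimes\Theta_{top}\mapsto\alpha$.

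Finally, to assemble the general-$\ell$ statement, I would invoke the product formula \eqref{eq:kunnetharc} (together with the analogous statement for boundary connected sums of cobordisms, which is how $W_{top}$ and $\overline{W}_{top}$ decompose): tensoring the $\ell=1$ maps gives $\alpha\mapsto \alpha\otimes\theta_+\otimes\cdots\otimes\theta_+ = \alpha\otimes\Theta_{top}$ and $\alpha\otimes\Theta_{top}\mapsto\alpha$, with the distinguished $\SpinC$ structures $\spinct$, $\bar\spinct$ being the ones restricting to the distinguished structure on each summand; uniqueness for each factor gives uniqueness for the sum.
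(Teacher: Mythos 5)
Your proof takes essentially the same route as the paper for the $1$-handle cobordism $W_{top}$: the unique extension of $\spinc$ follows from $H^2(W_{top})\cong H^2(Y)$, and the value $\alpha\otimes\Theta_{top}$ comes straight from Ozsv\'ath--Szab\'o's definition of the $1$-handle map. (The paper does not bother separating $\ell=1$ from the general case, but your tensoring argument via the arc-sum/K\"unneth formula is fine.) The difference is in how you handle $\overline{W}_{top}$.

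For $\overline{W}_{top}$ the paper deduces everything from the composition law together with $\overline{W}_{top}\circ W_{top}\cong Y\times[0,1]$, which simultaneously gives both the value on $\alpha\otimes\Theta_{top}$ and the vanishing of the other $\SpinC$-components. You instead try to cite a direct computation of the $2$-handle map and then argue uniqueness of $\bar\spinct$ separately, "by the adjunction/energy filtration argument---or directly the grading shift formula." The grading-shift route does not work here: the extra $H^2(\overline{W}_{top};\Z)\cong\Z^\ell$ is generated by the self-intersection-zero spheres $[S^2_i]$, on which the intersection form vanishes, so $c_1(\bar\spinct')^2$ is the same for every $\SpinC$ structure $\bar\spinct'$ restricting to $\spinc\#\spinc_0$; the grading shift $\bigl(c_1^2-2\chi-3\sigma\bigr)/4$ therefore fails to distinguish them. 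The adjunction/energy-filtration argument can be made to work, but it is not a one-line citation and you leave it at the level of a gesture. What actually closes the gap is the composition law you relegate to a "consistency check" at the end: $F_{\overline{W}_{top},\bar\spinct_2}\circ F_{W_{top},\spinct_1}=\sum_{\spinct:\,\spinct|_{W_{top}}=\spinct_1,\,\spinct|_{\overline{W}_{top}}=\spinct_2}F_{Y\times[0,1],\spinct}$, and since exactly one $\SpinC$ structure on $Y\times[0,1]$ restricts correctly, this forces the desired value for one $\bar\spinct_2$ and vanishing for the rest. So promote the last paragraph from sanity check to the actual argument for $\overline{W}_{top}$, and drop the grading-shift claim.
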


\begin{proof}
The statement about $W_{top}$ follows from the definition of the maps associated to 4-dimensional 1-handle attachments \cite[Section 4.3]{HolDiskFour}. The statement about $\overline{W}_{top}$ can be deduced by from the composition law for cobordism maps, together with the fact that $\overline{W}_{top}\circ W_{top}$ is diffeomorphic to the product $Y\times [0,1]$
\end{proof}


Now we state our genus bound for link cobordisms. Let $W$ be a cobordism between 3-manifolds $Y_0$ and $Y_1$ and $F_{W,\spinct}:\HFa(Y_0,\spinct\lvert_{Y_0})\to \HFa(Y_1,\spinct\lvert_{Y_1})$ the induced map on Floer homology. 

\begin{prop}\label{prop:betterlinkbound}  
 Let  $\Sigma\subset W$ be a smooth cobordism between links $L_0\subset Y_0$ and $L_1\subset Y_1$ for which both $\pi_0(L_i)\to \pi_0(\Sigma)$, $i=0,1$, are surjective. If $F_{W,\spinct}(\alpha)\neq 0$ 
then, 
\begin{equation*}
\langle c_1(\spinct),[\Sigma_{S_0,S_1}]\rangle+[\Sigma_{S_0,S_1}]^2+2\tau_{F_{W,\spinct}(\alpha)\otimes\Theta_{top}}(L_1)-2\tau_{\alpha\otimes\Theta_{top}}(L_0)\leq ||L_0|-|L_1||-\chi(\Sigma).
\end{equation*} 
\end{prop}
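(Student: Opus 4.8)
The plan is to reduce the statement for links to the relative adjunction inequality for \emph{knots} proved in \cite{RelativeAdjunction}, by passing to knotifications. The key construction is to build, from the link cobordism $\Sigma\subset W$, a cobordism $\Sigma_{S_0,S_1}\subset W_{\Gamma}$ between the knotifications $\kappa(L_0)$ and $\kappa(L_1)$, where the ambient $4$-manifold $W_\Gamma$ is obtained from $W$ by the self arc sum operation of Lemma \ref{lem:SelfArcSum}. Concretely, since $\pi_0(L_i)\to\pi_0(\Sigma)$ is surjective, one may choose a collection of disjoint embedded arcs in $\Sigma$ realizing the ``band'' moves used to form the knotification at each end; tubing these together using the self arc sums (one for each band that merges two components into one, i.e. $|L_0|-1$ of them on the incoming end and $|L_1|-1$ on the outgoing end, interpolated across $W$) produces a connected cobordism $\Sigma_{S_0,S_1}$ from $\kappa(L_0)$ to $\kappa(L_1)$ inside the self arc sum $W_\Gamma$. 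By Lemma \ref{lem:SelfArcSum}, $W_\Gamma\cong W\otimes(S^1\times S^2\times[0,1])^{\otimes ?}$; more precisely, the arcs added at the two ends turn $W$ into $W_{top}$ (attaching $1$-handles to reach $Y_0\#^{|L_0|-1}\SoneStwo$) followed by $W$ followed by $\overline W_{top}$-type handles, and the genus count introduced by the tubing changes $\chi$ additively by the number of bands.

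Granting that construction, the proof proceeds as follows. First I would record the Euler characteristic bookkeeping: passing from $\Sigma$ to $\Sigma_{S_0,S_1}$ attaches $(|L_0|-1)+(|L_1|-1)$ bands (when all merges genuinely combine distinct components), but because we are also filling in with the level surfaces of the self arc sum the net effect is $\chi(\Sigma_{S_0,S_1})=\chi(\Sigma)-(|L_0|-1)-(|L_1|-1)$; combined with the term $||L_0|-|L_1||$ appearing in the desired inequality this reorganizes correctly — this is the routine but delicate combinatorial step. Second, I would apply the relative adjunction inequality of \cite{RelativeAdjunction} (its main theorem, for the connected cobordism $\Sigma_{S_0,S_1}$ between the \emph{knots} $\kappa(L_0)$ and $\kappa(L_1)$ in the cobordism $W_\Gamma$, with the $\SpinC$ structure $\spinct$ extended over the $S^1\times S^2$ pieces by $\spinc_0$) to get
\begin{equation*}
\langle c_1(\spinct),[\Sigma_{S_0,S_1}]\rangle+[\Sigma_{S_0,S_1}]^2+2\tau_{G(\alpha\otimes\Theta_{top})}(\kappa(L_1))-2\tau_{\alpha\otimes\Theta_{top}}(\kappa(L_0))\leq -\chi(\Sigma_{S_0,S_1}),
\end{equation*}
where $G$ is the map induced by $W_\Gamma$. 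Third, I would identify $G$ with $F_{W,\spinct}$ tensored with the handle maps: using the K\"unneth/product formula \eqref{eq:kunnetharc} for arc sums together with Lemma \ref{lem:Endmaps}, the composite map on Floer homology induced by $W_\Gamma$ sends $\alpha\otimes\Theta_{top}$ to $F_{W,\spinct}(\alpha)\otimes\Theta_{top}$ (the $1$-handle maps at the incoming end create $\Theta_{top}$, which is carried along, and nothing further happens at the outgoing end because $\Theta_{top}$ is the image class). Finally, unravelling Definition \ref{defn:linktau}, $\tau_{\alpha\otimes\Theta_{top}}(\kappa(L_i))=\tau_{\alpha\otimes\Theta_{top}}(L_i)$, and substituting the Euler characteristic identity yields exactly the claimed bound.

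The main obstacle I anticipate is making the cobordism construction between knotifications precise and checking that it is compatible with \emph{both} ends simultaneously: one must choose the merging arcs on the incoming link $L_0$, on the outgoing link $L_1$, and interpolating arcs across $W$, in a way that the resulting surface $\Sigma_{S_0,S_1}$ is connected, embedded, and has the asserted Euler characteristic, and that the ambient $4$-manifold is genuinely the iterated arc sum to which \eqref{eq:kunnetharc} and Lemma \ref{lem:Endmaps} apply. This requires care when $|L_0|\neq|L_1|$, since then $\Sigma$ already has components that merge or split inside $W$, and the self arc sums must be arranged to be consistent with that topology — this is the source of the asymmetric-looking $||L_0|-|L_1||$ term. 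A secondary technical point is confirming that the relative adjunction inequality of \cite{RelativeAdjunction} applies to the possibly-disconnected-to-connected situation after we have arranged $\Sigma_{S_0,S_1}$ to be connected (which is exactly why surjectivity of $\pi_0(L_i)\to\pi_0(\Sigma)$ is hypothesized), and that the $\SpinC$ and homology-class contributions $\langle c_1(\spinct),[\Sigma_{S_0,S_1}]\rangle$ and $[\Sigma_{S_0,S_1}]^2$ are unchanged by the tubing, since the tubes lie in the $S^1\times S^2$ (and $S^2\times D^2$) summands where $c_1(\spinc_0)=0$ and the self-intersection contribution vanishes.
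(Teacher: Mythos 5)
Your overall strategy is the right one — self arc sums to pass to knotifications, the product formula \eqref{eq:kunnetharc} together with Lemma~\ref{lem:Endmaps} to identify the induced map as $\alpha\otimes\Theta_{top}\mapsto F_{W,\spinct}(\alpha)\otimes\Theta_{top}$, and then the relative adjunction inequality of \cite{RelativeAdjunction} applied to a connected cobordism between the knots $\kappa(L_0)$ and $\kappa(L_1)$. You also correctly flag that the $c_1$ and self-intersection terms are unaffected by the tubing. However, the Euler characteristic bookkeeping you propose is wrong, and the error is precisely where the improvement of this proposition over the naive bound lives.

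You write $\chi(\Sigma_{S_0,S_1})=\chi(\Sigma)-(|L_0|-1)-(|L_1|-1)$, reasoning as if the knotification bands are attached independently at the two ends. That count would give the bound $-\chi(\Sigma)+|L_0|+|L_1|-2$, which is the weaker inequality of Corollary~\ref{cor:LinkCobordisms}, not the one asserted here. The point of the paper's construction is that each self arc sum is performed along a \emph{pair} of long arcs running all the way across $\Sigma$ from $L_0$ to $L_1$, and this surgery is Euler-characteristic-neutral: removing a tubular neighborhood of each of the two arcs increases $\chi$ by $1$ each (cutting along a properly embedded arc adds $1$ to $\chi$), and attaching the two replacement bands decreases $\chi$ by $1$ each, so the net change is zero. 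Only $\ell=\min\{|L_0|,|L_1|\}-1$ such self arc sums are performed. The remaining $||L_0|-|L_1||$ knotification bands cannot be paired up with anything at the other end; they are realized by attaching $||L_0|-|L_1||$ genuine $1$- or $2$-handles at one end and running $\overline\Sigma$ through them, each of which costs $\chi$ exactly $1$. Hence $\chi(\overline\Sigma)=\chi(\Sigma)-||L_0|-|L_1||$, and since $\overline\Sigma$ is a connected cobordism with two boundary circles, $2g(\overline\Sigma)=-\chi(\overline\Sigma)=-\chi(\Sigma)+||L_0|-|L_1||$, which is exactly the right-hand side of the proposition. So the missing ingredient is the observation that pairing the knotification bands across the cobordism makes the surgery $\chi$-free, and the ``reorganize correctly'' step you defer does not in fact go through as you've set it up; you need to change the count of self arc sums from $(|L_0|-1)+(|L_1|-1)$ to $\min\{|L_0|,|L_1|\}-1$ and treat the excess $||L_0|-|L_1||$ components by one-sided handle additions.
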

\begin{proof} Let $\ell=\min\{|L_0|,|L_1|\}-1$. We begin by fixing a collection of $\ell$ pairs of arcs, embedded on $\Sigma$. Each arc should have incoming end on $L_0$ and outgoing end on $L_1$ and, for each pair, the incoming and outgoing ends should lie on distinct link components. The surjectivity condition ensures that we can find such a collection of arcs.  

Now, we form the self-arc sum of $W$ along each pair of arcs. By Lemma \ref{lem:SelfArcSum}, the resulting manifold is diffeomorphic to $W\otimes (\#^l S^1\times S^2)\times [0,1]$. In addition, we form the self arc sum of $\Sigma$; that is, we surger $\Sigma$ along each pair of arcs, removing a small neighborhood of the arcs and replacing it with a pair of bands that pass through one of the $S^1\times S^2\times [0,1]$ factors.
The resulting surface has a knotified link on at least one side. Moreover, its Euler characteristic is unchanged under the surgery, and hence by an abuse of notation we continue to denote it by $\Sigma$. 

If $L_0$ and $L_1$ have a different number of components, we extend the 4-manifold further. Specifically, attach $|L_0|-|L_1|$ 4-dimensional 1-handles to the outgoing end if $|L_1|<|L_0|$. If instead, $|L_0|<|L_1|$ then attach $|L_1|-|L_0|$ 2-handles to the incoming end. The resulting 4-manifold $\overline{W}$ is a cobordism from $Y_0\#^{|L_0|-1}\SoneStwo$ to $Y_1\#^{|L_1|-1}\SoneStwo$ and the surface can be extended to a surface $\overline{\Sigma}$ from $\kappa(L_0)$ to $\kappa(L_1)$. 


We now consider the map on Floer homology induced by $\overline{W}$.  Lemma \ref{lem:SelfArcSum}  identifies the self-arc sum of $W$ with  $W\otimes (\#^lS^1\times S^2)\times [0,1]$ which, together with the product theorem for arc sums, Equation \eqref{eq:kunnetharc}, allows us to identify  the map on Floer homology induced by the self-arc sum  with
$$  {F_{W}\otimes Id}: \HFa(Y_0)\otimes\HFa(\#^\ell S^1\otimes S^2)\rightarrow \HFa(Y_1)\otimes\HFa(\#^\ell S^1\times S^2) $$

\noindent Combining this with Lemma \ref{lem:Endmaps}, we find that $F_{\overline{W},\bar{\spinct}}$ maps $\alpha\otimes \Theta_{top}$ to $F_{W,\spinct}(\alpha)\otimes \Theta_{top}$. We apply \cite[Theorem 1]{RelativeAdjunction} to $\overline\Sigma$ in $\overline{W}$ to obtain:  
\begin{align*}
\langle c_1(\bar\spinct),[\overline{\Sigma}_{S_{\kappa_0},S_{\kappa_1}}]\rangle+[\overline{\Sigma}_{S_{\kappa_0},S_{\kappa_1}}]^2+2\tau_{F_{W,\spinct}(\alpha)\otimes\Theta_{top}}(\kappa(L_1))-2\tau_{\alpha\otimes\Theta_{top}}(\kappa(L_0))&\leq 2g(\overline\Sigma)
\end{align*}
Where $\overline{\Sigma}_{S_{\kappa_0},S_{\kappa_1}}$ is a rational 2-chain formed from $\overline\Sigma$ and rational Seifert surfaces for $\kappa(L_0)$ and $\kappa(L_1)$ respectively.

Finally, we observe: $$\langle c_1(\bar\spinct),[\overline{\Sigma}_{S_{\kappa_0},S_{\kappa_1}}]\rangle+[\overline{\Sigma}_{S_{\kappa_0},S_{\kappa_1}}]^2=\langle c_1(\spinct),[\Sigma_{S_0,S_1}]\rangle +[\Sigma_{S_0,S_1}]^2.$$\end{proof}
Using a monotonicity property of $\tau$ invariants for links we can quickly deduce additional bounds for $\Theta_0$ and $\Theta_1$ which are not necessarily top Floer classes. We emphasize that the bound in Proposition \ref{prop:betterlinkbound} is better. However, there may be instances where comparing $\tau$'s for different $\Theta$'s is useful.

\begin{cor}\label{cor:LinkCobordisms}
Let  $\Sigma\subset W$ be a smooth cobordism between links $L_0\subset Y_0$ and $L_1\subset Y_1$ for which both $\pi_0(L_i)\to \pi_0(\Sigma)$ are surjective. If $F_{W,\spinct}(\alpha)\neq 0$, then: 
\begin{equation*}
\langle c_1(\spinct),[\Sigma_{S_0,S_1}]\rangle+[\Sigma_{S_0,S_1}]^2+2\tau_{F_{W,\spinct}(\alpha)\otimes\Theta_1}(L_1)-2\tau_{\alpha\otimes\Theta_0}(L_0)\leq -\chi(\Sigma)+|L_0|+|L_1|-2
\end{equation*} 
for any choice of $\Theta_0$ in $\HFa(\#^{|L_0|-1}S^1\times S^2)$ and $\Theta_1$ in $\HFa(\#^{|L_1|-1}S^1\times S^2)$.  
\end{cor}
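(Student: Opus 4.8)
The plan is to deduce Corollary \ref{cor:LinkCobordisms} from Proposition \ref{prop:betterlinkbound} together with a monotonicity statement relating $\tau_{\alpha\otimes\Theta}$ for an arbitrary class $\Theta\in\HFa(\#^{\ell}S^1\times S^2)$ to the distinguished top and bottom classes $\Theta_{top}$ and $\Theta_{bot}$. Concretely, I expect the needed input to be an inequality of the shape
\[
\tau_{\alpha\otimes\Theta_{bot}}(L)\ \leq\ \tau_{\alpha\otimes\Theta}(L)\ \leq\ \tau_{\alpha\otimes\Theta_{top}}(L)
\]
for every nontrivial $\Theta$, valid for any rationally null-homologous link $L$ with $|L|-1=\ell$. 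Such a statement should follow from the surgery/handle-attachment maps of Lemma \ref{lem:Endmaps} applied to the knotification: attaching the $\ell$ two-handles realizing $\overline{W}_{top}$ gives a cobordism carrying $\kappa(L)\#(\text{unknot})^{\ell}$-type knotifications appropriately, and the crossing-change/cobordism monotonicity of $\tau$ (in the spirit of Proposition \ref{prop:CrossingChange} and \cite[Theorem 1]{RelativeAdjunction}) forces the grading of $\alpha\otimes\Theta$ to lie between that of $\alpha\otimes\Theta_{bot}$ and $\alpha\otimes\Theta_{top}$; alternatively one invokes the corresponding statement already proved in \cite[Section 5]{RelativeAdjunction}.

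Granting this, the argument is short. First, apply Proposition \ref{prop:betterlinkbound} verbatim to obtain
\[
\langle c_1(\spinct),[\Sigma_{S_0,S_1}]\rangle+[\Sigma_{S_0,S_1}]^2+2\tau_{F_{W,\spinct}(\alpha)\otimes\Theta_{top}}(L_1)-2\tau_{\alpha\otimes\Theta_{top}}(L_0)\leq \big||L_0|-|L_1|\big|-\chi(\Sigma).
\]
Next, for arbitrary $\Theta_1\in\HFa(\#^{|L_1|-1}S^1\times S^2)$ use $\tau_{F_{W,\spinct}(\alpha)\otimes\Theta_1}(L_1)\leq \tau_{F_{W,\spinct}(\alpha)\otimes\Theta_{top}}(L_1)$ to replace the $L_1$ term, and for arbitrary $\Theta_0\in\HFa(\#^{|L_0|-1}S^1\times S^2)$ use $\tau_{\alpha\otimes\Theta_{top}}(L_0)\geq \tau_{\alpha\otimes\Theta_{bot}}(L_0)\geq \tau_{\alpha\otimes\Theta_0}(L_0)-(|L_0|-1)$ — the last step because the full $\tau$-width of the $\#^{|L_0|-1}S^1\times S^2$ factor is exactly $|L_0|-1$ (each $S^1\times S^2$ contributes a $\tau$ spread of $1$, matching the relation $A_{\max}-A_{\min}=2\|\cdot\|+1$ with trivial genus). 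Substituting both replacements yields
\[
\langle c_1(\spinct),[\Sigma_{S_0,S_1}]\rangle+[\Sigma_{S_0,S_1}]^2+2\tau_{F_{W,\spinct}(\alpha)\otimes\Theta_1}(L_1)-2\tau_{\alpha\otimes\Theta_0}(L_0)\leq \big||L_0|-|L_1|\big|-\chi(\Sigma)+2(|L_0|-1),
\]
and one checks that $\big||L_0|-|L_1|\big|+2(|L_0|-1)\leq |L_0|+|L_1|-2$ whenever $|L_0|\leq|L_1|$; the symmetric case $|L_1|\leq|L_0|$ is handled by putting the $|L_1|-1$ loss on the $L_1$ side instead, i.e. bounding $\tau_{F(\alpha)\otimes\Theta_1}(L_1)\leq \tau_{F(\alpha)\otimes\Theta_{top}}(L_1)+ ?$ — actually here one wants $\tau_{F(\alpha)\otimes\Theta_1}(L_1)\leq \tau_{F(\alpha)\otimes\Theta_{top}}(L_1)$ directly and loses nothing, so the worst case is genuinely symmetric after also allowing a $(|L_1|-1)$ loss on the $L_0$ side when needed; tracking the two cases gives the stated uniform bound $-\chi(\Sigma)+|L_0|+|L_1|-2$.

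The main obstacle I anticipate is pinning down the precise monotonicity inequalities for $\tau_{\alpha\otimes\Theta}$ across the $S^1\times S^2$ summands with the correct constants — i.e., establishing cleanly that the $\tau$-spread over all nontrivial $\Theta\in\HFa(\#^{\ell}S^1\times S^2)$ is exactly $\ell$, and that $\Theta_{top}$, $\Theta_{bot}$ realize the extremes. This is essentially bookkeeping with the knotification and the handle maps of Lemma \ref{lem:Endmaps}, but it is where all the arithmetic lives; once it is in hand, the inequality manipulation above is routine. If the clean spread-$\ell$ statement is not already available in \cite{RelativeAdjunction}, the fallback is to use only the one-sided bounds $\tau_{\alpha\otimes\Theta_1}(L_1)\leq\tau_{\alpha\otimes\Theta_{top}}(L_1)$ and a crude $\tau_{\alpha\otimes\Theta_0}(L_0)\geq\tau_{\alpha\otimes\Theta_{top}}(L_0)-(|L_0|-1)$ coming from iterating the crossing-change inequality of Proposition \ref{prop:CrossingChange} on the knotification, which suffices for the claimed bound.
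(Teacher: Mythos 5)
Your approach is essentially the same as the paper's: apply Proposition \ref{prop:betterlinkbound} and then trade the $\Theta_{top}$'s for arbitrary classes $\Theta_0$, $\Theta_1$ using the monotonicity of $\tau_{\alpha\otimes\Theta}$ across the $S^1\times S^2$ summands of the knotification (which is \cite[Theorem 2(b)]{RelativeAdjunction} and is indeed already available). The one substantive discrepancy is your constant: the monotonicity statement in \cite{RelativeAdjunction} bounds the spread of $2\tau$ by $|L|-1$, i.e.\ $2\tau_{\alpha\otimes\Theta_{top}}(L) - 2\tau_{\alpha\otimes\Theta_{bot}}(L) \leq |L|-1$, whereas you assumed the spread of $\tau$ itself is $|L|-1$; this costs you a factor of $2$. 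With the sharp version the loss from the $L_0$ substitution is only $|L_0|-1$ (not $2(|L_0|-1)$), which is what the paper uses and makes the arithmetic $||L_0|-|L_1||-\chi(\Sigma)+|L_0|-1 \leq -\chi(\Sigma)+|L_0|+|L_1|-2$ come out cleanly without the case analysis you flagged (your own check only closed with equality when $|L_0|\leq|L_1|$, and your proposed fix for $|L_0|>|L_1|$ — ``putting the loss on the $L_1$ side'' — doesn't actually produce a gain, since the bound $\tau_{\Theta_1}\leq\tau_{top}$ is free and all the slack necessarily sits on the $L_0$ side). So: same route, but tighten the monotonicity constant and the hand-waving disappears.
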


\begin{proof}
 The $\tau$ invariants of links satisfy a monotonicity property \cite[Theorem 2(b)]{RelativeAdjunction}, which states  that for any class $\alpha$ in $\HFa(Y)$ and $\Theta$ in $
\HFa(\#^{|L|-1}\SoneStwo)$: $$2\tau_{\alpha\otimes \Theta_{bot}}(L)\leq 2\tau_{\alpha\otimes \Theta}(L)\leq 2\tau_{\alpha\otimes \Theta_{top}}(L)\leq 2\tau_{\alpha\otimes \Theta_{bot}}(L)+|L|-1.$$

\noindent We combine the bounds above with those of Proposition \ref{prop:betterlinkbound}. Monotonicity allows us to replace $\tau_{F_{W,\spinct}(\alpha)\otimes \Theta_{top}}(L_1)$ with $\tau_{F_{W,\spinct}(\alpha)\otimes\Theta_1}(L_1)$ in Proposition \ref{prop:betterlinkbound}, retaining its validity. Turning  the monotonicity bounds around, we have $$-\tau_{\alpha\otimes \Theta_0}(L_0)-|L_0|+1\leq -\tau_{\alpha\otimes \Theta_{bot}}(L_0)-|L_0|+1\leq -\tau_{\alpha\otimes \Theta_{top}}(L_0),$$
which we can also substitute in Proposition \ref{prop:betterlinkbound}, obtaining 
\begin{align*}
\langle c_1(\spinct),[\Sigma_{S_0,S_1}]\rangle+[\Sigma_{S_0,S_1}]^2+2\tau_{F_{W,\spinct}(\alpha)\otimes\Theta_1}(L_1)-2\tau_{\alpha\otimes\Theta_0}(L_0)&\leq ||L_0|-|L_1||-\chi(\Sigma)\\
& \hspace{2.5cm} +|L_0|-1\\
&\leq -\chi(\Sigma)+|L_0|+|L_1|-2.
\end{align*} 
\end{proof}

\subsection{Bounds for $\tau$ invariants of braid pattern links} We now prove Theorem \ref{thm:BPBounds}, which we restate here for the reader's convenience.

\BPBounds*

\begin{proof} To start, we fix some notation. Write $k$ for the number of positive crossings in the braid $\beta$ and $\ell$ for the number of negative crossings. Then the writhe is $\writhe(\beta)=k-\ell$ and the length of $\beta$ is $\len(\beta)=k+\ell$. Write $\beta_+$ for the positive braid obtained by changing all negative crossings of $\beta$ to positive. Write $\beta_-$ for the negative braid obtained by changing all positive crossings to negative. 

Fixing $N\gg 0$ we can construct a cobordism in $S^1\times D^2\times [0,1]$ from $\beta_+$ to the torus knot $T_{p,pN+1}$, which is given by attaching $(p-1)(pN+1)-\len(\beta)$ bands to the closure of $\beta_+$. We use the same collection of band attachments to produce a cobordism $\Sigma_+$ from $P_{\beta_+}(K,\lambda)$ to $K_{p,pN+1}$, contained in $\nu(K)\times [0,1]\subset Y\times [0,1]$. Note that $\Sigma_+$ is connected.

Applying Corollary \ref{cor:LinkCobordisms} to $\Sigma_+$ cobordism gives: 
\begin{align*}
    2\tau_\alpha(K_{p,pN+1})- 2\tau_{\alpha\otimes\Theta}(P_{\beta_+}(K,\lambda)) &\leq -\chi(\Sigma_+)+|\beta|-1\\
    &= (p-1)(pN+1)-\len(\beta)+\lvert\beta\rvert -1
\end{align*} where $\lvert\beta\rvert$ denotes the number of components of the closure of $\beta$.

Now, Theorem \ref{thm:CableBounds} shows that $2\tau_\alpha(K_{p,pN+1})$ is bounded below by $$2p\tau_{\alpha}(K)+p(p-1)\Nlink=2p\tau_{\alpha}(K)+(p-1)(p\lk_\Q(K,\lambda)+pN).$$ Thus,  
\begin{equation}\label{Eqn:posbound1}
-2\tau_{\alpha\otimes\Theta}(P_{\beta_+}(K,\lambda))+2p\tau_{\alpha}(K)+(p-1)p\lk_\Q(K,\lambda)+\len(\beta) \leq (p-1)+\lvert\beta\rvert-1.
\end{equation}

To obtain the bound for $P_{\beta}(K,\lambda)$, observe that $P_{\beta}(K,\lambda)$ can be changed to $P_{\beta_+}(K,\lambda)$ by changing $\ell$ negative crossings to positive. Applying Proposition \ref{prop:CrossingChange} a total of $\ell$ times gives the bound: $$\tau_{\alpha\otimes\Theta}(P_{\beta_+}(K,\lambda))\leq \tau_{\alpha\otimes\Theta}(P_{\beta}(K,\lambda))+\ell.$$

Combining with Equation \ref{Eqn:posbound1} we have:
$$-2\tau_{\alpha\otimes\Theta}(P_{\beta}(K,\lambda))-2\ell+2p\tau_{\alpha}(K)+(p-1)p\lk_\Q(K,\lambda)+k+\ell\leq (p-1)+\lvert\beta\rvert-1.$$
And thus, 
\begin{equation}\label{Eqn:posbound}
-2\tau_{\alpha\otimes\Theta}(P_{\beta}(K,\lambda))+2p\tau_{\alpha}(K)+(p-1)p\lk_\Q(K,\lambda)+\writhe(\beta)\leq (p-1)+\lvert\beta\rvert-1.
\end{equation}

Now, to obtain the other inequality, we make a similar argument for $\beta_-$. Start by fixing $N'\ll 0$. Then we can find a cobordism from $T_{p,pN'+1}$ to $\beta_-$ inside $S^1\times D^2\times [0,1]$. This time we attach bands to \emph{delete} negative crossings. The minimal total number of bands needed is $\len(T_{p,pN'+1})-\len(\beta)=-(p-1)(pN'+1)-\len(\beta)$. As before, we use the same sequence of band additions to construct a cobordism $\Sigma_-$ from $K_{p,pN'+1}$ to $P_{\beta_-}(K,\lambda)$ inside $\nu(K)\times [0,1]\subset Y\times [0,1]$. 

Corollary \ref{cor:LinkCobordisms} applied to $\Sigma_-$ implies: 
\begin{align*}
2\tau_{\alpha\otimes\Theta}(P_{\beta_-}(K,\lambda))- 2\tau_\alpha(K_{p,pN'+1})&\leq-\chi(\Sigma_-)+|\beta|-1\\
&=-(p-1)(pN'+1)-\len(\beta)+\lvert\beta\rvert-1.
\end{align*}
Theorem \ref{thm:CableBounds} implies $-2\tau_\alpha(K_{p,pN'+1})$ is bounded below by $$-2p\tau_{\alpha}(K)-(p-1)p\Nlink-2(p-1).$$
Thus:
\begin{equation}\label{Eqn:negbound1}
2\tau_{\alpha\otimes\Theta}(P_{\beta_-}(K,\lambda))-2p\tau_\alpha(K)-(p-1)p\selflink(K,\lambda)+\len(\beta)\leq (p-1)+\lvert\beta\rvert-1.
\end{equation}
Now $K_{\beta_-}$ can be changed to $P_{\beta}(K,\lambda)$ by changing $k$ negative crossings to positive. Applying Proposition \ref{prop:CrossingChange} a total of $k$ times gives:
$$\tau_{\alpha\otimes\Theta}(P_{\beta}(K,\lambda))-k\leq \tau_{\alpha\otimes\Theta}(P_{\beta_-}(K,\lambda)).$$
Finally, combining with Equation \ref{Eqn:negbound1}:
\begin{equation}\label{Eqn:negbound}
2\tau_{\alpha\otimes\Theta}(P_{\beta}(K,\lambda))-2p\tau_\alpha(K)-(p-1)p\selflink(K,\lambda)-\writhe(\beta)\leq (p-1)+\lvert\beta\rvert-1
\end{equation}
Together, Equations \ref{Eqn:posbound} and \ref{Eqn:negbound} imply the result.
\end{proof}

\section{A 4-dimensional rational genus bound}
In this section we combine our preparatory work from previous sections to prove our main result, Theorem \ref{thm:RationalGenusBound}.  We then observe that we  obtain stronger genus bounds if we restrict the boundary conditions allowed for rational slice surfaces.  In particular, we obtain Corollary \ref{cor:RatLongframed}, which gives bounds for the rational slice genus relative to the rational longitude. Finally, observing that $\tau_{\max}-\tau_{\min}$ is unchanged under local knotting, we show that this difference is actually a rational PL slice genus bound, proving Theorem \ref{cor:rationalPLslice}.

We begin with a lemma:
\begin{lemma}\label{lem:twisting}
Let $\lambda$ be a choice of framing for a knot $K\subset Y$ and $\beta$ a braid. If $P_{\beta}(K,\lambda)$ denotes the satellite formed from by identifying the longitude of a solid torus containing the closure of $\beta$ with the framing $\lambda$ for $K$, then $P_\beta(K,\lambda)$ and $P_{\beta(\Delta^2)^{-1}}(K,\lambda+\mu)$ describe the same satellite link of $K$, where $(\Delta^2)^{-1}$ denotes a negative full twist and $\mu$ is the meridian of $K$.
\end{lemma}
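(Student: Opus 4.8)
The plan is to realize the framing change from $\lambda$ to $\lambda+\mu$ by an explicit fibration‑preserving self‑diffeomorphism of the pattern solid torus, and then to identify the effect of that diffeomorphism on a closed braid with multiplication by a full twist. First I would fix notation for the satellite construction: write $V=S^1\times D^2$ for the pattern solid torus, with $\widehat\beta\subset V$ the closure of $\beta$, positioned so that the projection $\widehat\beta\to S^1$ is an honest covering. A framing $\lambda$ of $K$ determines, up to isotopy, an orientation‑preserving diffeomorphism $\phi_\lambda\colon V\xrightarrow{\sim}\nu(K)$ carrying the core to $K$, the preferred longitude $S^1\times\{1\}\subset\partial V$ to $\lambda$, and the meridian $\{\ast\}\times\partial D^2$ to the meridian $\mu$ of $K$; by definition $P_\beta(K,\lambda)=\phi_\lambda(\widehat\beta)$.

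Next I would introduce the twist map $\tau\colon V\to V$, $\tau(\theta,z)=(\theta,e^{i\theta}z)$, for $\theta\in\R/2\pi\Z$ and $z\in D^2\subset\C$. This is a fibration‑preserving diffeomorphism, so it takes closed braids to closed braids, and on $\partial V$ it acts, in the basis (longitude, meridian), by $l\mapsto l+m$, $m\mapsto m$. Hence $\phi_\lambda\circ\tau$ sends the longitude of $\partial V$ to $\lambda+\mu$ and still sends the meridian to $\mu$, so it is a valid gluing for the framing $\lambda+\mu$, i.e. $\phi_{\lambda+\mu}\simeq\phi_\lambda\circ\tau$. Then I would record the key computation: because $\tau$ rotates the disk fiber over $\theta$ by the angle $\theta$, transporting a point once around the $S^1$‑direction introduces one full rotation of the fiber, so $\tau(\widehat\gamma)=\widehat{\gamma\,\Delta^2}$ for every $\gamma\in B_n$, where $\Delta^2=(\sigma_1\cdots\sigma_{n-1})^n$ is the central full twist. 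Finally, taking $\gamma=\beta(\Delta^2)^{-1}$ and using centrality of $\Delta^2$ gives $P_{\beta(\Delta^2)^{-1}}(K,\lambda+\mu)=\phi_{\lambda+\mu}(\widehat{\beta(\Delta^2)^{-1}})=\phi_\lambda(\tau(\widehat{\beta(\Delta^2)^{-1}}))=\phi_\lambda(\widehat\beta)=P_\beta(K,\lambda)$, which is the claim. (This is the classical "twisting a satellite" phenomenon; I would cite it as such if a reference is preferred over the explicit argument.)

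The one genuinely delicate point — and the step I expect to require the most care — is the bookkeeping of orientation conventions: the orientations of $K$, $\mu$, $\lambda$, and $V$, together with the sign convention for $\sigma_i\in B_n$, determine whether the twist map $\tau$ contributes $\Delta^2$ or $\Delta^{-2}$, hence whether the correct pairing is $(\lambda+\mu,\ \beta(\Delta^2)^{-1})$ or $(\lambda+\mu,\ \beta\Delta^2)$. I would pin all of these down at the outset and sanity‑check against the self‑linking bookkeeping already present in the paper: passing from $\lambda$ to $\lambda+\mu$ raises $\selflink(K,\cdot)$ by $1$ (Proposition \ref{prop:RatSS}), while inserting $(\Delta^2)^{-1}$ on $p$ strands lowers the writhe by $p(p-1)$, so that the combination $(p-1)p\,\selflink(K,\lambda)+\writhe(\beta)$ appearing in Theorem \ref{thm:BPBounds} is invariant under the substitution. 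This forces exactly the pairing in the statement, so once the conventions are fixed the sign is not a matter of choice.
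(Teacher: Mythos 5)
Your argument is correct and is essentially the paper's, just made explicit: the paper's proof is the two-sentence observation that changing the gluing from $\lambda$ to $\lambda+\mu$ inserts a positive full twist in the satellite, which is cancelled by replacing $\beta$ with $\beta(\Delta^2)^{-1}$. Your explicit twist diffeomorphism $\tau$, the identity $\phi_{\lambda+\mu}\simeq\phi_\lambda\circ\tau$, and the sign check against Proposition \ref{prop:RatSS} and Theorem \ref{thm:BPBounds} are a welcome fleshing-out of that same idea.
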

\begin{proof}
Identifying the solid torus containing the closure of $\beta$ with $\lambda+\mu$ puts a positive full twist in the resulting satellite knot. To undo this twist and obtain the original link $P_{\beta}(K,\lambda)$, change the pattern braid by a negative full twist.
\end{proof}

Let $$\tau_{\max}(K)=\max\{\tau_\alpha(K)\mid \alpha\in\HFa(Y)\}$$ and define $\tau_{\min}(K)$ analogously.  Recall the statement of Theorem \ref{thm:RationalGenusBound}.

\RationalGenusBound*

\begin{proof} Let $\Sigma$ be a rational slice surface for $K$. By Proposition \ref{prop:neighborhood}, near $K$ the surface induces a well-defined braided satellite of $K$. Choosing a framing $\lambda$ for $K$, there is a closed braid $\beta$ so that this satellite is written as $P_\beta(K,\lambda)$. Let \[c:=(p-1)p\selflink(K,\lambda)+\omega(\beta ).\]
\noindent The value of $c$ is a constant depending only on the satellite link itself, not the choice of $\lambda$ or $\beta$ used to describe this link. Indeed, Lemma \ref{lem:twisting} shows that $P_\beta(K,\lambda)$ has an equivalent description: $P_{\beta(\Delta^2)^{-1}}(K,\lambda+\mu)$. These changes impact the rational linking number and the writhe accordingly: $\selflink(K,\lambda+\mu)=\selflink(K,\lambda)+1$ and $\omega(\beta(\Delta^2)^{-1})=\omega(\beta)-(p-1)p$. It follows that the value of $c$ is independent of the choices used in the description of $P_\beta(K,\lambda)$.  

For the rest of the proof, we work with a fixed description of the induced satellite $P_{\beta}(K,\lambda)$, and we suppress the framing from the notation for simplicity: $P_\beta(K)$. In addition, we write $|\beta|$ for the number of components of $\beta$, which equals the number of components of $P_\beta(K)$.

Now, construct a pair of closed braids $\beta_{1}$ and $\beta_2$ from $\beta$ by introducing $\lvert\beta\rvert-1$ positive, respectively negative, crossings so that $\beta_1$  and $\beta_2$ are both \emph{knots} and 
\begin{align*}
\omega(\beta_1)&=\omega(\beta)+\lvert\beta\rvert-1\\
\omega(\beta_2)&=\omega(\beta)-\lvert\beta\rvert+1.
\end{align*} 
Then construct a pair of surfaces $\Sigma_1$ and $\Sigma_2$ from $\Sigma$ by attaching $\lvert\beta\rvert-1$ bands so that $\Sigma_1$ induces the satellite $P_{\beta_1}(K)$, $\Sigma_2$ induces $P_{\beta_2}(K)$ and for $i=1,2$: 
\begin{equation*}
-\chi(\Sigma_i)=-\chi(\Sigma)+\lvert\beta\rvert-1.
\end{equation*}
Applying \cite[Corollary 5.2]{RelativeAdjunction} to $\Sigma_i$ gives:
\begin{equation*}
2|\tau_{\alpha}(P_{\beta_i}(K))|\leq-\chi(\Sigma_i)+1= -\chi(\Sigma)+\lvert\beta\rvert. 
\end{equation*}
for each nontrivial class $\alpha$ in $\HFa(Y)$. Thus, over all nontrivial $\alpha$ and $\gamma$ in $\HFa(Y)$: 
\begin{equation}\label{eqn:Betai}
2\max_{\alpha,\gamma}\{|\tau_{\alpha}(P_{\beta_1}(K))|,|\tau_{\gamma}(P_{\beta_2}(K))|\}\leq -\chi(\Sigma)+\lvert\beta\rvert. 
\end{equation}

\begin{lemma}\label{lem:Boundmaxmin} With the set up above, we have the following estimate:
\begin{align*}\max_{\alpha,\gamma}\left\{ \begin{array}{r}
2p\tau_{\alpha}(K)+c-(p-1)+|\beta|-1\\
 -2p\tau_{\gamma}(K)-c-(p-1)+|\beta|-1
 \end{array}\right\}
\leq 2\max_{\alpha,\gamma}\{|\tau_{\alpha}(P_{\beta_1}(K)|,|\tau_{\gamma}(P_{\beta_2}(K)|\}.
\end{align*}
\end{lemma}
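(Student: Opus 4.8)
The plan is to extract both inequalities inside the left-hand maximum directly from Theorem~\ref{thm:BPBounds}, applied separately to the two auxiliary satellites $P_{\beta_1}(K,\lambda)$ and $P_{\beta_2}(K,\lambda)$ constructed above, and then to combine them using $|x|\geq x$ and $|x|\geq -x$. The simplifying point is that $\beta_1$ and $\beta_2$ are $p$-braids whose closures are \emph{knots}: thus $|\beta_i|=1$, the ambient manifold $\#^{|\beta_i|-1}\SoneStwo$ is just $S^3$, the auxiliary class $\Theta\in\HFa(S^3)\cong\F$ is irrelevant so that $\tau_{\alpha\otimes\Theta}=\tau_\alpha$, and the term $|\beta_i|-1$ on the right-hand side of Theorem~\ref{thm:BPBounds} vanishes. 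Hence for $i=1,2$ the theorem reads $|2\tau_\alpha(P_{\beta_i}(K,\lambda))-2p\tau_\alpha(K)-(p-1)p\selflink(K,\lambda)-\writhe(\beta_i)|\leq p-1$.

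First I would treat $\beta_1$. Since $\writhe(\beta_1)=\writhe(\beta)+|\beta|-1$ and $c=(p-1)p\selflink(K,\lambda)+\writhe(\beta)$, the constant $(p-1)p\selflink(K,\lambda)+\writhe(\beta_1)$ is exactly $c+|\beta|-1$, so the lower half of the absolute-value bound gives
\[
2\tau_\alpha\big(P_{\beta_1}(K)\big)\ \geq\ 2p\tau_\alpha(K)+c-(p-1)+|\beta|-1
\]
for every nontrivial $\alpha\in\HFa(Y)$. Symmetrically, for $\beta_2$ we have $\writhe(\beta_2)=\writhe(\beta)-|\beta|+1$, so $(p-1)p\selflink(K,\lambda)+\writhe(\beta_2)=c-|\beta|+1$, and the \emph{upper} half of the bound gives $2\tau_\gamma(P_{\beta_2}(K))\leq 2p\tau_\gamma(K)+c+(p-1)-|\beta|+1$, which I rewrite as
\[
-2\tau_\gamma\big(P_{\beta_2}(K)\big)\ \geq\ -2p\tau_\gamma(K)-c-(p-1)+|\beta|-1
\]
for every nontrivial $\gamma\in\HFa(Y)$.

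To conclude, I would use $2|\tau_\alpha(P_{\beta_1}(K))|\geq 2\tau_\alpha(P_{\beta_1}(K))$ and $2|\tau_\gamma(P_{\beta_2}(K))|\geq -2\tau_\gamma(P_{\beta_2}(K))$ to deduce, from the two displayed inequalities, that $2\max\{|\tau_\alpha(P_{\beta_1}(K))|,\,|\tau_\gamma(P_{\beta_2}(K))|\}$ dominates both $2p\tau_\alpha(K)+c-(p-1)+|\beta|-1$ and $-2p\tau_\gamma(K)-c-(p-1)+|\beta|-1$ for each fixed pair $(\alpha,\gamma)$. Taking the maximum over all nontrivial $\alpha$ and $\gamma$ on both sides then produces precisely the asserted inequality.

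I do not anticipate a substantive obstacle: all the analytic content already resides in Theorem~\ref{thm:BPBounds}, and the only thing that requires care is the constant bookkeeping. In particular, one must keep the symbol $|\beta|$ --- the component count of the \emph{original} braid, which re-enters through $\writhe(\beta_1)$ and $\writhe(\beta_2)$ --- distinct from $|\beta_i|=1$, which controls the $\Theta$-factor and the $p-1$ error term when the theorem is invoked for $\beta_1$ and $\beta_2$; and one should recall that the framing/full-twist ambiguity in $P_\beta(K,\lambda)$ has already been absorbed into the definition of $c$ just before the lemma, so no further normalization is needed.
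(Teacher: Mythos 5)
Your proposal is correct and follows essentially the same route as the paper's own proof: apply Theorem~\ref{thm:BPBounds} to $P_{\beta_1}(K)$ and $P_{\beta_2}(K)$ (noting $|\beta_i|=1$), substitute $\writhe(\beta_i)=\writhe(\beta)\pm(|\beta|-1)$ to express the constants in terms of $c$, and combine via $|x|\ge\pm x$. The only cosmetic difference is that you extract just the one half of each absolute-value bound that is needed, whereas the paper writes out both halves for each $\beta_i$ before selecting; the bookkeeping is identical.
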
 
\begin{proof}
The bound in Theorem \ref{thm:BPBounds} applied to $P_{\beta_1}(K)$ yields:  
\begin{align*}
\max_{\alpha}\left\{ \begin{array}{r}
2p\tau_{\alpha}(K)+p(p-1)\selflink(K,\lambda)+\writhe(\beta_1)-(p-1)\\
 -2p\tau_{\alpha}(K)-p(p-1)\selflink(K,\lambda)-\writhe(\beta_1)-(p-1)
 \end{array}\right\}
\leq 2\max_{\alpha}|\tau_{\alpha}(P_{\beta_1}(K))|
\end{align*}
Substituting $\omega(\beta_1)=\omega(\beta)+|\beta|-1$ and writing in terms of $c$ we have:
\begin{align*}
\max_{\alpha}\left\{ \begin{array}{r}
2p\tau_{\alpha}(K)+c-(p-1)+|\beta|-1\\
 -2p\tau_{\alpha}(K)-c-(p-1)-|\beta|+1
 \end{array}\right\}
\leq 2\max_{\alpha}|\tau_{\alpha}(P_{\beta_1}(K))|
\end{align*}
Doing the same calculation for $P_{\beta_2}(K)$, we obtain:
\begin{align*}
\max_{\gamma}\left\{ \begin{array}{r}
2p\tau_{\gamma}(K)+c-(p-1)-|\beta|+1\\
 -2p\tau_{\gamma}(K)-c-(p-1)+|\beta|-1
 \end{array}\right\}
\leq 2\max_{\gamma}|\tau_{\gamma}(P_{\beta_2}(K))|
\end{align*}

Putting this all together, we have $2\max_{\alpha,\gamma}\{|\tau_{\alpha}(P_{\beta_1}(K))|,|\tau_{\gamma}(P_{\beta_2}(K))|\}$ is bounded below by:
\begin{align*}
\max_{\alpha,\gamma}\left\{ \begin{array}{rr}
2p\tau_{\alpha}(K)+c-(p-1)+|\beta|-1 &2p\tau_{\gamma}(K)+c-(p-1)-|\beta|+1\\
 -2p\tau_{\alpha}(K)-c-(p-1)-|\beta|+1&-2p\tau_{\gamma}(K)-c-(p-1)+|\beta|-1
 \end{array}\right\}
\end{align*}
Thus,
\begin{align*}
\max_{\alpha,\gamma}\left\{ \begin{array}{r}
2p\tau_{\alpha}(K)+c-(p-1)+|\beta|-1\\
-2p\tau_{\gamma}(K)-c-(p-1)+|\beta|-1
 \end{array}\right\}
 \leq 2\max_{\alpha,\gamma}\{|\tau_{\alpha}(P_{\beta_1}(K))|,|\tau_{\gamma}(P_{\beta_2}(K))|\}.
\end{align*}
\end{proof}
Combining Lemma \ref{lem:Boundmaxmin} with Equation (\ref{eqn:Betai}) we have: 
\begin{align*} 
\max_{\alpha,\gamma}\left\{ \begin{array}{r}
2p\tau_{\alpha}(K)+c-(p-1)+|\beta|-1\\
-2p\tau_{\gamma}(K)-c-(p-1)+|\beta|-1
 \end{array}\right\}
 \leq -\chi(\Sigma)+|\beta|.
\end{align*}
Thus, for a fixed choice of rational slice surface:
\begin{align}\label{eqn:SurfaceBound}
\max_{\alpha,\gamma}\left\{ \begin{array}{r}
2\tau_{\alpha}(K)+\frac{c}{p}\\
-2\tau_{\gamma}(K)-\frac{c}{p}
 \end{array}\right\}
 \leq \frac{-\chi(\Sigma)+p}{p}.
\end{align}

To finish, we take the infimum of both sides over all $p$ and $\Sigma$. On the left hand side, this means finding the infimum over all choices of $c/p$. Letting $c/p$ vary, the smallest value of the left hand side occurs when: $$2\tau_{\alpha}(K)+\frac{c}{p}=-2\tau_{\gamma}(K)-\frac{c}{p}.$$ Solving for $c/p$ and plugging the result into Equation \ref{eqn:SurfaceBound} shows: $$\tau_{\max}(K)-\tau_{\min}(K)=\max_{\alpha,\gamma}(\tau_{\alpha}(K)-\tau_{\gamma}(K))\leq \inf_{\Sigma,p}\frac{-\chi(\Sigma)+p}{p}=2\lVert K\rVert_{Y\times [0,1]} +1.$$  
\end{proof}

\begin{remark}\label{rmk:cValue}
     One should interpret Theorem \ref{thm:RationalGenusBound} as a universal lower bound for an infinite number of minimal genus problems. Within these, one could interpret the value of $c$ as a sort of boundary condition for rational slice surfaces. It is always integral and, varying over all rational slice surfaces, can take on any integer value. In certain instances, it makes sense to consider rational slice surfaces relative to a fixed $c$-value. Equation \ref{eqn:SurfaceBound} gives a genus bound for each value of $c$, which, for a fixed $c$-value, can be stronger than the bound in Theorem \ref{thm:RationalGenusBound}. Note, however, that  there are infinitely many braids in infinitely many braid groups having any particular value of $c$, so fixing this choice does not resolve the infinitude of minimal genus problems one has to consider.

    For instance, in \cite{Wu-Yang} Wu and Yang consider rational slice surfaces whose induced satellites are a minimal parallel of the rational longitude. We will refer to such surfaces as \emph{Seifert framed} rational slice surfaces. In Corollary \ref{cor:RatLongframed} below, we show Seifert framed rational slice surfaces satisfy the boundary condition $c=0$. 
\end{remark}
\begin{restatable}{cor}{RatLongFramed}\label{cor:RatLongframed}
    If $\Sigma$ is a Seifert framed rational slice surface for a rationally null-homologous knot $K\subset Y$, then, for each nontrivial Floer class $\alpha$ in $\HFa(Y)$, $$2|\tau_{\alpha}(K)|\leq \frac{-\chi(\Sigma)}{p}+1.$$
\end{restatable}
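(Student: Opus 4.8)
\textbf{Proof proposal for Corollary \ref{cor:RatLongframed}.}
The plan is to reduce this to the $c$-value bookkeeping developed in the proof of Theorem \ref{thm:RationalGenusBound}, together with Proposition \ref{prop:RatSS}. First I would recall that a Seifert framed rational slice surface $\Sigma$ is one whose induced braided satellite $P_\beta(K,\lambda)$ is a minimal parallel of the rational longitude: that is, after choosing a framing $\lambda$ for $K$ with $\partial F = p\lambda + r\mu$ for a rational Seifert surface $F$, the induced satellite is the $(p, r)$-cabling pattern with $p$ strands, so $\beta$ is (conjugate to) a power of the full twist realizing the torus link $T_{p,r}$. The key computation is that for this pattern the quantity $c = (p-1)p\,\selflink(K,\lambda) + \omega(\beta)$ vanishes. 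By Proposition \ref{prop:RatSS}, $\selflink(K,\lambda) = -r/p$, so $(p-1)p\,\selflink(K,\lambda) = -(p-1)r$; on the other hand the writhe of the standard $p$-strand braid whose closure is $T_{p,r}$ (equivalently, the $(p,r)$-torus link pattern sitting on $\partial\nu(K)$ with longitude $\lambda$) is exactly $(p-1)r$, so $\omega(\beta) = (p-1)r$ and hence $c = 0$. I would state this as the first step, verifying it carefully since the sign conventions for the writhe of a torus-link braid relative to the chosen framing are the one place an error could creep in.

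Second, I would invoke Equation \eqref{eqn:SurfaceBound} from the proof of Theorem \ref{thm:RationalGenusBound}, which for a fixed rational slice surface $\Sigma$ of degree $p$ and inducing a satellite with $c$-value $c$ reads
\[
\max_{\alpha,\gamma}\left\{ 2\tau_{\alpha}(K)+\tfrac{c}{p},\ -2\tau_{\gamma}(K)-\tfrac{c}{p}\right\} \leq \frac{-\chi(\Sigma)+p}{p}.
\]
Setting $c = 0$ and taking $\alpha = \gamma$ gives $2|\tau_\alpha(K)| \leq \frac{-\chi(\Sigma)+p}{p} = \frac{-\chi(\Sigma)}{p} + 1$ for every nontrivial $\alpha$, which is exactly the claim.

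Alternatively — and this is the route I would actually present in full, since it is self-contained and does not require digging the intermediate inequality out of the previous proof — one can apply Theorem \ref{thm:BPBounds} directly to the knot $P_\beta(K,\lambda)$ (which is a knot, not a link, when $\gcd(p,r)=1$; in general one first does the band-attachment trick from the proof of Theorem \ref{thm:RationalGenusBound} to make it a knot at the cost of $|\beta|-1$ in Euler characteristic) together with the relative adjunction bound \cite[Corollary 5.2]{RelativeAdjunction} applied to $\Sigma$, namely $2|\tau_\alpha(P_\beta(K,\lambda))| \leq -\chi(\Sigma)+1$. Combining these two inequalities, and substituting $c=0$ so that $2\tau_{\alpha\otimes\Theta}(P_\beta(K,\lambda)) = 2p\tau_\alpha(K) + O(p+|\beta|)$, then dividing by $p$, yields the stated bound after the error terms are absorbed into the $+1$. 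I expect the main obstacle to be purely combinatorial: pinning down the precise writhe/self-linking identity that forces $c = 0$ for the rational-longitude pattern, and handling the $|\beta|-1$ correction terms when $p$ and $r$ are not coprime so that the minimal parallel of the rational longitude is a genuine multi-component link rather than a knot. Everything else is a direct substitution into inequalities already established in the excerpt.
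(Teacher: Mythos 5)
Your first route is exactly the paper's proof: compute that the $c$-value vanishes for the Seifert-framed pattern using Proposition~\ref{prop:RatSS} and the writhe of the standard torus-link braid, then substitute $c=0$ into Equation~\eqref{eqn:SurfaceBound} and take $\alpha=\gamma$. The only cosmetic difference is notation — the paper writes the rational longitude as $r\lambda+s\mu$ with $\gcd(r,s)=1$ and the induced satellite as $K_{mr,ms}$ of degree $p=mr$, which handles the multi-component case uniformly, whereas you initially set the degree equal to the order and then flag the non-coprime case separately — but the substance is identical.
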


\begin{proof}
    Choose a framing $\lambda$ for $K$. We can describe the satellite link induced by $\Sigma$ as a cable link: $$P_{\beta}(K,\lambda)= K_{mr,ms}$$ where $\gcd(r,s)=1$,  $r\lambda+s\mu$ is the homology class in $\partial \nu(K)$ of the rational longitude, and $\beta$ is the standard $mr$-braid representative of the $(mr,ms)$ torus link. Therefore, $\selflink(K,\lambda)= -\frac{s}{r}$ and $\writhe(\beta)=(mr-1)ms$. The value of $c$ is: $$c=mr(mr-1)\cdot -\frac{s}{r}+ (mr-1)ms=0.$$

    Applying Equation \ref{eqn:SurfaceBound} to any rational slice surface satisfying the boundary condition $c=0$ gives: 
\begin{align*}
    \max_{\alpha}2|\tau_{\alpha}(K)|\leq \max_{\alpha,\gamma}\left\{ \begin{array}{r}
2\tau_{\alpha}(K)\\
-2\tau_{\gamma}(K)
 \end{array}\right\}
 \leq \frac{-\chi(\Sigma)+p}{p}.
\end{align*}
\end{proof}
\begin{remark}
    In fact, the conclusion of Corollary \ref{cor:RatLongframed} holds for any rational slice surface satisfying the boundary condition $c=0$.  As alluded to above, this is a significantly wider class of surfaces than those which are Seifert framed. In particular, there are infinitely many distinct satellite links with each of infinitely many distinct winding numbers, all of which have $c=0$. Corollary \ref{cor:RatLongframed} gives slice genus bounds for all of these satellites.
\end{remark}

\subsection{A rational PL slice genus bound}
The difference $\tau_{\max}-\tau_{\min}$ is insensitive to connect summing with local knots: if $K\subset Y$ is rationally null-homologous and $J$ is a knot in $S^3$, connect summing $K$ with $J$ shifts \emph{every} $\tau$ invariant of $K$ by $\tau(J)$. This insensitivity to  local knotting has implications in the PL-category. In fact, the breadth of $\tau$ invariants bounds the rational PL slice genus, in addition to the rational (smooth) slice genus. Corollary \ref{cor:rationalPL} and Theorem \ref{cor:rationalPLslice} make this precise. 

A piecewise linear (PL) surface embedded in a 4-manifold is an embedded surface that is smooth except at a finite collection of singular points, whose neighborhoods are homeomorphic to cones on knots in the 3-sphere \cite[Appendix A]{Hom-Levine-Lidman}. If two knots $K_0$ and $K_1$ are PL-cobordant, we can replace the PL-cobordism between them by a smooth cobordism of the same genus between $K_0\#K_0'$ and $K_1\# K_1'$ where $K_0'$ and $K_1'$ are knots corresponding to links of the singular points of the surface.  

The $\tau$ breadth's insensitivity to connected summing with local knots immediately implies it is a PL-concordance invariant. Furthermore, if $K$ and $K'$ are a pair of homologous knots, then the difference of $\tau_{\max}(K')-\tau_{\min}(K')-(\tau_{\max}(K)-\tau_{\min}(K))$ is a lower bound for the PL cobordism genus. 
Celoria studied these ideas in the special case of lens spaces in \cite{Celoria}.

In addition:

\begin{cor}\label{cor:rationalPL}
If $\Sigma$ is a rational PL slice surface of degree $p$ over a rationally null-homologous knot $K$, then $$\tau_{\max}(K)-\tau_{\min}(K)\leq \frac{-\chi(\Sigma)+p}{p}.$$
\end{cor}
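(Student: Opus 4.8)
The plan is to deduce this from the smooth bound of Theorem~\ref{thm:RationalGenusBound} --- or rather, from the per-surface inequality \eqref{eqn:SurfaceBound} established in its proof --- together with the insensitivity of $\tau_{\max}-\tau_{\min}$ to connect sum with local knots, already observed above. Let $\Sigma$ be a rational PL slice surface of degree $p$ over $K$. By definition $\Sigma$ is smoothly embedded away from finitely many interior cone points, the $i$-th of which has a neighborhood modeled on the cone on a knot $J_i\subset S^3$. First I would invoke the standard desingularization of PL surfaces (see \cite[Appendix~A]{Hom-Levine-Lidman}, cf.\ \cite{hom2023plgenus} and the discussion preceding this corollary): resolving each cone point by piping it out to the boundary trades the cone on $J_i$ --- which is an embedded disk --- for a genuine smooth disk, at the cost of connect-summing $J_i$ (up to orientation and mirroring) into $K$. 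This produces a \emph{smooth} rational slice surface $\widehat\Sigma$ of the same degree $p$ over $K\#K'$, where $K'=J_1\#\cdots\#J_m$ (again up to mirrors/orientations), with $\chi(\widehat\Sigma)=\chi(\Sigma)$.

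Next I would feed $\widehat\Sigma$, which is an honest smooth rational slice surface, into the proof of Theorem~\ref{thm:RationalGenusBound}. Equation~\eqref{eqn:SurfaceBound}, applied to the single surface $\widehat\Sigma$ with $c$ its associated boundary constant, gives
\[
\max_{\alpha,\gamma}\left\{2\tau_\alpha(K\#K')+\tfrac{c}{p},\ -2\tau_\gamma(K\#K')-\tfrac{c}{p}\right\}\ \le\ \frac{-\chi(\widehat\Sigma)+p}{p}.
\]
Choosing $\alpha$ and $\gamma$ to realize $\tau_{\max}(K\#K')$ and $\tau_{\min}(K\#K')$ respectively, and bounding the left-hand maximum from below by the average of its two entries --- which cancels the $c/p$ terms --- yields
\[
\tau_{\max}(K\#K')-\tau_{\min}(K\#K')\ \le\ \frac{-\chi(\widehat\Sigma)+p}{p}.
\]

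To conclude, recall that connect-summing $K$ with a knot $K'\subset S^3$ shifts every $\tau_\alpha(K)$ by the same constant $\tau(K')$, so $\tau_{\max}(K\#K')-\tau_{\min}(K\#K')=\tau_{\max}(K)-\tau_{\min}(K)$; combined with $\chi(\widehat\Sigma)=\chi(\Sigma)$ this is exactly the stated inequality. (Theorem~\ref{cor:rationalPLslice} then follows by taking the infimum over all rational PL slice surfaces.) Note that citing Theorem~\ref{thm:RationalGenusBound} directly would only give the weaker bound with $\chi^-(\widehat\Sigma)$ in place of $-\chi(\widehat\Sigma)$, which is why one must go back to \eqref{eqn:SurfaceBound}.

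The step I expect to be the main obstacle is the desingularization: one must check it can be carried out locally near the interior cone points without disturbing the folded structure along $\partial\Sigma$, so that the covering degree $p$ and the boundary behavior are genuinely preserved, and one must track the Euler characteristic honestly through the piping moves --- equivalently, verify that the smooth surface one obtains has the same genus as $\Sigma$, as in \cite[Appendix~A]{Hom-Levine-Lidman}. Everything after that is formal manipulation of the inequality already proved in Section~5.
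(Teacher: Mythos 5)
There is a genuine gap in the desingularization step, and it is precisely the one you flag as "the main obstacle." The claim that piping a cone on $J_i$ to the boundary produces a smooth rational slice surface $\widehat\Sigma$ of degree $p$ over $K\#K'$ cannot be right when $p>1$. The boundary of $\Sigma$ is a degree-$p$ \emph{covering} of $K$, so $\Sigma$ is a folded surface near $K\times\{1\}$: the knot $K$ itself is a singularity of the image. You cannot pipe an interior cone point to the fold locus. What you \emph{can} do is pipe the cone on $J_i$ to a single sheet of $\partial\Sigma$ near $K$ (equivalently, to a strand of the induced satellite link $P_\beta(K)$ sitting in $Y\times\{1-\epsilon\}$). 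But the resulting surface is not a rational slice surface for $K\#J_i$ of degree $p$: a degree-$p$ cover of $K\#J_i$ would carry a copy of $J_i$ on \emph{each} of the $p$ sheets near the boundary, whereas your piping inserts $J_i$ into only one. So the two requirements you state --- preserving the folded covering structure of degree $p$ and connect-summing $J_i$ into the companion $K$ --- are incompatible.

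The paper resolves exactly this issue by piping the cone points to the induced satellite rather than to $K$: after banding $\Sigma$ to the knot-satellites $P_{\beta_1}(K)$, $P_{\beta_2}(K)$, the singular points are tubed out to a smooth surface $\Sigma'\subset Y\times[0,1-\epsilon]$ with boundary $P_{\beta_i}(K)\#J_1\#\cdots\#J_n$, and \cite[Corollary~5.2]{RelativeAdjunction} is applied to this null-homologous knot directly. The local-knot contributions show up as additive shifts $\tau(J_1)+\cdots+\tau(J_n)$ inside the per-surface inequality (your \eqref{eqn:SurfaceBound}), and then cancel in the averaging step, rather than being absorbed into a change of companion knot. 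Your averaging argument, your use of the additivity $\tau_\alpha(K\#J)=\tau_\alpha(K)+\tau(J)$, and your observation that one must use \eqref{eqn:SurfaceBound} rather than the statement of Theorem~\ref{thm:RationalGenusBound} (to get $-\chi$ rather than $\chi^-$) are all exactly right; it is only the identification of $\widehat\Sigma$ as a rational slice surface for $K\#K'$ that fails, and the fix is to let the local knots ride along on the satellite instead of the companion.
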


\begin{proof} Let $\Sigma$ be a rational PL slice surface for $K$ inducing the satellite $P_\beta(K)=P_\beta(K,\lambda)$ for some framing $\lambda$ and $p$-braid $\beta$. As in the proof of Theorem \ref{thm:RationalGenusBound}, if $P_\beta(K)$ is a link, form two related knots $P_{\beta_1}(K)$ and $P_{\beta_2}(K)$ by introducing $|\beta|-1$ positive or negative crossings, respectively. Accordingly, build PL surfaces $\Sigma_1$ and $\Sigma_2$ by attaching $|\beta|-1$ positive, respectively negatively, twisted bands. 

For each singular point $x_i$ on $\Sigma_1$ choose an arc $\gamma_i$ from $x_i$ to $P_{\beta_1}(K)$. Removing these arcs, $\Sigma \setminus \bigcup_i\nu(\gamma_i)$ is a smooth surface with boundary $P_{\beta_1}(K)\# J_1\#\ldots \#J_n$, and we can isotope the boundary of the resulting surface $\Sigma'$ along the $\gamma_i$'s so that $P_{\beta_1}(K)\# J_1\#\ldots \#J_n$ is contained in $Y\times\{1-\epsilon\}$. Then 
\begin{align*}
    |\tau_\alpha(P_{\beta_1}(K)\# J_1\#\ldots \#J_n)|=|\tau_{\alpha}(P_{\beta_1}(K))+\tau(J_1)+\ldots \tau(J_n)|&\leq -\chi(\Sigma_1)+1\\
    &\leq -\chi(\Sigma)+|\beta|.
\end{align*}    The same construction for $\Sigma_2$ shows: $$|\tau_{\alpha}(P_{\beta_2}(K))+\tau(J_1)+\ldots \tau(J_n)|\leq -\chi(\Sigma)+|\beta|.$$
Following through the proof of Theorem \ref{thm:RationalGenusBound} until Equation \ref{eqn:SurfaceBound} now shows:
\begin{align*}
\max_{\alpha,\gamma}\left\{ \begin{array}{r}
2\tau_{\alpha}(K)+\frac{c}{p}+\frac{1}{p}(\tau(J_1)+\ldots +\tau(J_n))\\
-2\tau_{\gamma}(K)-\frac{c}{p}-\frac{1}{p}(\tau(J_1)-\ldots- \tau(J_n))
 \end{array}\right\}
 \leq \frac{-\chi(\Sigma)+p}{p}.
\end{align*}
The same argument as in the conclusion of Theorem \ref{thm:RationalGenusBound} now implies the corollary.
\end{proof}
\noindent Corollary \ref{cor:rationalPL} immediately implies Theorem
 \ref{cor:rationalPLslice}.

\section{Applications}
\subsection{Floer simple knots} A Floer simple knot is one whose knot  Floer homology is minimal, in the sense that: $$\HFKa(Y,K)\cong \HFa(Y).$$  Thus, Floer simple knots have the property that any non-zero class  $\alpha\in \HFKa_a(Y,K)$ in Alexander grading $a$  survives the spectral sequence to $\HFa(Y)$, implying $\tau_{\alpha}(K)=a$. It follows that the breadth of the $\tau$ invariants agrees with the breadth of knot Floer homology  $$\tau_{\max}(K)-\tau_{\min}(K)=A_{\max}(K)-A_{\min}(K).$$
Theorem \ref{thm:RationalGenusBound} now implies that the rational Seifert and slice genera agree for Floer simple knots (Theorem \ref{thm:FSK} from the introduction, restated here).

\FSK*
\begin{proof} 
Theorem \ref{thm:RationalGenusBound} shows that $\tau_{\max}(K)-\tau_{\min}(K)\leq 2\lVert K\rVert_{Y\times [0,1]}+1$. On the other hand, since $K$ is Floer simple, $2\lVert K\rVert_Y+1=A_{\max}(K)-A_{\min}(K)= \tau_{\max}(K)-\tau_{\min}(K)$. Equation \ref{eqn:4genus3genus} now implies the result.
    \end{proof}

 In light of Corollary \ref{cor:RatLongframed} and the above discussion for Floer simple knots, it is natural to ask whether $\max_{\alpha}|2\tau_{\alpha}(K)|$ is a lower bound for $2\lVert K\rVert_{Y\times [0,1]}+1$, which would be better than the bound in Theorem \ref{thm:RationalGenusBound}. The following example shows this is not the case. This example also demonstrates that the rational slice genus differs from the rational slice genus relative to the rational longitude, as considered by Wu and Yang \cite{Wu-Yang}.

\begin{example}\label{ex:SmallGenusCable}
Let $J=(\RP3,\RP1)\#(S^3,T_{2,-5})$ be  the connected sum of the  knot $\RP1
\subset \RP3$  with the negative torus knot $T_{2,-5}$ in the 3-sphere. Then $J$ is a knot of order 2 in $\RP3$. 

We use additivity to compute the $\tau$ invariants of $J$. First, since $\RP1\subset \RP3$ is a Floer simple knot in an L-space, there is a single $\tau$ invariant in each $\SpinC$ structure and $2\tau_\spinc(\RP1)=d_\spinc(\RP3)-d_{\spinc+\PD[\RP1]}(\RP3)$. The $d$-invariants of $\RP3$ are $\pm\frac{1}{4}$ and $\PD[\RP1]$ is order 2. Thus, $\tau_{\max}(\RP1)=\frac{1}{4}$ and $\tau_{\min}(\RP1)=-\frac{1}{4}$. One can calculate directly that $\tau(T_{2,-5})=-2$. Thus: \begin{align*}
    \tau_{\max}(J)&=\frac{1}{4}-2=-7/4 &\tau_{\min}(J)&=-\frac{1}{4}-2=-9/4.
\end{align*}

On the other hand, we can explicitly construct a rational slice surface for $J$ by taking any slicing surface for \emph{any 2-cable} of $T_{2,-5}$ in the 4-ball and band summing with a slicing disk $D$ for the dual knot. Crucially, for the resulting surface to be a rational slicing surface for $J$, we must use two bands, added along the arc used to form the connected sum. 

Specifically, let $S_2$ be the genus-2 slicing surface for the positive (2,5)-cable of $T_{2,-5}$ constructed by Hom, Lidman and Park in the proof of \cite[Corollary 4.5]{Hom-Lidman-Park}. Note that this surface demonstrates that, even in the $3$-sphere, the rational slice genus isn't determined by the slice genus. 

After attaching two bands and a disk to $S_2$, the resulting rational slicing surface $S$ for $J$ has negative Euler characteristic: $$-\chi(S)=-\chi(S_2)-\chi (D)+2=-\chi(S_2)+1=2g(S_2)=4.$$ Thus, $$2\lVert J\rVert_{\RP3\times [0,1]}+1\leq \frac{-\chi(S)+2}{2}=6/2=3.$$ This is clearly smaller than $\max_{\alpha}|2\tau_{\alpha}(J)|=9/2$.
\bigskip

In fact, Hom, Lidman and Park use the $\nu^+$ concordance invariant, which is closely related to $\tau$, to show that the (usual) slice genus of the $(2,5)$-cable of $T_{2,-5}$ is exactly 2. Thus, it is an example of a braided satellite where the slice genus of the satellite equals the slice genus of the pattern knot. It would be interesting to know whether the slice genus of a cable, or any other braided satellite, knot can drop lower below that of its companion.
    \begin{ques}
    Is the slice genus of a braided satellite knot $P_\beta(K)$ bounded below by the slice genus of its companion; i.e., is $g_4(K)\leq g_4(P_\beta(K))$?
\end{ques}

\end{example}

\subsection{Knots with large PL slice genus}
Using Theorem \ref{cor:rationalPLslice}, we show there are sequences of knots $J_i$ with arbitrarily large PL slice genus by showing that $\tau_{\max}(J_i)-\tau_{\min}(J_i)$ grows arbitrarily large. We need a proposition:
\begin{prop}\label{prop:taudifferenceboundbraidindex}
If $J\subset Y$ is a rationally null-homologous knot, then: $$p(\tau_{\max}(J)-\tau_{\min}(J)-1)+1\leq \tau_{\max}(P_\beta(J))-\tau_{\min}(P_{\beta}(J))$$ where  $P_\beta(J)$ is any braided satellite whose pattern is a $p$-braid that closes to a knot. 
\end{prop}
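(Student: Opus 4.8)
The plan is to read the result off directly from Theorem \ref{thm:BPBounds}. Since the pattern $\beta$ is a $p$-braid whose closure is a \emph{knot}, we have $|\beta|=1$, so the auxiliary class $\Theta$ lives in $\HFa(\#^{0}\SoneStwo)\cong\F$ and may be suppressed, and the error term in Theorem \ref{thm:BPBounds} is $(p-1)+|\beta|-1=p-1$. Fixing a framing $\lambda$ for $J$, the theorem then gives, for every nontrivial Floer class $\alpha\in\HFa(Y)$,
$$\left\lvert\, 2\tau_{\alpha}(P_{\beta}(J,\lambda))-2p\,\tau_{\alpha}(J)-c\,\right\rvert\leq p-1,\qquad c:=(p-1)p\,\selflink(J,\lambda)+\writhe(\beta),$$
where the key point is that $c$ does not depend on $\alpha$. (Since $J$ is rationally null-homologous, so is $P_\beta(J)$, hence all the $\tau$-invariants appearing here are defined.)

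Next I would choose nontrivial classes $\alpha_+,\alpha_-\in\HFa(Y)$ with $\tau_{\alpha_+}(J)=\tau_{\max}(J)$ and $\tau_{\alpha_-}(J)=\tau_{\min}(J)$. Writing $2\tau_{\alpha}(P_{\beta}(J))=2p\,\tau_{\alpha}(J)+c+e(\alpha)$ with $\lvert e(\alpha)\rvert\leq p-1$, and subtracting the instances for $\alpha_+$ and $\alpha_-$, the constant $c$ cancels:
$$2\tau_{\alpha_+}(P_{\beta}(J))-2\tau_{\alpha_-}(P_{\beta}(J))=2p\bigl(\tau_{\max}(J)-\tau_{\min}(J)\bigr)+\bigl(e(\alpha_+)-e(\alpha_-)\bigr)\ \geq\ 2p\bigl(\tau_{\max}(J)-\tau_{\min}(J)\bigr)-2(p-1).$$
Since $\tau_{\alpha_+}(P_{\beta}(J))\leq\tau_{\max}(P_\beta(J))$ and $\tau_{\alpha_-}(P_{\beta}(J))\geq\tau_{\min}(P_\beta(J))$, the left-hand side is at most $2\bigl(\tau_{\max}(P_\beta(J))-\tau_{\min}(P_\beta(J))\bigr)$; dividing by $2$ and rearranging,
$$\tau_{\max}(P_\beta(J))-\tau_{\min}(P_\beta(J))\ \geq\ p\bigl(\tau_{\max}(J)-\tau_{\min}(J)\bigr)-(p-1)\ =\ p\bigl(\tau_{\max}(J)-\tau_{\min}(J)-1\bigr)+1,$$
which is the claim.

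There is no substantive obstacle here: the content is already packaged in Theorem \ref{thm:BPBounds}, and the only things requiring care are the bookkeeping that the shift $c$ is $\alpha$-independent (so it drops out of the breadth) and the observation $|\beta|=1$, which pins the error term at $p-1$. One could add a remark that the estimate captures the expected leading behavior $\tau_{\max}(P_\beta(J))-\tau_{\min}(P_\beta(J))\approx p\bigl(\tau_{\max}(J)-\tau_{\min}(J)\bigr)$ up to a bounded error, which is exactly what is needed to drive the $\tau$-breadth to infinity along a sequence of braided satellites.
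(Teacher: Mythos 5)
Your proof is correct and follows essentially the same route as the paper: specialize Theorem \ref{thm:BPBounds} to the case $|\beta|=1$, apply it twice to classes achieving $\tau_{\max}(J)$ and $\tau_{\min}(J)$, and add the two one-sided bounds so that the framing- and writhe-dependent constant cancels. Your bookkeeping with the $\alpha$-independent shift $c$ and error term $e(\alpha)$ is a slightly cleaner packaging of the same two inequalities the paper writes out and adds.
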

\begin{proof} Theorem \ref{thm:BPBounds} implies for all nontrivial Floer classes $\alpha$ and $\alpha'$ in $\HFa(Y)$:
\begin{align*}
2p\tau_{\alpha}(J)-(p-1)(p\selflink(J,\lambda)+1)+\omega(\beta)&\leq 2\tau_{\alpha}(P_\beta(J,\lambda))\\
-2p\tau_{\alpha'}(J)-(p-1)(1-p\selflink(J,\lambda))-\omega(\beta)&\leq -2\tau_{\alpha'}(P_\beta(J,\lambda))
\end{align*}
where $\lambda$ denotes any choice of framing.
Adding together, for any $\alpha$ and $\alpha'$ we have:
\begin{align*}
    p\tau_{\alpha}(J)-p\tau_{\alpha'}(J)-(p-1)&\leq \tau_{\alpha}(P_\beta(J,\lambda))-\tau_{\alpha'}(P_\beta(J,\lambda))\\&\le \tau_{\max}(P_\beta(J,\lambda))-\tau_{\min}(P_\beta(J,\lambda)).
\end{align*}
Choosing $\alpha$ and $\alpha'$ to maximize the left hand size now gives the result.
\end{proof}

Consequently, if $\tau_{\max}(J)-\tau_{\min}(J)>1$ then the left hand side grows with $p$. In addition, if $\beta$ is a $p$-braid the homology class of $P_\beta(J)$ is $p[J]$ in $H_1(Y;\Z)$. Letting $p_i$ be a sequence of integers congruent to 1 modulo the order of $J$ and $\beta_i$ a $p_i$-braid for each $i$, the satellites $J_i=P_{\beta_i}(J)$ form a sequence of knots \emph{homologous to $J$} whose rational slice genus grows arbitrarily large. Furthermore, letting $m_i$ be a sequence of integers congruent to $m$ modulo the order of $J$ and $\theta_i$ an $m_i$-braid, the satellites $P_{\theta_i}(J)$ generate sequences of examples in other homology classes. 

Corollary \ref{cor:reproof} provides an example of knots for which we can employ this idea.  We restate the corollary here:

\reproof*

\begin{proof}

    Let $K$ be an $L$-space knot of genus $g$, and $\mu$ be its meridian, viewed as a knot in $S^3_{-1}(K)$. Note that the genus of $\mu$ is also $g$, as the two knots have homeomorphic complements. The main result of \cite{Hedden-Levine} is a formula for the filtered homotopy type of the $\Z\oplus\Z$-filtered complex $\CFKinf(S^3_{-1}(K),\mu)$ in terms of a filtered mapping cone of filtered complexes derived from $\CFKinf(K)$.  Specializing that result to the case of the $\Z$-filtered complex $\CFKa(S^3_{-1}(K),\mu)$ from which the $\tau$ invariants are derived, we can easily show that there exist Floer classes $\alpha,\gamma\in \HFa(S^3_{-1}(K))$ for which the  associated  $\tau$ invariants satisfy $\tau_\alpha(\mu)=g-1$ and $\tau_\gamma(\mu)=-g$, hence
    \[ \tau_\alpha(\mu)-\tau_\gamma(\mu)=2g-1\ge 2,\]
    provided that the genus of $K$ is greater than one.  This applies to all $L$-space knots except the right-handed trefoil.  Applying Proposition \ref{prop:taudifferenceboundbraidindex} with braids of  increasing index, we find that for any $i$ there exists a braided satellite of $\mu$, which we denote $J_i=P_{\beta_i}(\mu)$ for whom the difference $\tau_{\max}-\tau_{\min}$ is bigger than $i$.  Together with Theorem \ref{cor:rationalPLslice}, we see that the rational PL slice genus, and hence the PL slice genus, grows linearly with the braid index.

    Rather than go into the details of the filtered mapping cone formula from \cite{Hedden-Levine}, we appeal to direct results from the literature which imply the   existence of classes $\alpha,\gamma$ for which $\tau_\alpha(\mu)-\tau_\gamma(\mu)\ge 2$.  For this, we note that \cite[Propositions 2.6 and 2.7]{Hedden-Berge} together directly imply that \begin{equation}\label{eq:rankdualknot}
    \rank\ \HFKa(S^3_{-1}(K),\mu)= \rank\ \HFa(S^3_{-1}(K))+2= 4g+1. \end{equation}
      Now the top and bottommost non-trivial knot Floer homology groups of $\mu$ are supported in Alexander gradings $\pm g$ by \cite[Theorem 1.2] {GenusBounds}.  By \cite{Lens,NiFibered}, the L-space knot $K\subset S^3$ is fibered. Hence $\mu\subset S^3_{-1}(K)$ is fibered as well, and by \cite[Theorem 1.1]{BVV}, the knot Floer homology of any fibered knot is non-trivial in degree $g-1$.  By conjugation symmetry, \cite[Proposition 3.10]{Knots}, the Floer homology is also non-trivial in degree $-(g-1)$.  Now the Euler characteristic of $\HFKa(S^3_{-1}(K),\mu)$, summed over all Alexander gradings, equals that of $\HFa(S^3_{-1}(K))$ since there is a spectral sequence relating them.  The latter has Euler characteristic equal one by \cite[Proposition 5.1]{HolDiskTwo}.  It follows that the knot Floer homology of $\mu$ must be non-trivial in Alexander grading zero, for if it were not then conjugation symmetry would imply that it is even.  Thus we see that there are at least 5 distinct Alexander gradings supporting non-trivial Floer homology, but Equation \eqref{eq:rankdualknot} implies that the total rank of the differentials in the spectral sequence from $\HFKa(S^3_{-1}(K),\mu)$ to $\HFa(S^3_{-1}(K))$ is one.  Thus at least three  of the five non-trivial Alexander gradings $\pm g, \pm(g-1), 0$ must contain classes which survive the spectral sequence, and  which correspondingly produce $\tau$-invariants with those values.  Any three choices among the five values yield $\tau$-invariants that differ by at least two, verifying the assertion. \end{proof}

     To obtain slice genus bounds in the homology balls  discussed in the introduction, note that if $\Sigma\subset S^3_{-1}(K)\times[0,1]$ is a PL surface with boundary $J_i\subset S^3_{-1}(K)\times \{1\}$, then up to proper isotopy we can assume that $\Sigma$ is disjoint from a neighborhood of a chosen arc connecting the boundary components of $S^3_{-1}(K)\times[0,1]$.  Removing this neighborhood, we obtain a surface embedded in the homology ball $W$ of the same genus, hence any bound for the (rational) PL slice genus of $J_i\subset S^3_{-1}(K)$ is a bound for the minimum genus of any PL surface bounded by $J_i\subset S^3_{-1}(K)\#-S^3_{-1}(K)$ in $W$.

\subsection{Deep slice knots in double branched covers}
 A knot $K\subset \partial X^4$ is slice in a compact 4-manifold $X^4$ if it bounds a properly embedded disk. A slice knot is \emph{shallow slice} if it bounds a disk in $\partial X\times I\subset X$. Otherwise we say $K$ is \emph{deep slice} \cite{Klug-Ruppik}.  

In \cite[Corollary 5.2]{RelativeAdjunction}, the authors show each $\tau$ invariant of a null-homologous knot $K\subset Y$ bounds the genus of a surface embedded in $Y\times [0,1]$ with boundary $K$: $$|\tau_{\alpha}(K)|\leq g_{Y\times [0,1]}(K).$$ Thus, if $K$ is shallow slice \emph{every} $\tau_{\alpha}$ invariant must vanish.

Let $K$ be a slice knot in $S^3=\partial B^4$. Taking the double branched cover of $B^4$ along a slice disk, we obtain a rational homology ball whose boundary $\Sigma(K)$ is the double branched cover of $S^3$ along $K$. The knot $K$ lifts to a null-homologous knot $\widetilde{K}$ in $\Sigma(K)$. Moreover, $\widetilde{K}$ is slice, since the slice disk in $B^4$ lifts to a disk in the rational ball.  We prove  Proposition \ref{prop:DeepSlice} from the introduction:

\DeepSlice*

\begin{proof}   
In \cite{ALevine-branchedcovers}, Levine computed the knot Floer homology of the lifts of 3-bridge knots in their double branched covers. The knots $8_{20}, 10_{129}$ and $10_{140}$ are knots on Levine's list which are simultaneously slice and Khovanov-thin \cite{knotinfo}. We will use the fact that double branched covers of Khovanov-thin knots are Heegaard Floer $L$-spaces to help us calculate their $\tau$ invariants.

Levine records the knot Floer homology in the form of a Poincar\'e polynomial $p_\spinc(q,t)$ where the $q$ exponent records the Maslov grading and the $t$ exponent the Alexander grading. He labels the unique $\SpinC$ structure corresponding to a spin structure by $0$ and the others accordingly, so that conjugate $\SpinC$ structures have opposite sign.

We claim $\tau_{\pm1}(\widetilde{8_{20}})\neq 0$. 
Levine computes the knot Floer homology of $\widetilde{8_{20}}$ in the relevant $\SpinC$ structures as: $$p_{\pm1}(q,t)=q^{7/9}(q^{-1}t^{-1}+1+qt).$$ There is a single generator in each supported Alexander grading and Maslov grading. Since $\Sigma(8_{20})$ is an $L$-space, a single generator survives in the spectral sequence from $\HFKa$ to generate $\HFa(\Sigma(8_{20}),\pm1)$. Moreover, the generators that do not survive differ in Maslov grading by 1, since the differential lowers this  grading by 1. It follows that $\tau_{\pm1}(K)=1$, or $-1$. Thus, the lift of $8_{20}$ is deep slice. 

Analogous calculations show the lifts of $10_{129}$ and $10_{140}$ are also deep slice.
\end{proof}

\subsection{A rational slice-Bennequin Inequality}\label{sec:sliceTB} Let $(Y,\xi)$ be a contact manifold and $\mathcal{K}$ a rationally null-homologous Legendrian. If we restrict to considering rational slice surfaces which are Seifert framed, then we have a well-defined notion of the rational Thurston-Bennequin number. The contact structure determines a framing $\lambda_{\tb}$ for $\mathcal{K}$. The rational Thurston-Bennequin number as defined by Baker and Etnyre in \cite{Baker-Etnyre} is $$\tb_{\Q}(\mathcal{K}):=\selflink(\mathcal{K},\lambda_{\tb})=\frac{1}{p} F\cdot \lambda_{\tb}$$ where $F$ is a rational Seifert surface for $\mathcal{K}$ of degree $p$. This linking number is also given by $\frac{1}{p^2}F\cdot F'$ where $F$ is now considered as a pushed-in rational Seifert surface in $Y\times [0,1]$ and $F'$ is a pushoff determined by the framing $\lambda_{\tb}$, see \cite[Section 1]{Baker-Etnyre}. For a general rational slice surface, $\Sigma\cdot\Sigma'\neq F\cdot F'$. However, if $\Sigma$ is Seifert framed, its self-intersection will agree with that of a push-in of a rational Seifert surface of the same degree over $K$. 

Restricting to Seifert framed rational slice surfaces, $\tb_{\Q}$ is therefore well-defined and we can prove a rational slice Bennequin inequality. To state it, we first recall Baker and Etnyre's 3-dimensional rational Bennequin inequality. In addition to defining $\tb_{\Q}(\mathcal{K})$, discussed above, they define the rational rotation number: $\rot_{\Q,[F]}(\mathcal{K})$ and prove that if $\mathcal{K}$ is a rationally null-homologous Legendrian in a tight contact 3-manifold, then \[\tb_\Q(\mathcal{K})+\rot_{\Q,[F]}(\mathcal{K})\le -\frac{1}{p}\chi(F),\] where $F$ is a rational Seifert surface for $\mathcal{K}$, and $p$ is the degree of the cover $\partial F\to K$ \cite{Baker-Etnyre}.  We now prove the 4-dimensional analogue stated in the introduction, which holds whenever the Ozsv\'ath-Szab\'o contact class is non-trivial:

\rationalslicebennequin*

\begin{proof} The proof is similar in spirit to Theorem 1 of \cite{4Dtight}. Let $K$ denote the underlying topological knot type of $\mathcal{K}$. In \cite{Contact}, the first author defines an invariant $\tau_\xi(K)$ of null-homologous knots. Work of both authors shows this construction naturally extends to rationally null-homologous knots \cite{RelativeAdjunction}. In particular, we have $\tau_\xi(Y,K)=-\tau_{c(\xi)}(-Y,K)$ where $c(\xi)$ denotes the Ozsv\'ath-Szab\'o contact class. 

In \cite[Theorem 1.1]{Li-Wu}, Li and Wu show if the contact class is nontrivial, then \[\tb_\Q(\mathcal{K})+\rot_{\Q,[F]}(\mathcal{K})\le 2\tau_\xi(K)-1\] where $\tb_{\Q}$ and $\rot_{\Q,[F]}$ are both defined with respect to a rational Seifert surface $F$. 

If $\Sigma$ is a Seifert framed rational slice surface representing the same homology class as $F$ in the knot exterior, then $\rot_{\Q,[\Sigma]}(\mathcal{K})=\rot_{\Q,[F]}(\mathcal{K})$. Combined with Corollary \ref{cor:RatLongframed} we have:
\[\tb_\Q(\mathcal{K})+\rot_{\Q,[\Sigma]}(\mathcal{K})\le 2\tau_\xi(Y,K)-1=-2\tau_{c(\xi)}(-Y,K)-1\le -\frac{1}{p}\chi(\Sigma).\]
\end{proof}

\subsection{A 4-dimensional Turaev function} \label{sec:RatGenus}
We end with several questions about the relationship of rational 4-genus to  the  rational extension of the Thurston norm introduced by Turaev. In \cite{Turaev-function}, Turaev introduces a function: $\Theta_Y: \Tors (H_1(Y;\Z))\to \R$. For a torsion homology class $k$ in $H_1(Y;\Z)$ we have: $$\Theta_{Y}(k):=\inf_{\substack{K\subset Y \\ [K]=k}}2||K||_Y.$$ 

\noindent Replacing the rational Seifert genus $\lVert K\rVert_Y$ with the rational slice genus $\lVert K\rVert_{Y\times [0,1]}$ we define a 4-dimensional analogue of Turaev's function: $$\Theta_{Y\times [0,1]}(k):=\inf_{\substack{K\subset Y \\ [K]=k}} 2||K||_{Y\times [0,1]}.$$

\noindent Clearly there is an inequality $\Theta_{Y\times [0,1]}(k)\leq \Theta_{Y}(k)$, since a rational Seifert surface always gives rise to a rational slice surface by pushing in. We find the following question compelling:

\begin{ques}
For which manifolds $Y$ and homology classes $k$ do we have equality: $\Theta_{Y\times [0,1]}(k) = \Theta_{Y}(k)$?
\end{ques}

Theorem \ref{thm:RationalGenusBound} implies that the infimum of $\tau_{\max}-\tau_{\min}$ taken over all knots in the same homology class is a lower bound for the 4-dimensional Turaev function: $$\inf_{\substack{K\subset Y \\ [K]=k}} \{\tau_{\max}(K)-\tau_{\min}(K)\}\leq \Theta_{Y\times [0,1]}(k) +1.$$

\noindent Floer simple knots seem particularly interesting in this context. Ni and Wu showed these knots always achieve $\Theta_Y$ \cite{Ni-Wu}. In light of Theorem \ref{thm:FSK}, we would like to know if Floer simple knots also realize $\Theta_{Y\times [0,1]}$.

\begin{ques}\label{ques:NormMinimize} If a class $k$ in $H_1(Y,\Z)$ can be represented by a Floer simple knot $K$ does $K$ realize $\Theta_{Y\times [0,1]}(k)$? In other words, is: $$\Theta_{Y\times [0,1]}(k)=2\lVert K\rVert_{Y}=2\lVert K\rVert_{Y\times [0,1]}?$$
\end{ques}

\noindent Wu and Yang show that if we instead consider the rational genus relative to the rational longitude in the definition of all three quantities above, then the equalities hold \cite{Wu-Yang}. There is some evidence that the answer to Question \ref{ques:NormMinimize} is positive even with the present stronger notions of rational slice genus.  Indeed, focusing on homology classes with $\Z/2\Z$ coefficients, work of Levine, Ruberman and Strle implies the answer to Question \ref{ques:NormMinimize} is yes for homology classes of order 2 \cite[Theorem 7.2]{Levine-Ruberman-Strle}. Specifically, they prove that in rational homology spheres, $\Theta_{Y\times [0,1]}(k)$ is bounded below by differences of Heegaard Floer $d$-invariants: $$\max_{\spinc\in\SpinC(Y)}\{d(Y,\spinc+k)-d(Y,\spinc)\}\leq \Theta_{Y\times [0,1]}(k)+1.$$ Combining this with Ni and Wu's theorem \cite{Ni-Wu}, they show this bound is sharp when  $Y$ is an $L$-space and the homology class $k$ contains a Floer simple knot. 

The following proposition establishes a relationship between the infimum of the $\tau$ breadth over all knots representing a fixed homology class $k$ and the maximum difference of $d$-invariants:
  
\begin{prop} If $K$ is a knot in a rational homology sphere whose homology class is represented by a Floer simple knot, then
$$\inf_{\substack{K\subset Y \\ [K]=k}} \tau_{\max}(K)-\tau_{\min}(K)\leq \max_{\spinc\in\SpinC(Y)}\{d(Y,\spinc+k)-d(Y,\spinc)\}.$$
\end{prop}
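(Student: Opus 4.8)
The plan is to bound the infimum by its value on a Floer simple representative of $k$, and then to compute the $\tau$-breadth of that representative in terms of the $d$-invariants of $Y$. Write $k$ for the homology class in question and fix a Floer simple knot $K_0\subset Y$ with $[K_0]=k$.

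Since the infimum is taken over all knots representing $k$ and $K_0$ is one such knot,
$$\inf_{\substack{K\subset Y\\ [K]=k}}\bigl(\tau_{\max}(K)-\tau_{\min}(K)\bigr)\ \le\ \tau_{\max}(K_0)-\tau_{\min}(K_0).$$
By the discussion preceding Theorem \ref{thm:FSK}, the hypothesis that $K_0$ is Floer simple gives $\HFKa(Y,K_0)\cong\HFa(Y)$ with every nonzero knot Floer class surviving the spectral sequence to $\HFa(Y)$, so that $\tau_{\max}(K_0)-\tau_{\min}(K_0)=A_{\max}(K_0)-A_{\min}(K_0)$, the breadth of the Alexander grading on $\HFKa(Y,K_0)$. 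Thus it suffices to establish
$$A_{\max}(K_0)-A_{\min}(K_0)\ \le\ \max_{\spinc\in\SpinC(Y)}\{d(Y,\spinc+k)-d(Y,\spinc)\}.$$

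I would prove this, as in Example \ref{ex:SmallGenusCable}, by relating the Alexander gradings of $K_0$ to $d$-invariants. When $Y$ is an $L$-space each $\spinc\in\SpinC(Y)$ carries a single generator $x_\spinc$ of $\HFKa(Y,K_0)$, lying in Maslov grading $d(Y,\spinc)$; since large surgeries on a Floer simple knot are $L$-spaces, the large surgery formula together with the symmetry of knot Floer homology identifies its Alexander grading $\mathbb A(\spinc)$ by $2\mathbb A(\spinc)=d(Y,\spinc)-d(Y,\spinc+k)$. Then $\tau_{\max}(K_0)=\max_\spinc\mathbb A(\spinc)$ and $\tau_{\min}(K_0)=\min_\spinc\mathbb A(\spinc)$; and since $\min_\spinc\{d(Y,\spinc)-d(Y,\spinc+k)\}=-\max_\spinc\{d(Y,\spinc+k)-d(Y,\spinc)\}$, while the conjugation symmetries $d(Y,\bar\spinc)=d(Y,\spinc)$ and $\overline{\spinc+k}=\bar\spinc-k$ yield $\max_\spinc\{d(Y,\spinc)-d(Y,\spinc+k)\}=\max_\spinc\{d(Y,\spinc+k)-d(Y,\spinc)\}$, one gets the identity $A_{\max}(K_0)-A_{\min}(K_0)=\max_\spinc\{d(Y,\spinc+k)-d(Y,\spinc)\}$. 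For a general rational homology sphere one replaces $x_\spinc$ by the filtered complex $\CFKa(Y,K_0,\spinc)$, whose homology is $\HFa(Y,\spinc)$ with no differential in the Alexander spectral sequence, and bounds its extremal Alexander gradings from the large surgery formula (via grading computations in the spirit of \cite{Hedden-Levine}) and conjugation symmetry.

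The main obstacle is this last step in full generality: once $\HFa(Y,\spinc)$ has rank greater than one the Alexander grading need not be concentrated at a single value in each $\SpinC$ structure, so one must verify that the top and bottom Alexander gradings of $\CFKa(Y,K_0,\spinc)$ are still pinned down by $d(Y,\spinc)$ and $d(Y,\spinc\pm k)$. I expect this to follow from the $L$-space property of large surgeries on $K_0$ and the standard duality and conjugation symmetries of knot Floer homology, but the accounting of Maslov and Alexander grading shifts and of the $\SpinC$-structure identifications for rationally null-homologous knots is the delicate point.
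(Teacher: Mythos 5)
Your overall plan tracks the paper's proof closely: both arguments reduce the $\tau$-breadth to an Alexander-grading breadth and then relate that to differences of $d$-invariants. The paper does the first reduction in one line by observing that $A_{\min}(K)\le\tau_{\min}(K)\le\tau_{\max}(K)\le A_{\max}(K)$ holds for \emph{every} rationally null-homologous $K$, hence $\inf_{[K]=k}(\tau_{\max}-\tau_{\min})\le\min_{[K]=k}(A_{\max}-A_{\min})$; you instead bound the infimum by the Floer simple representative $K_0$ and use the fact that equality $\tau_{\max}(K_0)-\tau_{\min}(K_0)=A_{\max}(K_0)-A_{\min}(K_0)$ holds there. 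These two reductions are essentially equivalent (since the Floer simple knot is precisely the Alexander-breadth minimizer), and both are fine.

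The substantive divergence is in the final step. The paper disposes of $\min_{[K]=k}(A_{\max}-A_{\min})=\max_\spinc\{d(Y,\spinc+k)-d(Y,\spinc)\}$ with a citation to Ni--Wu, whereas you attempt to re-derive the identity from scratch via the large surgery formula and conjugation symmetry. Your computation in the $L$-space case is plausible and the symmetry manipulation $\max_\spinc\{d(Y,\spinc)-d(Y,\spinc+k)\}=\max_\spinc\{d(Y,\spinc+k)-d(Y,\spinc)\}$ is correct, but as you yourself note the argument is not complete when $Y$ is merely a rational homology sphere: once $\HFa(Y,\spinc)$ has rank greater than one, $\HFKa(Y,K_0,\spinc)$ need not be concentrated in a single Alexander grading, and the proposed identification $2\mathbb{A}(\spinc)=d(Y,\spinc)-d(Y,\spinc+k)$ no longer pins down the extremal Alexander gradings. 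This is a genuine gap; the proposition is stated for rational homology spheres, not just $L$-spaces, so the $L$-space computation alone does not suffice. The most efficient fix, and the one the paper takes, is simply to invoke Ni--Wu's theorem that Floer simple knots realize the minimum Alexander breadth in their homology class and that this minimum equals $\max_\spinc\{d(Y,\spinc+k)-d(Y,\spinc)\}$, rather than reproving it.
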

\begin{proof}
For any knot we have the following string of inequalities: $$A_{\min}(K)\leq \tau_{\min}(K)\leq \tau_{\max}(K)\leq A_{\max}(K).$$ Thus $\tau_{\max}(K)-\tau_{\min}(K)\leq A_{\max}(K)-A_{\min}(K)$. Minimizing over all knots that are homologous to $K$ we have: 
$$\inf_{\substack{K\subset Y \\ [K]=k}} \tau_{\max}(K)-\tau_{\min}(K)\leq\min_{\substack{K\subset Y \\ [K]=k}} A_{\max}(K)-A_{\min}(K)= \max_{\spinc\in\SpinC(Y)}\{d(Y,\spinc+k)-d(Y,\spinc)\},$$ where the final equality follows from Ni and Wu \cite{Ni-Wu}.
\end{proof}

This motivates our final question:

\begin{ques} If $k$ is represented by a Floer simple knot in an L-space, do we  have equality: $$\max_{\spinc\in\SpinC(Y)}\{d(Y,\spinc+k)-d(Y,\spinc)\}= \inf_{\substack{K\subset Y \\ [K]=k}} \tau_{\max}(K)-\tau_{\min}(K)?$$
Or, are there knots representing $k$ with smaller $\tau_{\max}-\tau_{\min}$ than the Floer simple knot?
\end{ques}

\bibliographystyle{plain}
\bibliography{mybib}
\end{document}